\definecolor{darkred}{rgb}{0.9,0.1,0.1}
\newcommand{\changefont}{
    \fontsize{9}{11}\selectfont
}
\newtheorem{theorem}{Theorem}[section]
\newtheorem{proposition}[theorem]{Proposition}
\newtheorem{remark}[theorem]{Remark}
\newtheorem{lemma}[theorem]{Lemma}
\theoremstyle{definition}
\newcommand{\mb}[1]{\mathbb{#1}}
\newcommand{\mc}[1]{\mathcal{#1}}
\newcommand{\mbf}[1]{\mathbf{#1}}
\newcommand{\ra}{\rightarrow}
\newcommand{\lp}{\langle}
\newcommand{\rp}{\rangle}
\newcommand{\vp}{\varphi}
\newcommand{\ve}{\varepsilon}
\newcommand{\im}{\text{i}}
\DeclareMathOperator{\hess}{Hess}
\DeclareMathOperator{\supp}{supp}
\author[M. Brooks]{Morris Brooks}
\address{Morris Brooks, 
IST Austria,
Am Campus 1,
3400 Klosterneuburg,
Austria}
\email{morris.brooks@ist.ac.at}
\author[G. Di Ges\`u]{Giacomo Di Ges\`u}
\address{ Giacomo Di Ges\`u, TU Vienna, 
Wiedner Hauptstr.\ 8, 
1040 Vienna, Austria.}
\email{giacomo.di.gesu@tuwien.ac.at}
\title{Sharp tunneling estimates for a double-well model in infinite dimension}
\begin{document}

\keywords{Metastability, Semiclassical spectral theory, Spectral gap, Witten Laplacian, Functional Inequalities, Stochastic Partial Differential Equations, Schr\"odinger Operators}

\subjclass[2010]{60J60, 76M45, 81Q10, 35P15, 60H15, 35J10}    
\begin{abstract}
We consider the stochastic quantization of a quartic double-well 
energy functional in the semiclassical regime and derive optimal asymptotics for 
 the exponentially small splitting of the ground state energy.
Our result provides an infinite-dimensional
version of 
some sharp tunneling estimates 
known in finite dimensions for 
 semiclassical Witten Laplacians in degree zero.
 From a stochastic point of view it 
proves that the
$L^2$ spectral gap of the stochastic one-dimensional Allen-Cahn equation in finite volume
satisifies a Kramers-type formula in the limit of vanishing noise. 
We work with finite-dimensional lattice approximations and establish semiclassical estimates which are uniform in the dimension. Our key estimate shows 
that the constant separating the two exponentially small eigenvalues from the rest of the spectrum can be taken independently of the dimension.

\end{abstract}

\maketitle
\section{Introduction}

\noindent
The study of the semiclassical eigenvalue 
splitting due to tunneling effects in multiwell systems 
has a long 
history dating back to the beginnings
 of quantum mechanics. In
 the original setting one deals 
with the
Schr\"odinger operator  in finite dimensions
\begin{equation}\label{schrodinger}   H_h   =   - h^2 \Delta    + V,    
\end{equation}
and with the semiclassical approximation $h\to 0$, describing
the transition from quantum to classical mechanics~\cite{HLNM, Zworski}.
If $V: \mb R^N\to \mb R$ is a confining symmetric double-well potential,
 the difference $E_h^{(1)}-E_h^{(0)}$  between the two smallest eigenvalues  
  of $H_h$ turns out to be exponentially small in $h$, and the
  exponential decay rate of this eigenvalue splitting is determined by the 
  so-called Agmon distance~\cite{Si84, HSI, HLNM}.

Besides the original motivation of the semiclassical approximation 
for quantum systems, the analysis of the eigenvalue splitting for Schr\"odinger operators has been proven to be fruitful in a large number of different situations. 
These include 
problems in 
statistical mechanics, 
following Kac's early ideas~\cite{Kac} on eigenvalue degeneracy as ultimate characteristic of first order phase transitions; and in particular 
the problem of metastability, an example of a dynamical phase transition
\cite{FW,OV, BdH}. 
 Other applications can be found in Differential Topology, more specifically regarding Morse Homology~\cite{Bott}, following the pioneering paper of 
Witten~\cite{Witten}. 

In both types of applications, a multiwell potential $V$ naturally appears.
However, the relevant 
Schr\"odinger operator turns out to be not~\eqref{schrodinger}, but rather of the form 
\begin{equation}  \label{introWitt}
\tilde H_h =   - h^2 \Delta   +      W_h,       \text{ where }    W_h   :=  \tfrac 14|\nabla V|^2 - 
\tfrac h2 \Delta V.  
\end{equation}
This operator is sometimes referred to as Witten Laplacian or, more precisely, Witten Laplacian in degree zero, since it is the restriction on the level of $0$-forms (i.e functions) of 
the full-fledged Witten Laplacian acting on the exterior algebra of differential forms. 
The peculiar form of $\tilde H_h$ might be best understood by observing that, up to a factor $h$, it is unitarily equivalent via ground state transformation to the diffusion operator
\begin{equation}   \label{introGen}   L_h     =   - h \Delta   +    \nabla V \cdot \nabla.
\end{equation}
The latter acts as selfadjoint operator on the weighted space $L^2(e^{-V/h}dx)$ and its  
associated quadratic form is given by  
\begin{equation}   \label{introDir}    \mc E_h [f]=   h   \int_{\mb R^N}   |\nabla f|^2 e^{- V/h} dx     .  
\end{equation}
Moreover $L_h$ is 
the $L^2$-generator of the stochastic Langevin dynamics
\begin{equation}   \label{introLang}      \dot X     =   -  \nabla V (X)     +    \sqrt{2 h} \, \eta     ,    
\end{equation}
where $t\mapsto X(t)$ is a stochastic process in $\mb R^N$ and $\eta$ is an $N$-dimensional white noise in time.  From this stochastic point of view the semiclassical asymptotic $h\to 0$  turns out to be a small noise asymptotic for a reversible diffusion process. 
The procedure which, starting from $V$, constructs the 
essentially 
equivalent objects~\eqref{introWitt}-\eqref{introLang} 
and thus establishes a connection between the formalism of Quantum mechanics and diffusion processes is also called stochastic quantization~\cite{Nelson, AlbeverioHK, Pa, JLMS}.

\

There exists a large literature concerning the semiclassical eigenvalue splitting for the stochastic quantization
~\eqref{introWitt}-\eqref{introLang}, see e.g.~\cite{FW, Davies3, HKS, KoMa, Mathieu, Miclo} and references therein.
Sharpest possible results have been obtained on the asymptotic behaviour of all the exponentially small eigenvalues 
in the case of rather  general multiwell potentials in~\cite{HKN,BGK,Eckhoff,Su, MeSc}.
For example, in the case of a potential $V$ with nondegenerate critical points, exactly two quadratic minima  and growing sufficiently at infinity, the two smallest eigenvalues $E_h^{(0)}, E_h^{(1)}$ of~\eqref{introWitt}
satisfy  $E^{(0)}_h = 0$ and, in the limit $h\to 0$,
\begin{equation}  \label{IntroSG}    E^{(1)}_h =    h A   \exp(  -      B/ h    )      \,  \left(  1  +    o(1)  \right)  .          \end{equation}
Here $A, B >0$ are constants which can be computed explicitly from the potential $V$. Note that $  \lambda_1(h) := \tfrac{E_h^{(1)}}{h}$ is nothing but the $L^2$-spectral
gap of~\eqref{introLang}. 

\

\noindent
{\bf Semiclassical tunneling in infinite dimensions.}
This paper 
concerns the problem of obtaining sharp asymptotics of the type~\eqref{IntroSG} in infinite-dimensional models. 
In the presence of spatially extended systems with infinite degrees of freedom, 
as  occuring
in statistical mechanics and quantum field theory, 
the underlying finite-dimensional manifold of states is replaced by a suitable infinite-dimensional topological space. 
A typical energy functional for a  system described by a field $\xi: 
\Lambda \to \mb R$ then takes the form 
\begin{equation} \label{IntroV}    V(\xi)    =   \int_{\Lambda}  F(\xi(s)) \, ds    +      \tfrac J2   \int_{\Lambda}    |\nabla \xi(s)|^2  \, ds  ,              
\end{equation}
where $\Lambda$ is some region in $\mb R^d$, $J>0$ is a constant and $F:
\mb R \to \mb R$ is a local potential. Analogous functionals appear in topological
applications when considering e.g. infinite-dimensional Riemannian manifolds of loops, as Witten already had in mind (see Section 4 of~\cite{Witten} and e.g.~\cite{Aida03c, Eberle}).

In most situations of interest the energy landscape determined by $V$ is rather complex and in particular $V$ might have several distinct local minima. In analogy to the finite-dimensional case one expects then that 
 exponential eigenvalue splitting occurs for the corresponding stochastic quantization 
 of $V$ in the semiclassical regime.  
 
  Our aim 
 is to put forward a general strategy 
 for extending~\eqref{IntroSG} to infinite-dimensional situations.
 We illustrate this strategy by giving a  complete proof of~\eqref{IntroSG}
 for a special and relatively simple instance of~\eqref{IntroV}, where $\Lambda$ is the one-dimensional torus and $F$ is a symmetric quartic double-well. More specifically we restrict to the case
 \[    F(\xi)     =     \tfrac 14   \xi^4   - \tfrac 12 \xi^2      . \]
 For simplicity we also assume that $J$ is large enough (sepecifically $J> \tfrac{4}{\pi^2}$), so that 
 $V$ admits exactly two minima, given by the constant states $\pm 1$.
 The resulting double-well functional $V$ is sometimes referred to as Ginzburg-Landau or Allen-Cahn energy functional. While infinite-dimensional versions of the Schr\"odinger Operator~\eqref{introWitt} are generally ill-defined, it is well-known that mathematically sound interpretations of ~\eqref{introGen}-\eqref{introLang} can be given in the case of the Ginzburg-Landau functional considered here, see e.g.~\cite{DP}.  In particular 
 the 
 Langevin dynamics~\eqref{introLang}    
 becomes now the semilinear stochastic partial 
 differential equation 
 \begin{equation*}\label{}
   \partial_t u    =   J\, \partial_x^2 u   -   u^3   + u    + \sqrt{2h} \, \eta   ,  
 \end{equation*}
 where $t\mapsto u(t)$ is a stochastic process taking values in a space of functions on $\mb T$ and 
 $\eta$ is a space-time white noise~\cite{FJL}.

Our main result, 
 Theorem~\ref{corollaryEK} below, states that
 for this infinite-dimensional quartic model the asymptotic relation  ~\eqref{IntroSG} holds true with
 $B =\tfrac 14$ (the height of the barrier separating the wells), an explicit prefactor  
 $A$, expressed for notational convenience in terms of $\mu:= \tfrac{\pi^2 J}{4}$,
 and with the $o(1)$ remainder term of order $\mc O(h)$. We emphasize that the difficult part of this result concerns the lower bound. The upper bound 
 follows indeed rather easily from a suitable choice of test functions already introduced in~\cite{DGLP}.

 \
 
\noindent
{\bf Previous results.} Several studies have been devoted to questions of semiclassical analysis in large and infinite dimension. 
Early contributions are~\cite{Sj1, Sj2, DobKol} and  also~\cite{MaMo}
 for semiclassical estimates  with uniform bounds on the dimension, mainly restricted to one-well situations. Further we mention Aida's extensive work on infinite-dimensional Schr\"odinger operators, see~\cite{AidaSug} for an overview, and in particular his paper~\cite{Aida12} on semiclassical tunneling. The latter concerns an infinite-dimensional version of~\eqref{schrodinger} with a renormalized polynomial potential $V$ and shows one-sided estimates on the exponential decay rate in terms of a suitable Agmon estimate.  
 
\

To our knowledge no rigorous results 
 on spectral asymptotics comparable in precision with~\eqref{IntroSG} have been established so far even in simple double-well situations. 
There exist at least three possible methods
to show~\eqref{IntroSG} in finite dimension:
1) the potential-theoretic approach which goes through the computation of hitting times for the stochastic dynamics~\eqref{introLang}
and exploits variational principles for capacities~\cite{BdH}; 
2) the semiclassical approach \`a la Helffer-Klein-Nier  based on 
 WKB expansions  and supersymmetry arguments~\cite{HKN}; 
3) the approach in~\cite{MeSc} exploiting variance decompositions and optimal transport techniques.

\

The first method has been used also in infinite-dimensional settings, both for generalizations of the quartic model over $\mb T$ treated here~\cite{BaBo, Ba, BG} 
and for the corresponding renormalized problem over the two-dimensional torus $\mb T^2$~\cite{BDW}. These papers yield a result for the average time  needed for the stochastic process to pass from one well to the other. A result 
of this type is commonly called Kramers' Law and comparable in precision with~\eqref{IntroSG}. The deduction of sharp eigenvalue asymptotics from  an infinite-dimensional Kramers Law is however 
missing.

A first attempt to generalize approach 2) to the infinite-dimensional model considered here was made in~\cite{DGLP}. 
The authors consider finite-dimensional lattice approximations and 
show that~~\eqref{IntroSG} holds uniformly in the dimension if the two exponentially small eigenvalues are well separated from the rest of the spectrum \cite[Theorem 1.2]{DGLP}. We shall refer to the latter property 
as a rough spectral separation.
As explained in~\cite{DGLP} the usual finite-dimensional estimates 
fail to produce 
a rough spectral separation that holds uniformly in the dimension. This is indeed the major issue when trying to lift tunneling calculations to infinite dimensions. 
A very similar problem arises with approach 3): 
here, a rough spectral separation for each basin of attraction is needed,
and the method based on Lyapounov functionals employed 
in~\cite{MeSc} does not produce uniform estimates.

\

\noindent
{\bf Methods.} The main technical contribution of this paper is to solve the 
above mentioned problem and to show that  the rough spectral separation holds uniformly in the dimension
(see Theorem~\ref{thMAIN} below).

One part of the proof consists in a suitable landscape decomposition and 
a careful choice of reference potentials for the localized problems. These lead via ground state transformations to infinite-dimensional Schrödinger operators replacing the ill-defined
infinite-dimensional version of~\eqref{introWitt}. A second part provides  the 
estimates for the localized problems. The main ingredient here is the NGS bound~\cite{Gross, Si_EQFT}, which is well-known from quantum field theory and has already been used in~\cite{Aida03b}
in a semiclassical context, see also~\cite{FGZ}. 
It provides a quantitative operator bound in case of a singular Schrödinger potential.
The NGS bound follows from (and is indeed equivalent to)
the regularizing effect of a logarithmic Sobolev inequality.
To obtain the necessary hypercontractive properties we exploit the convexity poperties of the reference potentials and the Bakry-\'Emery criterion. 

All our computations are performed for finite-dimensional lattice approximations with uniform estimates in the approximation. 
The analogous computations could, however, also be done
directly in infinite dimension. Our approach has the advantage 
to include also uniform estimates for the lattice approximation. The latter, we believe, has its own interest, both as a physical model and for numerical schemes. 
Moreover our approach substantially reduces the technical prerequisites for the proof: the infinite-dimensional objects enter only in the final limiting procedure.

\

\noindent
{\bf Outline of the paper.} In Section~\ref{sec.Results} we
present our main results:
the crucial estimate is stated in
Theorem~\ref{thMAIN}, which  
provides the rough spectral separation for the lattice approximation of the model.  Theorems~\ref{thKramers} and~\ref{corollaryEK} provide the sharp spectral gap asymptotics respectively for the lattice approximation and the infinite-dimensional model. While the former is a direct consequence of~Theorem~\ref{thMAIN}, for the latter some additional approximation results are needed. These are straigthforward and are discussed in the final Section~\ref{sec.Inf} for completeness.   The core of the paper is given by
the Sections~\ref{SectionStrategy}-\ref{sec.Loc}, where we prove 
Theorem~\ref{thMAIN}. More precisely, in Section~\ref{SectionStrategy} we
 reduce the problem to localized problems around and off the diagonal. 
In Section~\ref{sec.Aux} we recall some abstract auxilary tools, in particular the NGS Bound, in the form needed for the subsequent analysis of the localized  problems. In Section~\ref{sec.Loc} we prove all the necessary local estimates which permit to conclude the proof of Theorem~\ref{thMAIN}.

\section{Results}\label{sec.Results}

\noindent
As in~\cite{DGLP} we fix $\mu>1$ and consider for every dimension $N\in \mb N$
the function $V_N:\mb R^N  \ra  [0, \infty)$ defined by 
\begin{equation}\label{defV}     V_N(x)      :=           \sum_{k=1}^N    \frac 14\big(     x_{k}^2  -1       \big)^2      +          \frac{\mu}{8\sin^2(\frac{\pi}{N})}   \   \sum_{k=1}^N  
   (x_{k} - x_{k+1})^2      ,    
   \end{equation}
where $x_{N+1}:=x_1$. 
We shall refer to $V_N$ as the energy. It is straightforward to show 
 (see e.g.~\cite[Lemma 2.1]{DGLP}) that, due to  the assumption $\mu>1$, for every $N\in \mb N$ 
the function $V_N$ admits exactly three critical
points: the two global minimum points given by the constant states $I_+ =(1,\dots,1 )$
and $I_- =(-1,\dots,-1)$ and the critical point of index one given by the origin $O=(0, \dots, 0)$. One might think of $\tfrac 1 N  V_N $ as a lattice approximation of the double-well functional $V : H^1(\mb T) \to [0,\infty)$ defined by 
\[    V (\xi)    :=    \int_{\mb T}   
  \frac 14\big(    \xi^2(s)  -1       \big)^2 ds      +        
 \frac{\mu}{8 \pi^2} \int_{\mb T}   |\xi'(s)|^2 ds  , \]
 where $\mb T$ is the one-dimensional torus $\mb R/\mb Z$. 

\

\noindent
Let $C_b^\infty(\mb R^N)$ be the space of smooth real functions on $\mb R^N$ which are bounded together with all their derivatives. 
 For each $N\in \mb N$ and $h>0$ we denote by  $\mc E_{h, N}: C_b^\infty(\mb R^N)\to \mb R$
the quadratic form defined by  
\[     \mc E_{h,N} [f]       :  =      hN   \,   \int_{\mb R^N}  |\nabla f (x)|^2  \, 
e^{- \frac{V_N (x)}{hN}} \, dx     ,     \]
define for each finite-dimensional linear subspace $S\subset C_b^\infty(\mb R^N)$ 
\begin{equation}\label{definitionkappa}  
  \kappa^{(S)}_{h,N}     :=  
\sup \left\{  \mc E_{h,N} [f] : f \in S \text{ and } \int_{\mb R^N} f^2  
 e^{- \frac{V_N (x)}{hN}} dx    =   1   
   \right\}    \   , 
   \end{equation}
and finally consider for each $j\in \mb N_0$  
\begin{equation}\label{minmaxdef}  
   \lambda^{(j)}_{h, N}   \  :=  \  
    \inf \left\{      \kappa^{(S)}_{h,N} :   S\subset  C_b^\infty(\mb R^N)  \text{ and }   \dim S = j+1     \right\}       .       
    \end{equation}
 It follows from standard arguments that for each $h>0, N\in \mb N$ the set 
  $\{\lambda^{(j)}_{h, N} \}_{j\in \mb N_0}$ gives counting multiplicities both the spectrum of the
 closure in $L^2(e^{-V_N/hN}dx)$ of the diffusion-type differential operator 
 \begin{equation*} \label{defgenerator}    f \mapsto    - hN \Delta f  +     \nabla V_N \cdot \nabla f ,    \   \   \       f\in C_b^\infty(\mb R^N)    , 
 \end{equation*}
 and the spectrum of the
 closure in $L^2(dx)$ of the Schr\"odinger-type differential operator
 \begin{equation} \label{defschrodinger}      f\mapsto   - hN    \Delta  f  +   ( \tfrac{1}{4hN} |\nabla V_N|^2   - \tfrac 12 \Delta V_N)  f       ,   \  \  \         f\in C_c^\infty(\mb R^N)    , 
 \end{equation}
 where $C_c^\infty(\mb R^N)$ is the space of smooth compactly supported real functions on $\mb R^N$. The differential operator~\eqref{defschrodinger} is also known as (the restricion on $0$-forms of) the Witten Laplacian corresponding to the energy $V_N$~\cite{Witten}. 

\

\noindent
Note that   $ \lambda^{(0)}_{h, N} =0$
and $ \lambda^{(1)}_{h, N} >0$
for each $h>0$ and $N\in \mb N$.
Moreover, considering a suitable test function for the upper bound and rough perturbation arguments for the lower bound, one can show that 
$\lambda^{(1)}_{h, N}$ is exponentially small in the regime $h\ll 1$, uniformly in the dimension $N$. More precisely \cite{DGLP} there exist
$C, C'  >0$ such that for all $h\in (0, 1]$ we have 
\[       e^{- \tfrac{C}{h}}      \leq      \lambda^{(1)}_{h, N}         \   \leq    \     e^{- \tfrac {C'}{h}}       .   \]

\noindent
The main technical result of the present paper is to prove that $\lambda^{(2)}_{h, N} $ is bounded from below, uniformly both in $h$ and $N$. 
\begin{theorem}\label{thMAIN}
There exist constants $C, h_0>0$ such that for every $h\in(0, h_0]$ and every $N\in \mb N$  
we have
\[   \lambda^{(2)}_{h,N}    \   \geq   \   C     .   \]
\end{theorem}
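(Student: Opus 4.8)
The plan is to prove Theorem~\ref{thMAIN} by an IMS‑type localization of the Dirichlet form $\mc E_{h,N}$ which is \emph{cheap enough to survive the limit $N\to\infty$}, reduced to two local spectral estimates that are made uniform in $N$ through convex reference potentials, the Bakry--\'Emery criterion and the NGS bound. Write $m(x):=\tfrac1N\sum_{k=1}^N x_k$ and $\|\cdot\|$ for the norm of $L^2(e^{-V_N/hN}dx)$. Choose a smooth partition of unity $\chi_+^2+\chi_0^2+\chi_-^2\equiv1$ in which each $\chi_a$ depends on $x$ only through $m$, with $\chi_+\equiv1$ on $\{m\ge\tfrac12\}$, $\chi_+=0$ on $\{m\le\tfrac14\}$, $\chi_-(x)=\chi_+(-x)$, and $\supp\chi_0\subset\{|m|<\tfrac12\}$. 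Since $\chi_a$ is a function of $m$ alone, $|\nabla\chi_a|^2=|\chi_a'(m)|^2|\nabla m|^2=\tfrac1N|\chi_a'(m)|^2$, so $\sum_a|\nabla\chi_a|^2\le L^2/N$ with $L$ independent of $N$, and the IMS localization formula gives
\[
\mc E_{h,N}[f]\ \ge\ \mc E_{h,N}[\chi_+f]+\mc E_{h,N}[\chi_0f]+\mc E_{h,N}[\chi_-f]\ -\ hN\cdot\tfrac{L^2}{N}\,\|f\|^2 ,
\]
the error being only $\mc O(h)$, uniformly in $N$ — this is the structural gain over \cite{DGLP}, where cut‑offs in all $N$ coordinates were used and the localization cost was of order $hN$. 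It then suffices to prove: (a) $\mc E_{h,N}[g]\ge C_1\big(\|g\|^2-|\lp g,\rho_+\rp|^2\big)$ whenever $\supp g\subset\{m>\tfrac14\}$ (and symmetrically over $\{m<-\tfrac14\}$) for a suitable normalized $\rho_+$ concentrated near $I_+$; and (b) $\mc E_{h,N}[g]\ge C_0\|g\|^2$ whenever $\supp g\subset\{|m|<\tfrac12\}$; with $C_0,C_1>0$ independent of $h\in(0,h_0]$ and of $N$. Granting this, apply (a)--(b) to $\chi_\pm f$ and $\chi_0f$, take $f$ orthogonal in $L^2(e^{-V_N/hN}dx)$ to the two functions $\chi_+\rho_+$ and $\chi_-\rho_-$, and use $\sum_a\|\chi_af\|^2=\|f\|^2$: this yields $\mc E_{h,N}[f]\ge(\min(C_0,C_1)-\mc O(h))\|f\|^2$ on a subspace of codimension $2$, hence $\lambda^{(2)}_{h,N}\ge\tfrac12\min(C_0,C_1)$ for $h\le h_0$.

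For (a) — the well $\{m>\tfrac14\}$, whose only critical point of $V_N$ is the nondegenerate minimum $I_+$, lying ``on the diagonal'' — I would not work with $V_N$ directly but with a \emph{uniformly convex} reference potential $\widetilde V_{N,+}$, $\hess\widetilde V_{N,+}\ge\rho_0\,\mathrm{Id}$ with $\rho_0>0$ independent of $N$, coinciding with $V_N$ on a fixed neighbourhood of the diagonal through $I_+$ (in particular wherever $e^{-V_N/hN}$ carries non‑negligible mass inside $\{m>\tfrac14\}$) and still growing like $V_N$ at infinity: keep the harmonic coupling term and replace each single‑site double well $\tfrac14(x_k^2-1)^2$ by a function equal to it near $x_k=1$ and near $x_k=-\infty$, convexified (pushed up by $\mc O(1)$) in between, so that $V_N-\widetilde V_{N,+}$ is a \emph{bounded} function supported off the well. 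By Bakry--\'Emery the reference measure $\widetilde\gamma_{N,+}\propto e^{-\widetilde V_{N,+}/hN}$ satisfies a logarithmic Sobolev inequality with constant $\mc O(hN)$, i.e.\ a dimension‑free one in the normalization of $\mc E_{h,N}$; equivalently the reference diffusion operator has spectral gap $\ge\rho_0$ above its one‑dimensional ground state, uniformly in $h$ and $N$, and its semigroup is hypercontractive with a hypercontractivity time independent of $h$ and $N$. The ground state transformation with respect to $\widetilde V_{N,+}$ turns the localized operator into the reference generator plus multiplication by the singular potential $Q_+:=\tfrac1{4hN}\big(|\nabla V_N|^2-|\nabla\widetilde V_{N,+}|^2\big)-\tfrac12\,\Delta(V_N-\widetilde V_{N,+})$, the difference of the two Witten potentials, and the NGS bound says precisely that a hypercontractive generator perturbed by a potential controlled in suitable exponential moments (not in $L^\infty$) keeps its gap up to an error governed by those moments. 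The two required inputs are uniform in $N$: integration by parts gives $\mb E_{\widetilde\gamma_{N,+}}[Q_+]=\tfrac1{4hN}\mb E_{\widetilde\gamma_{N,+}}\big[|\nabla(V_N-\widetilde V_{N,+})|^2\big]$, which is $o(1)$ as $h\to0$ since $\nabla(V_N-\widetilde V_{N,+})$ lives on the very improbable event that a coordinate leaves the well; and $\mb E_{\widetilde\gamma_{N,+}}[e^{-sQ_+}]$ stays bounded for $s$ below a fixed threshold exceeding the hypercontractivity time, because $e^{-sQ_+}$ is large only on the ``instanton‑cost'' sets, which $\widetilde\gamma_{N,+}$ charges by an exponentially small amount — for $N$ small this is a Laplace comparison of the partition functions of $V_N$ and $\widetilde V_{N,+}$ over $\{m>\tfrac14\}$, equal to leading order since both have there a single nondegenerate minimum with the same Hessian, while for $N$ large such an event additionally costs $\Theta(N)$ through the coupling term, whose normalization sits at the $N^2$ scale, hence is super‑exponentially suppressed. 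This yields (a) with a uniform $C_1>0$.

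Estimate (b) — the ``off‑diagonal'' transition regime $\{|m|<\tfrac12\}$ — is handled by the same mechanism. First, $V_N\ge c_0N$ there for a fixed $c_0>0$: no near‑constant $\pm1$ profile is available, and any profile performing a full excursion (a domain wall) still costs $\Theta(N)$ because the gradient penalty sits at the $N^2$ scale. Since this region contains no minimum of $V_N$ — only the index‑one saddle $O$ — one may choose, for the localized problem, convex reference potentials whose minimum lies on or beyond the boundary $\{|m|=\tfrac12\}$; the Bakry--\'Emery criterion together with the Dirichlet condition then forces the bottom of the reference local spectrum to be $\ge C_0'>0$ uniformly in $N$, after which the (again uniformly exponentially controlled, since $V_N$ is large there) discrepancy is absorbed by the NGS bound as above. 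The single negative Hessian direction at $O$ is treated separately, contributing only a bounded amount, while the $N-1$ transverse modes — each carrying curvature at least of the order of the coupling strength — are integrated out with uniform control; this is exactly the step at which a naive harmonic approximation, with its $(N-1)$‑fold zero‑point energy and a Laplacian $\Delta V_N$ of order $N^3$, would diverge and for which the NGS bound is indispensable. This gives (b) with a uniform $C_0>0$ and completes the proof.

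The crux — and the whole point of the paper — is the uniformity in $N$ of $C_0$ and $C_1$: every finite‑dimensional device (harmonic approximation, Lyapunov functions, Holley--Stroock perturbation, \dots) yields constants deteriorating as $N\to\infty$. The resolution is to replace $V_N$ by convex reference potentials so that Bakry--\'Emery delivers a dimension‑free log‑Sobolev inequality, and then to use the NGS bound to reinstate the true singular potential while paying only for its exponential moments; the technical heart, which I expect to be the main obstacle, is to verify those exponential‑moment bounds — in particular that the $N$ single‑site contributions combine to an $\mc O(1)$ quantity through partition‑function ratios and the super‑exponential suppression coming from the $N^2$‑scaled coupling, rather than accumulating with $N$.
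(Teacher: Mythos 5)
You have correctly identified the central toolbox the paper relies on — replace $V_N$ by a uniformly convex reference potential, apply Bakry--\'Emery to get a log-Sobolev constant that is dimension-free after the $hN$ normalization, and then invoke the NGS bound to reinstate the singular difference of Witten potentials — and you correctly observe that this is exactly what buys uniformity in $N$. Your IMS identity with a cutoff depending on $\bar x$ alone, giving an $\mathcal O(h)$ localization error with $N$-independent constant, is also a genuine improvement over cutting in all $N$ variables. However, your decomposition differs from the paper's in a way that is not just cosmetic. The paper's \emph{first} cut is in the transverse variable $\tfrac 1N\sum_k(x_k-\bar x)^2$, i.e.\ the distance to the diagonal, \emph{not} in $\bar x$. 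This is crucial: for every fixed $\bar x=\xi$ the Hessian of the transverse energy $F_N^{(\xi)}$ in~\eqref{discretePoincare} satisfies $\hess F_N^{(\xi)}\ge \mu-1>0$, uniformly in $\xi$, $y$ and $N$, so the off-diagonal part (Proposition~\ref{MainPropAway}) is a single clean NGS estimate for a uniformly log-concave conditional measure, with the partition-function ratio of Proposition~\ref{propcompofNGSconstant} supplying the $h^{-1}$ gain. Only \emph{after} this reduction — inside the thin strip $\mc S_N(R_0)$, where the transverse fluctuation is \emph{a priori bounded} by $R_0$ — does the paper cut in $\bar x$. The remaining three sub-estimates are then genuinely quasi-one-dimensional, and are in fact more elementary than you propose: around the minima, $V_N$ is itself uniformly convex on the sectors $\Omega^{\min}_N(R_0,r)$ (Lemma~\ref{convexity}), so the convex modification $V^\pm_{N,r}$ of Remark~\ref{convexremark} agrees with $V_N$ on the whole support of the test functions and plain Bakry--\'Emery Poincar\'e~\eqref{PoincBE} does the job with no NGS bound at all; around the saddle, Proposition~\ref{propsaddle} is a one-line ground-state transformation along the diagonal direction using that $-\Delta V_N>0$ there; NGS is needed only for the inflection annulus (Proposition~\ref{propinflection}) and the off-diagonal region.

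Because you cut only in $\bar x$, each of your three regions contains profiles at arbitrary transverse distance from the diagonal, and this is where your proof has a genuine gap. Your reference potential $\widetilde V_{N,+}$ is obtained by convexifying each single-site double well separately; the difference $V_N-\widetilde V_{N,+}$ then lives on the set where \emph{some} coordinate leaves the well, and the resulting $Q_+$ sums $N$ independent single-site contributions. Under $\widetilde\gamma_{N,+}$ the probability of one coordinate leaving is exponentially small in $1/h$, but there are $N$ of them: your exponential-moment bound $\mb E_{\widetilde\gamma_{N,+}}[e^{-sQ_+}]<\infty$ with a threshold $s$ \emph{uniform in $N$} requires the coupling term to provide enough cooperative suppression to beat the combinatorics, precisely the ``$N^2$-scaled gradient penalty saves the day'' heuristic you gesture at but do not prove. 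This is a nontrivial estimate, and the paper simply never has to face it, because the thin strip $\mc S_N(R_0)$ constrains the transverse fluctuations before any convex modification is invoked. Your estimate~(b) is in even worse shape: ``a convex reference potential with minimum on or beyond $\{|\bar x|=\tfrac12\}$'' is not specified, and it is not true that Bakry--\'Emery plus a Dirichlet condition automatically produces an $N$-uniform gap from such a construction — compare with the very concrete choice $U_N = \tfrac14\sum_k x_k^4 + Q_N$ in the proof of Proposition~\ref{propinflection}, where the explicit computation of $G_N$ and the Gaussian fourth-moment bound of~\cite[Lemma 2.2]{DGLP} are what close the argument. So: right ingredients, right goal, but a structurally harder decomposition and the two local estimates on which everything rests are only sketched, at the very points where the real difficulty lies.
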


\noindent
Thus, in the semiclassical regime $h\to 0$, the spectrum separates sharply 
into a ``low-lying spectrum''  consisting of the two exponentially close eigenvalues $ \lambda^{(0)}_{h,N},  \lambda^{(1)}_{h,N}$
and the rest of the spectrum, which is uniformly bounded from below by a
strictly positive constant. 
A statement of this type is well known to hold for a general class of  energies $V$ when the dimension $N$ is fixed. In general one finds indeed a cluster $m_0$ 
of exponentially small eigenvalues, where $m_0$ is the number of local minima of $V$; then there is 
a large gap, with  the rest of the spectrum being bounded away from zero, uniformly in $h$. However the usual arguments,
based on suitable Harmonic approximations of the 
Schr\"odinger operator~\eqref{defschrodinger}~\cite{Si83, HSI, CFKS, KRI}, 
do not permit to get bounds uniform in $N$ 
when applied to the sequence of quartic energies $V_N$ defined in~\eqref{defV}.

\

\noindent
As a corollary of Theorem~\ref{thMAIN} we are able to compute the precise asymptotic behaviour in the limit $h\to 0$ of the spectral gap $ \lambda^{(1)}_{h, N} $, with uniform control in the dimension $N$: 
The 
exponential decay rate of $ \lambda^{(1)}_{h, N} $ equals the height 
of the barrier $\tfrac 1N V_N(0) - \tfrac 1N V_N(I_+) = \tfrac 14$
separating the two wells, 
with an explicit $h$-independent pre-exponential factor given by 
\begin{equation}\label{prefactor}
p(N)   \ :=    \ \frac 1\pi   \  \left|\frac{\det\hess V_N(I_+)}{\det\hess V_N(0)}\right|^{\frac12}. 
 \end{equation}
 More precisely we obtain as immediate application of 
 Theorem~\ref{thMAIN}  and ~\cite[Theorem 1.2]{DGLP}
 the following uniform Kramers Law, which, together with its infinite-dimensional version stated below, is the main result of our paper.  
\begin{theorem} \label{thKramers} Let $p(N)$ be given by~\eqref{prefactor}. Then there exist $h_0, C>0$ such that the error 
term  $(h, N)  \mapsto \epsilon (h, N)$ defined for $h>0$ and $N\in \mb N$ by 
 \begin{equation*}
 \lambda^{(1)}_{h, N} 
  =   
    p(N)    
e^{-\frac{1}{4h}}
\  \big(    1   +       \epsilon(h,N)      \big)          ,
\end{equation*} satisfies for all $h\in (0, h_0]$ and $N\in \mb N$
the bound    $   |\epsilon(h,N)|       \leq        C    h $.  
\end{theorem}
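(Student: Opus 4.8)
The proof is short: it consists in feeding the output of Theorem~\ref{thMAIN} into the conditional statement \cite[Theorem 1.2]{DGLP}, for which Theorem~\ref{thMAIN} supplies exactly the missing hypothesis. First I would recall precisely what \cite[Theorem 1.2]{DGLP} asserts. For the lattice energies $V_N$ of \eqref{defV} it establishes the Eyring--Kramers asymptotics $\lambda^{(1)}_{h,N}=p(N)e^{-1/(4h)}(1+\mathcal O(h))$ \emph{conditionally on a uniform rough spectral separation}: namely, it shows that if there are constants $c,h_0>0$ such that $\lambda^{(2)}_{h,N}\ge c$ for all $h\in(0,h_0]$ and all $N\in\mb N$, then the conclusion of Theorem~\ref{thKramers} holds with a constant $C=C(c,h_0,\mu)$. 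The upper bound there is produced by inserting into the Rayleigh quotient \eqref{definitionkappa} the explicit quasimode built from the harmonic approximation of $V_N$ at $I_\pm$; the prefactor $p(N)$ is precisely the resulting finite-dimensional Eyring--Kramers constant. The lower bound comes from a quasimode/interaction-matrix comparison in which $\lambda^{(1)}_{h,N}$ is matched to the nonzero eigenvalue of an explicit $2\times 2$ matrix, the comparison error being controlled by the relative gap $\lambda^{(1)}_{h,N}/\lambda^{(2)}_{h,N}$, hence negligible once the denominator is bounded below uniformly.

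Second, I would observe that Theorem~\ref{thMAIN} supplies exactly this missing hypothesis: it gives constants $C_*,h_0>0$ with $\lambda^{(2)}_{h,N}\ge C_*$ for every $h\in(0,h_0]$ and every $N\in\mb N$. Substituting $c:=C_*$ in the conditional statement above immediately yields $\lambda^{(1)}_{h,N}=p(N)e^{-1/(4h)}(1+\epsilon(h,N))$ with $|\epsilon(h,N)|\le Ch$ uniformly in $N$, which is the claim.

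The only point needing verification is that the dimension enters \cite[Theorem 1.2]{DGLP} through nothing but the separation constant, i.e. that the normalization and residual bounds for the quasimodes, the entries and the nonzero eigenvalue of the $2\times 2$ interaction matrix, and the Laplace-type evaluation of the underlying Gaussian integrals are all uniform in $N$. These estimates were already carried out uniformly in \cite{DGLP}; the sole obstruction there was precisely the lack of a uniform rough spectral separation, which Theorem~\ref{thMAIN} now removes. Accordingly I expect no genuinely new difficulty here: the mathematical content of this theorem is entirely contained in Theorem~\ref{thMAIN}, and what remains is bookkeeping, so the hard part has already been done.
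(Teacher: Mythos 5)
Your proposal is correct and matches the paper exactly: the paper itself presents Theorem~\ref{thKramers} as an ``immediate application'' of Theorem~\ref{thMAIN} together with the conditional uniform Eyring--Kramers formula of \cite[Theorem 1.2]{DGLP}, with no further argument. Your description of the conditional hypothesis (uniform lower bound on $\lambda^{(2)}_{h,N}$), how Theorem~\ref{thMAIN} supplies it, and the caveat that the remaining quasimode and Laplace estimates in \cite{DGLP} are already uniform in $N$, is precisely the intended reasoning.
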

\noindent
As already noted in~\cite{St} for $N\to\infty$ the prefactor converges: 
 \begin{equation}\label{prefactorconv}p(N) \to  \tfrac{ \sinh (\pi \sqrt{2\mu^{-1}})}{ \pi  \sin (\pi \sqrt{\mu^{-1}})} .   
 \end{equation}
Since our bounds are uniform in $N$ we can thus pass to the limit 
$N\to \infty$ and get the corresponding infinite-dimensional version 
of Theorem~\ref{thKramers}.
To formulate the latter, we shall introduce the following notation.  
We fix a mass $m>0$ and consider the trace class operator 
$\mc G:L^2(\mb T) \to L^2(\mb T)$ defined as the inverse of the 
selfadjoint operator  on $L^2(\mb T)$ given by 
$H^2(\mb T) \ni x\mapsto mx - \tfrac{\mu}{(2\pi)^2}x''$. We 
denote by $\gamma_{h}$ the centered Gaussian measure on $ L^2(\mb T)$ with covariance operator  $h \mc G$ and define 
$U:L^2(\mb T) \to \mb R \cup\{+\infty\}$ by 
\begin{equation}\label{defUc}  U(\xi)    :=    \int_{\mb T}   
  \big(  \frac 14  \xi^4(s)  -    \frac{1}{2}(m+1)   \xi^2(s)       \big) ds    +    \frac 14 .  
  \end{equation}
  Finally we define for every $h>0$
\[    \lambda^{(1)}_h   :=   \inf_{F}
\Big\{    h  \int_{L^2(\mb T))}  \|D F \|_{L^2(\mb T)}^2  \, 
e^{- \frac{U }{h}} \, d\gamma_h   \Big\}    , \]
where the infimum is taken over all  
$F\in \mc FC_b^\infty(L^2(\mb T))$ satisfying
the constraints $ \int_{L^2(\mb T))}  | F |^2  \, 
e^{- \frac{U }{h}} \, d\gamma_h =1 $
and   $\int_{L^2(\mb T))}   F   \, 
e^{- \frac{U }{h}} \, d\gamma_h = 0$ and where
$DF$ is the gradient of $F$ . Here 
$F\in \mc FC_b^\infty(L^2(\mb T))$ means that 
$F$ is a cylindrical test function on $L^2(\mb T)$, i.e. there exist
$n\in \mb N$, $y_1, \dots, y_n\in L^2(\mb T)$ and $f\in C_b^\infty(\mb R^n)$ such that $F(x) =   f( \lp x, y_1\rp, \dots, \lp x, y_n\rp)$ for every $x\in L^2(\mb T)$.

\begin{theorem}\label{corollaryEK}  There exist $h_0, C>0$ such that the error 
term  $h  \mapsto \epsilon (h)$ defined for $h>0$  by 
 \begin{equation*}
 \lambda^{(1)}_{h} 
  =   
    \tfrac{ \sinh (\pi \sqrt{2\mu^{-1}})}{ \pi  \sin (\pi \sqrt{\mu^{-1}})}  \    
e^{-\frac{1}{4h}}
\  \big(    1   +       \epsilon(h)      \big)       ,
\end{equation*} satisfies for all $h\in (0, h_0]$ 
the bound    $   |\epsilon(h)|       \leq        C    h $.  
\end{theorem}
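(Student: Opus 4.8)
\textbf{Proof proposal for Theorem~\ref{corollaryEK}.}
The plan is to deduce the infinite-dimensional asymptotics from the finite-dimensional Kramers Law in Theorem~\ref{thKramers} by a careful passage to the limit $N\to\infty$, exploiting the fact that the error bound $|\epsilon(h,N)|\le Ch$ there is uniform in $N$. The first and conceptually cleanest step is the convergence of the prefactor: by~\eqref{prefactorconv} (see also~\cite{St}) we already know $p(N)\to \tfrac{\sinh(\pi\sqrt{2\mu^{-1}})}{\pi\sin(\pi\sqrt{\mu^{-1}})}$, so if we can show $\lambda^{(1)}_{h,N}\to \lambda^{(1)}_h$ as $N\to\infty$ for each fixed $h$, then taking the limit in the identity of Theorem~\ref{thKramers} immediately yields the claimed expression for $\lambda^{(1)}_h$ together with the uniform bound $|\epsilon(h)|\le Ch$ inherited from the uniform-in-$N$ bound. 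So the whole theorem reduces to one approximation statement: the $L^2$-spectral gaps of the lattice models converge to the spectral gap of the infinite-dimensional model.

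To make this precise I would first identify the correct ambient space and Dirichlet form. The natural move is to change variables via the discrete Gaussian: write $V_N(x) = \tfrac12\langle x, A_N x\rangle_{\mb R^N} + (\text{quartic local part})$ up to the affine normalization, where $A_N$ is the discretized mass-plus-Laplacian operator, and rewrite $e^{-V_N/(hN)}dx$ as $e^{-U_N/h}\,d\gamma_{h,N}$ with $\gamma_{h,N}$ a finite-dimensional Gaussian and $U_N$ a bounded-below quartic functional. Under the standard embedding $\mb R^N\hookrightarrow L^2(\mb T)$ (piecewise-constant or trigonometric interpolation), one checks that $\gamma_{h,N}\to\gamma_h$ weakly and that $U_N$ $\Gamma$-converges (in fact converges in a strong enough topology on cylindrical functions) to $U$ from~\eqref{defUc}; the quadratic forms $\mc E_{h,N}$ restricted to cylindrical test functions converge to the limiting Dirichlet form. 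Then I would invoke a Mosco-convergence / Dirichlet-form convergence criterion (as in the theory of convergence of symmetric Dirichlet forms) to conclude convergence of the associated self-adjoint generators in the strong resolvent sense, hence convergence of the discrete spectra below the essential spectrum. Because $\gamma_h\mc G$ is trace class and $U$ is bounded below with controlled growth, the limiting operator has compact resolvent on the orthogonal complement of constants, so the gap is an isolated eigenvalue and $\lambda^{(1)}_{h,N}\to\lambda^{(1)}_h$ follows. This is the content alluded to as ``straightforward approximation results'' for Section~\ref{sec.Inf}.

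The main obstacle — and the step requiring genuine care rather than routine bookkeeping — is controlling the nonlinearity uniformly enough to get the \emph{two-sided} spectral convergence, i.e. not just $\liminf$ but $\limsup$ of the gap. For the lower bound on $\lambda^{(1)}_h$ in terms of $\liminf \lambda^{(1)}_{h,N}$ one needs that a near-optimal cylindrical test function $F$ for the infinite-dimensional variational problem can be approximated by functions on the lattice without losing energy, which requires uniform integrability of $\|DF\|^2 e^{-U/h}$ against $\gamma_{h,N}$ — here the quartic term $\tfrac14\xi^4$ in $U$ is actually helpful since it is confining, but one must make sure the discretized quartic $\tfrac{1}{4N}\sum x_k^4$ does not degenerate; this is where a uniform moment bound $\int e^{c\|\cdot\|_{L^2}^2}\,d\gamma_{h,N}\le C$ (valid because the covariances $h\mc G_N$ are uniformly bounded in trace norm) does the work. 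For the reverse inequality one uses the eigenfunctions of the lattice problem as (nearly) cylindrical competitors in the limit, again needing the uniform trace-class control to ensure tightness and to rule out mass escaping to infinity. Once these uniform estimates are in place the convergence is forced, and combined with~\eqref{prefactorconv} and Theorem~\ref{thKramers} the proof is complete.
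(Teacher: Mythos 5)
Your high-level plan — let $N\to\infty$ in the uniform Kramers law of Theorem~\ref{thKramers} — has the right flavor, but the way you propose to close the argument has a genuine gap, and it is also heavier than what the paper actually does.

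The gap is the inference from Mosco convergence of the quadratic forms to convergence of the spectral gaps. Mosco convergence yields strong resolvent convergence, but strong resolvent convergence does \emph{not} by itself give convergence of isolated eigenvalues: it gives spectral inclusion in one direction (every point of $\sigma(A)$ is a limit of points of $\sigma(A_N)$) but permits spectral pollution in the other, so one cannot conclude $\lambda^{(1)}_{h,N}\to\lambda^{(1)}_h$ without an additional compactness or norm-resolvent-type argument. Your remark that ``the limiting operator has compact resolvent on the orthogonal complement of constants'' would need a separate proof for the quartic perturbation of the Ornstein--Uhlenbeck operator in infinite dimension, and even granted, the passage from strong resolvent convergence of the $N$-dimensional operators (acting on spaces of varying dimension, under embeddings) to two-sided eigenvalue convergence is not routine. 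As written, the reduction ``the whole theorem reduces to one approximation statement: $\lambda^{(1)}_{h,N}\to\lambda^{(1)}_h$'' therefore rests on an unproved claim.

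The paper avoids the issue by never proving spectral convergence at all; it argues purely at the level of the Rayleigh quotient, in each direction separately. For the lower bound (Proposition~\ref{propLBinfty}), one fixes an arbitrary competitor $F\in\mc FC_b^\infty(L^2(\mb T))$ for the infinite-dimensional variational problem, discretizes it to $\tilde f_N=F\circ\eta^{(N)}$, and shows via the explicit covariance computations of Lemma~\ref{PropapproxDir} that the weighted $L^2$-norms and Dirichlet energies of $\tilde f_N$ against $\tilde\gamma_{h,N}$ converge to those of $F$ against $\tilde\gamma_h$. Since $\tilde f_N$ is an admissible test function on $\mb R^N$, Theorem~\ref{thKramers} bounds the finite-dimensional Rayleigh quotient from below by $p(N)e^{-1/(4h)}(1-Ch)$, uniformly in $N$; letting $N\to\infty$ and invoking~\eqref{prefactorconv} gives the lower bound with the limiting prefactor and the same $Ch$ error, since $F$ was arbitrary. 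For the upper bound (Proposition~\ref{propUBinfty}), the paper does not use lattice eigenfunctions as competitors (which, as you note, would require tightness and a delicate passage to the limit); it instead plugs into the infinite-dimensional variational problem the single explicit cylindrical test function $\chi_h(\xi)=\chi_{h,0}(\int_{\mb T}\xi\,ds)$, whose discretization is exactly the finite-dimensional quasimode of~\cite{DGLP}, and reads off the required Laplace asymptotics from there. This two-sided test-function argument is both more elementary and more robust than resolvent convergence of generators, and it is all that is needed since $\lambda^{(1)}_h$ is by definition a variational quantity. You should recast your upper bound along these lines and drop the Mosco/strong resolvent step, replacing it with the concrete moment convergence estimates of Lemmas~\ref{LemmaapproxU} and~\ref{PropapproxDir}.
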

\noindent
Analogously to the finite-dimensional case, we have 
now that  $\lambda^{(1)}_h$ equals 
 the smallest non-zero eigenvalue of the closure in $L^2(e^{-U/h}\gamma_h )$ of the infinite-dimensional diffusion-type differential operator 
 \begin{equation} \label{defgeneratorinfty}    F \mapsto    -  L_h F  +    \lp D U , D F\rp_{L^2(\mb T)}    \  \   \      ,    \   \   \       F\in \mc FC_b^\infty(L^2(\mb T))    , 
 \end{equation}
 where $L_h$ denotes the Ornstein-Uhlenbeck operator which has $\gamma_h$ as invariant measure. 
Note that, after suitable unitary transformation, one might think of~\eqref{defgeneratorinfty} also as a rigorous version of 
the infinite-dimensional Schr\"odinger-type differential operator
in $L^2(\gamma_h)$
formally given by 
 \begin{equation} \label{defschrodingerinfty}      
 F\mapsto   - L_h F +    (\tfrac{1}{4h} \|D U\|_{L^2(\mb T)}^2  - \tfrac {1}{2h} L_h U) F   . 
 \end{equation}
We remark explicitly that~\eqref{defgeneratorinfty} is the $L^2$-generator of the following nonlinear stochastic heat equation, which is known under various names as e.g. stochastic Allen-Cahn, or Chafee-Infante equation: 
\[    \partial_t u    =   \tfrac{\mu}{4\pi^2} \partial_x^2 u   -   u^3   + u    + \sqrt{2h} \xi   .   \]
 On the other hand the operator~\eqref{defschrodingerinfty}  
 might be seen as an infinite-dimensional Witten Laplacian 
 (restricted to $0$-forms).

\section{Reduction to local problems around and off the diagonal}
 \label{SectionStrategy}

\noindent
We denote by $ \overline x:= \frac 1N \sum_{k=1}^N x_k  $
 the average of $x\in \mb R^N$.
The first step of the proof 
of Theorem~\ref{thMAIN} consists in decomposing the problem into two pieces: one localized in a small neighbourhood
of the space of constant states, i.e. satisfying $x- \overline x=0$, and one in the 
complementary set, which will turn out to be negligible for the low-lying spectrum in the $h\to 0$ limit.  
Since we want to analyze a quadratic form, the decomposition is most conveniently realized via a smooth quadratic partition of unity:

\

\noindent
We fix a $\chi\in C_{{\rm c}}^{\infty}(\mb R;[0,1])$
such that  $\chi\equiv1$ in $[- \tfrac 12 , \tfrac 12]$ and $\chi \equiv 0$
in $\mb R\setminus [- 1, 1]$. Further, for each $N\in \mb N$
and $R>0$ we define    $\theta_{N,R}$, $\tilde \theta_{N,R}:\mb R^N \ra [0,1]$ by setting  
\begin{equation}\label{defkappa0}
    \theta_{N, R} (x)   : =        \chi \Big( \tfrac{1}{N R^2} \sum_k (x_k-\overline x)^2\Big) 
\    \   ,      \     \   \tilde\theta_{N,R}(x)    :=   
   \big(1-\theta_{N,R}^2 (x)\big)^\frac12    ,  
 \end{equation}
 so that $ \theta^2_{N, R} +  \tilde \theta^2_{N, R}  \equiv 1$. 
Note that $\theta_{N,R}, \tilde \theta_{N,R}\in C^\infty(\mb R^N)$ 
for all $N\in \mb N$, $R>0$. 

\

\noindent
From a straightforward computation of commutators 
one gets for all $N\in \mb N$, $h,R>0$ and all $f\in C_b^\infty(\mb R^N)$
the identity (in general also known as IMS localization formula, see~\cite[Theorem 3.2]{CFKS})
\begin{gather}\label{firstIMSdecomp}
\mc E_{h, N} [f]  
=  \mc E_{h, N} [\theta_{N,R} f]    +   
\mc E_{h, N} [\tilde \theta_{N,R} f]
         +        h \mc F_{h, N, R}[f]           ,   
\end{gather}
where the localization error $\mc F_{h, N, R}[f]$ is given by 
 \[ \mc F_{h, N, R}[f] :=  -   N \int_{\mb R^N} \left( |\nabla \theta_{N,R}|^2  +  |\nabla \tilde\theta_{N,R}|^2    \right)  f^2  \, e^{- \frac{V_N }{hN}}  dx 
  .    \]
 Since for each $R>0$ there exists a constant $c(R)$ such that for every $N\in \mb N$
\begin{equation}\label{cutoffgradients}     N \left( |\nabla \theta_{N,R}|^2  +  |\nabla \tilde\theta_{N,R}|^2    
\right) \leq c(R)   ,  
\end{equation}
one obtains immediately for all $N\in \mb N$, $h,R>0$ and $f\in C_b^\infty(\mb R^N)$ 
\begin{equation}\label{Boundonlocalizationerror}    |\mc F_{h, N, R}[f] |  \leq  c(R)  \int_{\mb R^N}   f^2  \, e^{- \frac{V_N }{hN}}  dx .   
\end{equation}
As we show below at the end of this section, Theorem~\ref{thMAIN} will then be an easy consequence of the following two propositions. 
The first one concerns the term $ \mc E_{h, N} [\tilde\theta_{N,R} f] $
on the right hand side of~\eqref{firstIMSdecomp}. Indeed it implies that away from the diagonal the quadratic form $\mc E_{h,N}$ is large in $h$ and therefore does not contribute to the low-lying spectrum.

\begin{proposition}\label{MainPropAway}
 For every $R>0$ 
there exist constants $ C=  C(R)>0$ and   $h_0=  h_0(R)>0$ such 
that for all $h\in(0,  h_0]$, $N\in \mb N$ 
and  $ f\in C_b^\infty(\mb R^N)$   with 
$    \supp f 
 \subset
   \Big\{x\in \mb R^N:     \tfrac 1N \sum_{k} (x_k - \overline x)^2 \geq  R^2   \Big\}$ we have
\[ \mc E_{h,N} [f]     \geq      h^{-1} C    \int_{\mb R^N}  f^2 e^{-\tfrac{V_N}{hN}}  dx   . 
     \]
\end{proposition}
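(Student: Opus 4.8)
The plan is to localise in the average $\overline x=\tfrac1N\sum_k x_k$ and thereby reduce the estimate to a one‑parameter family of estimates on the hyperplane of zero–average configurations, where the energy is uniformly convex transversally to the diagonal and where the support of $f$ is an exponentially unlikely region. Concretely, split $\mb R^N=\mb R\,\mbf{1}\oplus V_\perp$ with $\mbf{1}=(1,\dots,1)$ and $V_\perp:=\{x:\sum_k x_k=0\}$, write $x=\overline x\,\mbf{1}+y$ with $y\in V_\perp$, so that $\sum_k(x_k-\overline x)^2=|y|^2$ and, since the coupling only feels $y$, $V_N(\overline x\,\mbf{1}+y)=\sum_k\tfrac14\big((\overline x+y_k)^2-1\big)^2+\tfrac{\mu}{8\sin^2(\pi/N)}\sum_k(y_k-y_{k+1})^2$. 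For each $\overline x$ let $\mu_{\overline x}$ denote the probability measure on $V_\perp$ with density proportional to $e^{-V_N(\overline x\,\mbf{1}+y)/(hN)}$.

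First I would record two dimension–uniform facts about $\mu_{\overline x}$. (i) Its $y$‑Hessian splits as $\mathrm{diag}_k\big(3(\overline x+y_k)^2-1\big)+\mathcal A|_{V_\perp}$, where the first term is $\geq-I$ and $\mathcal A|_{V_\perp}$ (the Hessian of the coupling part, restricted to $V_\perp$) has eigenvalues $a_k:=\tfrac{\mu\sin^2(\pi k/N)}{\sin^2(\pi/N)}\geq\mu$ for $k=1,\dots,N-1$ — here the transverse spectral gap $4\sin^2(\pi/N)$ of the cycle Laplacian exactly compensates the coefficient $\tfrac{\mu}{8\sin^2(\pi/N)}$. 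Hence $\hess_y V_N(\overline x\,\mbf{1}+\cdot)|_{V_\perp}\geq(\mu-1)I>0$ uniformly in $N$ and $\overline x$, so by Bakry--\'Emery $\mu_{\overline x}$ obeys a logarithmic Sobolev inequality with constant $\tfrac{\mu-1}{hN}$. (ii) By cyclic symmetry the barycenter of $\mu_{\overline x}$ is a shift–invariant vector of $V_\perp$, hence $0$, and the Brascamp--Lieb inequality gives $\int|y|^2\,d\mu_{\overline x}\leq hN\sum_k(a_k-1)^{-1}$; using $\sin(\pi k/N)\geq 2k/N$ for $k\leq N/2$ together with $\mu>1$ one checks that $\sum_k(a_k-1)^{-1}\leq C(\mu)$ uniformly in $N$, so $\int|y|^2\,d\mu_{\overline x}\leq C(\mu)hN$. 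Combining (i)--(ii) with the Herbst argument for the $1$‑Lipschitz map $y\mapsto|y|$ yields, for $h$ below some $h_0(\mu,R)$ and uniformly in $N$ and $\overline x$,
\[
\mu_{\overline x}\big(\{\,|y|^2\geq NR^2\,\}\big)\;\leq\;\exp\!\Big(-\tfrac{\mu-1}{2hN}\big(R\sqrt N-\sqrt{C(\mu)hN}\big)^2\Big)\;\leq\;\exp\!\Big(-\tfrac{(\mu-1)R^2}{8h}\Big),
\]
the decisive point being that the factors of $N$ cancel in the exponent.

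Next I would use the elementary fact that if $\phi$ is supported on a set $A$ with $\mu_{\overline x}(A)=p$, then $\phi^2/\!\int\!\phi^2\,d\mu_{\overline x}$ is a probability density supported on $A$ and hence has relative entropy $\geq\log(1/p)$ against $\mu_{\overline x}$; feeding this into the logarithmic Sobolev inequality gives $\int|\nabla_y\phi|^2\,d\mu_{\overline x}\geq\tfrac{\mu-1}{2hN}\log(1/p)\int\phi^2\,d\mu_{\overline x}$. Applying this with $\phi=f(\overline x,\cdot)$, which by hypothesis vanishes off $\{|y|^2\geq NR^2\}$, and using the previous display, one gets $\int|\nabla_y f|^2\,d\mu_{\overline x}\geq\tfrac{(\mu-1)^2R^2}{16\,h^2N}\int f^2\,d\mu_{\overline x}$. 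The identical inequality holds for the unnormalised fibre measures $e^{-V_N(\overline x\,\mbf{1}+y)/(hN)}\,dy$; integrating over $\overline x$ and bounding $|\nabla f|^2\geq|\nabla_y f|^2$ gives
\[
\mc E_{h,N}[f]=hN\!\int|\nabla f|^2 e^{-V_N/(hN)}dx\;\geq\;hN\!\int|\nabla_y f|^2 e^{-V_N/(hN)}dx\;\geq\;\frac{(\mu-1)^2R^2}{16\,h}\!\int f^2 e^{-V_N/(hN)}dx,
\]
which is the assertion with $C(R)=(\mu-1)^2R^2/16$.

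The one genuinely delicate point is dimensional uniformity: one must verify that the cycle's transverse spectral gap precisely cancels the coupling constant (yielding the $N$‑free convexity $\hess_y V_N|_{V_\perp}\geq(\mu-1)I$), that $\sum_k(a_k-1)^{-1}$ stays bounded as $N\to\infty$ — this is exactly where $\mu>1$ is used — and that the Gaussian concentration of $|y|$ at scale $R\sqrt N$ is matched to the $hN$‑scaling of the transverse variance, so that the resulting small‑probability bound $e^{-c/h}$ is dimension‑free. A minor subtlety is that the measures $\mu_{\overline x}$ need not be symmetric under $y\mapsto-y$, so their barycenter is pinned at the origin by cyclic invariance rather than by reflection. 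I would expect that the heavier machinery of the paper (ground‑state transformations, the NGS bound) is not needed for this off‑diagonal estimate and is reserved for the complementary near‑diagonal problem.
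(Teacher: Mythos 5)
Your proof is correct, and its skeleton coincides with the paper's: you condition on $\overline x$, identify the same $N$\nobreakdash-uniform transverse convexity $\hess_y\geq\mu-1$ on $V_\perp$, invoke Bakry--\'Emery to get a log-Sobolev inequality with constant $\tfrac{\mu-1}{hN}$ for the conditional measures $\mu_{\overline x}$, and then lower-bound $\int|\nabla_y\phi|^2\,d\mu_{\overline x}$ through the entropy of a restricted density -- which is precisely the NGS bound of Proposition~\ref{NGSbound} with $M=0$ and $\Omega=\{|y|^2\geq NR^2\}$, rederived inline. Where you genuinely depart from the paper is in the estimate of $p=\mu_{\overline x}(\{|y|^2\geq NR^2\})$, which in the paper's notation is the content of Proposition~\ref{propcompofNGSconstant}. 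You pin the barycenter of $\mu_{\overline x}$ at the origin via cyclic invariance (correctly noting that reflection symmetry $y\mapsto-y$ fails off the saddle), bound the transverse second moment by $C(\mu)hN$ via Brascamp--Lieb and the uniform trace estimate $\sum_k(\nu_k-1)^{-1}\leq C(\mu)$ (Lemma~\ref{remarkseries}), and then apply Herbst concentration to $|y|$, so that the $1/N$ in the log-Sobolev constant cancels the $N$ in the radius $R\sqrt N$ and produces the dimension-free tail $e^{-c R^2/h}$. The paper instead does a Laplace-type comparison of the two partition-type integrals: the odd $P_3$ part is discarded by Jensen against the reflection-symmetric remainder, the quartic $P_4$ correction is controlled by $e^{-t}\geq 1-t$ together with Lemma~\ref{Lemma4moment}, and the ratio of Gaussian normalisations is bounded by Lemma~\ref{lemmaRatioNorm}. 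Both routes hinge on the same uniform trace-class input; your concentration argument is conceptually more transparent and delivers a clean constant $C(R)=(\mu-1)^2R^2/16$, while the paper's Gaussian-integral calculation keeps track of the $\xi$\nobreakdash-dependence of the bound, which happens to be superfluous for this proposition but matches the style of the companion estimate around the inflection region.
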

\noindent
The second proposition concerns the term 
 $\mc E_{h, N} [\theta_{N,R} f]$
 on the right hand side of~\eqref{firstIMSdecomp}.
 It states that, when restricting to a sufficienlty small neighbourhood of the diagonal, $\mc E_{h,N}$ is larger than a constant except on a linear subspace of dimension at most $2$. 
\begin{proposition}\label{MainPropDiag}
There exist constants $R_0, h_0, C>0$ 
and, for every 
$h>0, N\in \mb N$, there exist functions $\phi^{+}_{h, N} ,\phi^{-}_{h, N}  \in C_b^\infty(\mb R^N)$ such that 
for all $h\in (0, h_0]$, $N\in \mb N$ and 
  $f\in C_b^\infty(\mb R^N)$ with    
  $\supp f  \subset  \Big\{x\in \mb R^N:     \tfrac 1N \sum_{k} (x_k - \overline x)^2 \leq  R_0^2   \Big\} $
  we have 
\begin{equation}\label{QLBdiag} \mc E_{h,N} [f]     \geq      C    \int_{\mb R^N}  f^2 e^{-\tfrac{V_N}{hN}}  dx  
     -    \Big(\int_{\mb R^N}  f  \phi^{+}_{h, N}    e^{-\tfrac{V_N}{hN}}  dx \Big)^2 
     -     
      \Big( \int_{\mb R^N}  f  \phi^{-}_{h, N}    e^{-\tfrac{V_N}{hN}}  dx  
       \Big)^2     .     
       \end{equation}
\end{proposition}
\noindent
The proofs of Proposition~\ref{MainPropAway} 
and Proposition~\ref{MainPropDiag} will be given respectively in Section~\ref{SectionAway} and Section~\ref{SectionDiag}.
\begin{proof}[Proof of Theorem~\ref{thMAIN}]
Thanks to Proposition~\ref{MainPropDiag} we can fix 
 $R_0, h'_0, C'>0$  
and, for every 
$h>0, N\in \mb N$, functions $\phi^{+}_{h, N} ,\phi^{-}_{h, N}  \in C_b^\infty(\mb R^N)$ such that~\eqref{QLBdiag} holds true
for all $h\in (0, h'_0]$, $N\in \mb N$, 
  $f\in C_b^\infty(\mb R^N)$ with    
  $\supp f  \subset  \Big\{x\in \mb R^N:     \tfrac 1N \sum_{k} (x_k - \overline x)^2 \leq  R_0^2   \Big\} $ and with $C'$ instead of $C$. 
  In particular, denoting 
  by $S_N$ a generic $3$-dimensional linear subspace of $C_b^\infty(\mb R^N)$ and picking for every $N\in \mb N, h>0$  a function $f^*_{h,N}\in S_N$ which 
  in $L^2(e^{-V/hN}dx)$ has norm one and is orthogonal
  to both  $\theta_{N,R_0} \phi^{+}_{h, N} $ and 
  $\theta_{N, R_0} \phi^{-}_{h, N} $, one obtains 
 \begin{equation}\label{contribIIMS} \mc E_{h,N} [ \theta_{N, R_0}f^*_{h, N}]     \geq      C'    \int_{\mb R^N}  
  |\theta_{N, R_0}f^*_{h, N}|^2 e^{-\tfrac{V_N}{hN}}  dx  
      \   \ \    \forall h\in (0, h'_0], N\in \mb N
       .     
       \end{equation}
Moreover it follows from Proposition~\eqref{MainPropAway}
that there exist $C'', h_0''>0$ such 
that for all $h\in(0,  h''_0]$, $N\in \mb N$ 
 it holds
\begin{equation}\label{contribIIIMS} \mc E_{h,N} [\tilde \theta_{N, R_0} f^*_{h, N} ]     \geq      h^{-1} C''    \int_{\mb R^N}  |\tilde \theta_{N, R_0} f^*_{h, N}|^2 e^{-\tfrac{V_N}{hN}}  dx   . 
  \end{equation}
Recalling the definition~\eqref{definitionkappa} and putting together the decomposition~\eqref{firstIMSdecomp}, 
the estimates~\eqref{contribIIMS},~\eqref{contribIIIMS}
  and the bound~\eqref{Boundonlocalizationerror} on the localization error
  one gets for every $N\in \mb N$ and $h\in (0, \min\{1, h_0',h_0''\})$
  the lower bound  
  \begin{gather*}
  \kappa^{(S_N)}_{h,N}   \geq  
    C'    \int_{\mb R^N}  
  |\theta_{N, R_0}f^*_{h, N}|^2 e^{-\tfrac{V_N}{hN}}  dx    + 
    C''    \int_{\mb R^N}  |\tilde \theta_{N, R_0} f^*_{h, N}|^2 e^{-\tfrac{V_N}{hN}}  dx    -     h c(R_0)     \geq     \\
   \geq     \      \min\{C', C''\}  -     h c(R_0) . 
      \end{gather*}
Taking $C:=  \tfrac 12 \min\{C', C''\} $, $h_0 := \min\{1, h_0',h_0'', \tfrac{c(R_0)}{C}\}$ and recalling Definition~\eqref{minmaxdef}
finishes the proof.        
  \end{proof}

\section{Auxiliary Tools}  \label{sec.Aux}  

\noindent
In this section we review briefly some auxiliary tools which will be used in the remainder of the paper. 
The key ingredient in the proofs of  Proposition~\ref{MainPropAway}
and Proposition~\ref{MainPropDiag} 
is the so-called NGS Bound~\cite{Gross, Si_EQFT}, which we recall below for the sake of the reader. We shall use the following standard conventions. 
We say that a probability measure $m$ on $\mb R^d$ satisfies a 
logarithmic Sobolev inequality with constant $\rho>0$ if for all $f\in C_b^\infty(\mb R^d)$ the inequality 
\begin{equation}
\int_{\mb R^d}     |\nabla f|^2 dm    
 \geq    \frac \rho 2 \,  {\rm{Ent}}_{m}[f^2]
\end{equation}
holds true, where  we have denoted
 by 
 \[{\rm{Ent}}_{m}[f^2] : = 
\int_{\mb R^d}   f^2 \log f^2 dm   -  \int_{\mb R^d}   f^2  dm  \log  \int_{\mb R^d} f^2 dm  \]
 the entropy of $f^2$ with respect to $m$.
\begin{proposition}[NGS Bound] \label{NGSbound}
Let $m$ be a probability measure  on $\mb R^d$
and let 
$M:\mathbb R^d\ra\mathbb R$ and $\Omega \subset \mb R^d$ 
be respectively a continuous function and an open set. 
If there exist constants $\rho, \Lambda>0$ such that
$m$ satisfies a logarithmic Sobolev inequality with constant $\rho$
and such that 
$\int_{\Omega} e^{-  \frac{2M}{\rho}} dm \in (0,  \Lambda]$, 
then  
 \begin{equation} \label{NGSboundestimate}   
   \int_{\mb R^d}   |\nabla f |^2 dm       +    
 \int_{\mb R^d}   M |f |^2 dm          \geq       C    \int_{\mb R^d}   |f |^2 dm   \    \     ,
\   \    \forall f \in C_b^\infty(\mb R^d) : \supp f \subset \Omega  ,   \end{equation}
 where the contant $C\in \mb R$ is given by 
 \[ C  \   :=   \       - \tfrac \rho 2 \log  \Lambda   .    \]
\end{proposition}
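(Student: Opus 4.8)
\noindent
The plan is to use the homogeneity of~\eqref{NGSboundestimate} in $f$: both sides scale quadratically under $f\mapsto\lambda f$, so we may assume $\int_{\mb R^d} f^2\,dm=1$ (if $f=0$ $m$-a.e.\ there is nothing to prove), and then ${\rm Ent}_m[f^2]=\int_{\mb R^d} f^2\log f^2\,dm$. Feeding the normalization into the logarithmic Sobolev inequality gives $\int_{\mb R^d}|\nabla f|^2\,dm\geq\tfrac\rho2\int_{\mb R^d}f^2\log f^2\,dm$, so that~\eqref{NGSboundestimate} follows once we establish the purely entropic estimate
\[
\int_{\mb R^d} f^2\log f^2\,dm\;+\;\frac2\rho\int_{\mb R^d} M f^2\,dm\;\geq\;-\log\Lambda ;
\]
indeed, multiplying this by $\rho/2$ and adding the Dirichlet bound above reproduces exactly $\int|\nabla f|^2\,dm+\int M f^2\,dm\geq-\tfrac\rho2\log\Lambda=C$.

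For the entropic estimate I would set $g:=f^2$, which is a probability density with respect to $m$ whose support lies, up to an $m$-null set, inside $\supp f\subset\Omega$. Rewriting the left-hand side as $-\int_{\{g>0\}} g\,\log\big(e^{-2M/\rho}/g\big)\,dm$ and applying Jensen's inequality to the concave function $\log$ against the probability measure $g\,dm$ yields
\[
\int_{\{g>0\}} g\,\log\!\big(e^{-2M/\rho}/g\big)\,dm\;\leq\;\log\int_{\{g>0\}} e^{-2M/\rho}\,dm\;\leq\;\log\int_{\Omega} e^{-2M/\rho}\,dm\;\leq\;\log\Lambda ,
\]
which is precisely the desired bound. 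Equivalently, this step is the non-negativity of the relative entropy of $g\,dm$ with respect to the normalized measure $e^{-2M/\rho}\,dm\big/\!\int_\Omega e^{-2M/\rho}\,dm$ on $\Omega$ (the Gibbs variational principle), the loss of the normalizing constant producing the term $-\log\Lambda$; this is the only place where the hypotheses on $\Omega$ and $\Lambda$ enter.

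The argument is short, and its sole substantive analytic ingredient is this single use of Jensen's inequality — the hypercontractive content is already packaged into the hypothesis that $m$ satisfies a logarithmic Sobolev inequality. The only point requiring a little care is the measure-theoretic bookkeeping ensuring that all integrals are well defined: with the convention $0\log 0=0$, the elementary inequality $t\log(1/t)\le 1/e$ for $t\in[0,1]$ and the fact that $m$ is a probability measure give $\int g(\log g)^-\,dm\le 1/e$, so ${\rm Ent}_m[f^2]$ is well defined in $(-\infty,+\infty]$; likewise $M^-\le\tfrac\rho2 e^{-2M/\rho}$ (from $\log t\le t$) together with the assumption $\int_\Omega e^{-2M/\rho}\,dm\le\Lambda$ gives $\int_{\mb R^d} M^- f^2\,dm\le\tfrac\rho2\|f\|_\infty^2\,\Lambda<\infty$, so that $\int M f^2\,dm$ is well defined in $(-\infty,+\infty]$ and the asserted inequality is meaningful (and trivial when its left-hand side is $+\infty$). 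I do not expect any genuine obstacle here; the whole proof is the reduction via the logarithmic Sobolev inequality followed by one application of Jensen's inequality.
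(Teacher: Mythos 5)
Your argument is correct, and it is essentially the proof the paper has in mind: the paper does not reproduce a proof but simply points to Gross's original argument, and what you have written is exactly that argument (normalize $\int f^2\,dm=1$, apply the log-Sobolev inequality, then use Jensen's inequality / the Gibbs variational principle for the entropy against $e^{-2M/\rho}$), with the support restriction to $\Omega$ entering precisely where the remark in the paper says it should. Your measure-theoretic remarks (the $1/e$ bound on $g(\log g)^-$ and the bound $M^-\le\tfrac\rho2 e^{-2M/\rho}$) correctly dispose of the integrability issues, so the proof is complete.
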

\noindent
Note that in case that $M$ is bounded from below by a constant $M_0$
one recovers the trivial bound $C= M_0$. But crucially, 
for the finiteness of the integral $\int_{\Omega} e^{-  \frac{2M}{\rho}} dm$,
the boundedness of $M$ from below is not needed. 
\begin{remark}
The NGS Bound is usually stated with $\Omega=\mb R^d$. The slightly more general variant presented in Proposition~\ref{NGSbound} can be easily proven by repeating the original proof given in~\cite{Gross} and inserting suitable indicator functions of $\Omega$. 
\end{remark}

\noindent
The NGS Bound will be used in combination with the following two facts.
The first one follows from a simple computation which permits to transform by a unitary transformation the quadratic form $\mc E_{h.N}$
into an equivalent quadratic form where a $0$-order term appears 
as in the left hand side of~\eqref{NGSboundestimate}. This transformation is also called ground state transformation. More precisely the following holds. 
\begin{lemma}[Ground State Transformation]\label{GST}
Let $U,  W \in C^2(\mb R^d)$. Then for every $f\in C^\infty_c(\mb R^d)$, defining
 $ g :=  \exp{\left( -\tfrac{U-W}{2} \right)}$, one has 
\[    \int_{\mb R^d} |\nabla f |^2 e^{-U} dx      =  \int_{\mb R^d} |\nabla g |^2 e^{- W} dx
+    \int_{\mb R^d}   M   g^2 e^{- W} dx       ,            \] 
where $M:\mb R^d  \to \mb R$ is given by
\[    M   :  =   \tfrac 14 \left( |\nabla U|^2         -   |\nabla W|^2   \right)
 - \tfrac 12 \Delta(U-W)    .    \]
\end{lemma}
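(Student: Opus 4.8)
The plan is to carry out the standard ground state transformation, which here reduces to one line of differentiation followed by a single integration by parts. I introduce the substitution $f = g\, e^{(U-W)/2}$, i.e. $g = f\, e^{-(U-W)/2}$ as in the statement; since $f \in C^\infty_c(\mb R^d)$ and $U-W \in C^2(\mb R^d)$, the function $g$ lies in $C^2_c(\mb R^d)$, so all manipulations below are legitimate. First I would compute
\[ \nabla f = e^{(U-W)/2}\Big( \nabla g + \tfrac12\, g\, \nabla(U-W) \Big), \]
and observe that multiplying $|\nabla f|^2$ by $e^{-U}$ makes the exponent collapse, because $2\cdot\tfrac{U-W}{2} - U = -W$; hence
\[ |\nabla f|^2\, e^{-U} = e^{-W}\Big( |\nabla g|^2 + g\, \nabla g\cdot\nabla(U-W) + \tfrac14\, g^2\, |\nabla(U-W)|^2 \Big). \]

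Next I would integrate over $\mb R^d$ and treat the cross term by integration by parts. Since $g\,\nabla g = \tfrac12\nabla(g^2)$ and $g^2$ has compact support,
\[ \int_{\mb R^d} e^{-W}\, g\, \nabla g\cdot\nabla(U-W)\, dx = -\tfrac12\int_{\mb R^d} g^2\, \diver\!\big(e^{-W}\nabla(U-W)\big)\, dx, \]
which is valid precisely because $U-W\in C^2$ and $W\in C^1$ make the vector field $e^{-W}\nabla(U-W)$ of class $C^1$. Using $\diver\!\big(e^{-W}\nabla(U-W)\big) = e^{-W}\big(\Delta(U-W) - \nabla W\cdot\nabla(U-W)\big)$ and collecting the three terms yields the asserted identity with
\[ M = \tfrac14\,|\nabla(U-W)|^2 - \tfrac12\,\Delta(U-W) + \tfrac12\, \nabla W\cdot\nabla(U-W). \]

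Finally I would simplify this $M$. Expanding $|\nabla(U-W)|^2 = |\nabla U|^2 - 2\,\nabla U\cdot\nabla W + |\nabla W|^2$, the mixed term $-\tfrac12\,\nabla U\cdot\nabla W$ cancels the $+\tfrac12\,\nabla W\cdot\nabla U$ coming from $\tfrac12\,\nabla W\cdot\nabla(U-W)$, the two $|\nabla W|^2$ contributions combine to $(\tfrac14 - \tfrac12)|\nabla W|^2 = -\tfrac14|\nabla W|^2$, and one is left with $M = \tfrac14\big(|\nabla U|^2 - |\nabla W|^2\big) - \tfrac12\Delta(U-W)$, which is the claim. I do not expect a genuine obstacle here: the statement is a routine identity, and the only points worth making explicit are that the hypotheses $U,W\in C^2$ are exactly what is needed for the divergence in the integration by parts to make sense, and that the compact support of $f$ (hence of $g$) is what makes the boundary term vanish.
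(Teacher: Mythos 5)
Your computation is correct and is exactly the ``simple computation'' the paper alludes to before stating the lemma; the paper itself records no proof, so there is nothing to compare against beyond the standard ground state transformation, which you carry out cleanly (differentiate, observe the exponent collapses, integrate the cross term by parts, simplify the resulting potential). One small point worth flagging: as printed, the lemma defines $g := \exp\!\left(-\tfrac{U-W}{2}\right)$ with no factor of $f$, which cannot be the intended meaning (the right-hand side would then be independent of $f$); you silently read it as $g := f\,\exp\!\left(-\tfrac{U-W}{2}\right)$, which is the correct reading and the one used everywhere the lemma is invoked (e.g.\ the explicit formula $g = e^{(-V_N+U_N)/(2hN)}f$ in the proof of Proposition~\ref{propinflection}), so your interpretation and proof are sound.
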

\noindent
The second important ingredient in order to exploit the NGS-Bound is the well-known Bakry-\'Emery criterion~\cite{BE}. The latter permits to 
give a quantitative bound on the logarithmic Sobolev constant in case of a uniformly convex potential (see also~\cite{BaBodineau} for an extension to singular nonconvex potentials).  
For the sake of the reader we recall the precise statement  
of the Bakry-\'Emery criterion, which we use in the present paper.
\begin{proposition}[Bakry-\'Emery criterion]\label{BEcriterion}
Let $U\in C^2(\mb R^d)$ and assume that there exists a $C>0$  
such that $\hess U(x) \geq C$ for all $x\in \mb R^d$. Then the probability measure $m(dx)$ on $\mb R^d$ proportional to $e^{-U}dx$ satisfies 
a logarithmic Sobolev inequality with constant $C$. 
\end{proposition}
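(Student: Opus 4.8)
The plan is to prove the Bakry--\'Emery criterion by the classical $\Gamma_2$-calculus combined with the entropy--dissipation identity. Throughout, let $L:=\Delta-\nabla U\cdot\nabla$ be the generator of the diffusion reversible with respect to $m$, let $(P_t)_{t\ge 0}$ be the associated Markov semigroup, and recall the carr\'e du champ $\Gamma(f)=|\nabla f|^2$ together with its iterate $\Gamma_2(f):=\tfrac12 L|\nabla f|^2-\nabla f\cdot\nabla(Lf)$. The hypothesis $\hess U\ge C$ enters exactly through the Bochner formula, which here reads $\Gamma_2(f)=\|\hess f\|^2+\hess U(\nabla f,\nabla f)$; hence $\Gamma_2(f)\ge C\,\Gamma(f)$, i.e.\ the curvature condition $CD(C,\infty)$ holds.

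\emph{Step 1 (gradient bound).} From $\Gamma_2\ge C\,\Gamma$ I would derive a pointwise sub-commutation estimate for the semigroup: differentiating $s\mapsto P_s\big(\Gamma(P_{t-s}f)\big)$ on $[0,t]$ gives $\tfrac{d}{ds}P_s\big(\Gamma(P_{t-s}f)\big)=2\,P_s\big(\Gamma_2(P_{t-s}f)\big)\ge 2C\,P_s\big(\Gamma(P_{t-s}f)\big)$, and Gronwall's inequality yields $\Gamma(P_t f)\le e^{-2Ct}\,P_t\big(\Gamma(f)\big)$. Since $(a,b)\mapsto a^2/b$ is jointly convex on $\mb R\times(0,\infty)$, Jensen's inequality applied to the Markov kernel $P_t$ upgrades this to $\Gamma(P_t g)/P_t g\le e^{-2Ct}\,P_t\big(\Gamma(g)/g\big)$ for strictly positive $g$.

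\emph{Step 2 (entropy dissipation).} For a sufficiently regular $g>0$ with $\int g\,dm=1$, using the self-adjointness of $L$, the identity $\int L(\cdot)\,dm=0$ (de Bruijn identity), and ergodicity of $(P_t)$ so that ${\rm Ent}_m[P_t g]\to 0$ as $t\to\infty$, one obtains ${\rm Ent}_m[g]=-\int_0^\infty\tfrac{d}{dt}{\rm Ent}_m[P_t g]\,dt=\int_0^\infty\!\int \Gamma(P_t g)/P_t g\,dm\,dt$. Inserting Step 1 and $\int_0^\infty e^{-2Ct}\,dt=\tfrac1{2C}$ gives ${\rm Ent}_m[g]\le\tfrac1{2C}\int \Gamma(g)/g\,dm$. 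Specializing to $g=f^2$ (after regularizing so that $g$ is bounded and bounded away from zero, which covers all $f\in C_b^\infty$) produces ${\rm Ent}_m[f^2]\le\tfrac2C\int|\nabla f|^2\,dm$, that is, the logarithmic Sobolev inequality with constant $C$ in the normalization of this section.

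\emph{Main obstacle.} The conceptual core above is short; the real difficulty is the analytic bookkeeping under the weak hypotheses ($U\in C^2$, no growth control beyond $\hess U\ge C$): one must justify the construction and, if needed, the smoothing of $(P_t)$, the differentiations under the integral and integrations by parts with vanishing boundary terms at infinity, the density of admissible test functions among bounded ones bounded away from zero, and the ergodic decay ${\rm Ent}_m[P_t g]\to 0$ (which follows, for instance, from the Poincar\'e inequality implied by $CD(C,\infty)$, or from hypercontractivity). A clean way to handle all of this is to first replace $U$ by a mollification $U_\varepsilon$ that is smooth, satisfies $\hess U_\varepsilon\ge C-o(1)$, and has controlled growth, prove the inequality for $U_\varepsilon$ with the corresponding (uniform as $\varepsilon\to0$) constant, and then pass to the limit $\varepsilon\to 0$.
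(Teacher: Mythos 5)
The paper does not actually prove Proposition~\ref{BEcriterion}: it is quoted as a classical result and the authors simply cite Bakry--\'Emery~\cite{BE}, so there is no in-paper proof to compare against. Your outline is the standard $\Gamma_2$/semigroup proof of that classical theorem and the overall plan (curvature condition via Bochner, gradient commutation, de Bruijn entropy dissipation) is correct; you also correctly identify that the genuine labor lies in the regularity and integration-by-parts bookkeeping, which can be absorbed into a mollification argument.

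There is, however, a real (if well-known and fixable) gap in your Step~1. The interpolation $\tfrac{d}{ds}P_s\big(\Gamma(P_{t-s}f)\big)=2P_s\big(\Gamma_2(P_{t-s}f)\big)\ge 2C\,P_s\big(\Gamma(P_{t-s}f)\big)$ gives the \emph{weak} gradient bound $\Gamma(P_tf)\le e^{-2Ct}P_t\Gamma(f)$. But the Jensen step you invoke, with the jointly convex map $(a,b)\mapsto a^2/b$, yields
\[
\frac{\big(P_t\sqrt{\Gamma(g)}\big)^2}{P_tg}\;\le\;P_t\!\left(\frac{\Gamma(g)}{g}\right),
\]
and this cannot be combined with the weak bound alone, since in general $P_t\Gamma(g)\ge\big(P_t\sqrt{\Gamma(g)}\big)^2$. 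To close the argument one needs the \emph{strong} commutation estimate $\sqrt{\Gamma(P_tf)}\le e^{-Ct}P_t\!\big(\sqrt{\Gamma(f)}\big)$, which follows from $CD(C,\infty)$ as well, but requires a slightly different interpolation (differentiate $s\mapsto P_s\big(\sqrt{\Gamma(P_{t-s}f)+\varepsilon}\big)$, use $\Gamma_2\ge C\Gamma$ together with the Cauchy--Schwarz inequality $\Gamma(\Gamma(u))\le 4\,\Gamma(u)\,\Gamma_2(u)$, and let $\varepsilon\to 0$). With that replacement, Step~1 does give $\Gamma(P_tg)/P_tg\le e^{-2Ct}\,P_t\big(\Gamma(g)/g\big)$, Step~2 goes through, and the constant matches the normalization used in Section~\ref{sec.Aux}.
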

\noindent
We shall use also the following fact, which might be deduced
as a simple corollary of the Bakry-\'Emery criterion:
if $U\in C^2(\mb R^d)$ and if there exists a $C>0$  
such that $\hess U(x) \geq C$ for all $x\in \mb R^d$, then the probability measure $m(dx)$ on $\mb R^d$ proportional to $e^{-U}dx$ satisfies 
 the 
following Poincar\'e inequality: 
 \begin{equation} \label{PoincBE}   
   \int_{\mb R^d}   |\nabla f |^2 dm          \geq       C  \left(   \int_{\mb R^d}   f^2 dm
   -       \big(\int_{\mb R^d} f dm     \big)^2 \right)    \    \     ,
\   \    \forall f \in C_b^\infty(\mb R^d)   .   
\end{equation}

\section{Proofs of the local estimates} \label{sec.Loc}   
\noindent
This section is devoted to the proofs of Proposition~\ref{MainPropAway} and Proposition~\ref{MainPropDiag}. We first introduce some notation and discuss basic porperties of the interaction part in the energy $V_N$. 

\

\noindent
Following the notation of \cite{DGLP} we denote by 
$K= K_N:\mb R^N \ra \mb R^N$ the normalised discrete Laplacian defined by setting for
 $x\in \mb R^{N}$ and $k\in\{1, \dots, N\}$
  \begin{equation*}\label{defdiscreteLaplace}   (K  x)_k   :=   \frac{\mu}{4\sin^2(\frac \pi N)} \big(  2 x_k  -    x_{k+1}  -   x_{k-1}    
   \big)     ,  
 \end{equation*}
 with the conventions $x_{N+1}:=x_1$ and $x_{0}:=x_N$. The interaction term in the energy $V_N$ can then  be written more compactly in terms of $K$, since 
 \[  \frac{\mu}{8\sin^2(\frac{\pi}{N})}   \   \sum_{k=1}^N  
   (x_{k} - x_{k+1})^2       =     \tfrac 12  \lp x, Kx \rp     ,   \]
   where $\lp \cdot, \cdot\rp$  is the standard scalar product in $\mb R^N$.
 The operator $K$ is diagonalised
through the discrete Fourier transform 
$\hat x\in \mb R^N$ of $x\in \mb R^N$, defined by
\[    \hat x_k    \  :  =    \    \frac{1}{\sqrt N}   \sum_{j=1}^{N}    x_j \    e^{-\im 2\pi \frac{j}{N}k}     \ .     \]
More precisely we have for every $k\in \{0, \dots, N-1\}$, 
\begin{equation}\label{eigenvaluesK}   (\widehat{Kx})_k    =     \nu_k \  \hat x_k    \   ,   \    \     \   \text{ where }    \   
\nu_{k}   =  \nu_{k, N}   :=       \mu   \,  \frac{\sin^2(k\frac \pi N)}{\sin^2(\frac \pi N)}        . 
\end{equation}
Note that  $\nu_k \sim k^2$ for large $N$. As a consequence, 
 due to the convergence of the series $\sum_{k=1}^\infty \tfrac{1}{k^2}$,
the resolvent of $K$ is uniformly trace class in $N$.
More precisely we shall use the following fact, whose proof is elementary.  
\begin{lemma}\label{remarkseries}
For every $\alpha> -\mu$ there exists a $\gamma(\alpha)>0$ such that for every $N\in \mb N$ 
we have
$  \sum_{k=1}^{N-1} \tfrac{1}{\nu_k + \alpha}   \leq    \gamma(\alpha) $.
\end{lemma}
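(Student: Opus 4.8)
The plan is to bound each eigenvalue $\nu_{k,N}$ from below by a multiple of $k^2$ which is uniform in $N$, and then to compare the sum with the convergent series $\sum_{k\ge 1} k^{-2}$. First I would use the symmetry $\nu_{k,N}=\nu_{N-k,N}$, which is immediate from $\sin^2\big((N-k)\tfrac{\pi}{N}\big)=\sin^2\big(\pi-k\tfrac{\pi}{N}\big)=\sin^2\big(k\tfrac{\pi}{N}\big)$. This reduces the estimate to a bound on $\sum_{1\le k\le N/2}(\nu_k+\alpha)^{-1}$, at the cost of the harmless factor $2$ (for even $N$ the middle index $k=N/2$ is then counted at most twice, which is fine).

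Next I would record two elementary lower bounds, both valid for $1\le k\le N/2$, i.e. for $\tfrac{\pi}{N}\le k\tfrac{\pi}{N}\le\tfrac{\pi}{2}$. Since $t\mapsto\sin^2 t$ is nondecreasing on $[0,\tfrac{\pi}{2}]$, one has $\nu_{k,N}\ge\nu_{1,N}=\mu$, hence every denominator satisfies $\nu_k+\alpha\ge\mu+\alpha>0$. Moreover, Jordan's inequality $\sin t\ge\tfrac{2}{\pi}t$ on $[0,\tfrac{\pi}{2}]$ gives $\sin^2(k\tfrac{\pi}{N})\ge 4k^2/N^2$, while $\sin^2(\tfrac{\pi}{N})\le\pi^2/N^2$, so that $\nu_{k,N}=\mu\,\sin^2(k\tfrac{\pi}{N})/\sin^2(\tfrac{\pi}{N})\ge\tfrac{4\mu}{\pi^2}k^2$, with a constant not depending on $N$.

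Finally I would combine the two bounds. If $\alpha\ge 0$, then $\nu_k+\alpha\ge\tfrac{4\mu}{\pi^2}k^2$ directly, so $\sum_{k=1}^{N-1}(\nu_k+\alpha)^{-1}\le\tfrac{\pi^2}{2\mu}\sum_{k\ge1}k^{-2}=:\gamma(\alpha)$. If $-\mu<\alpha<0$, I would fix an integer $k_\alpha$ depending only on $\mu$ and $\alpha$ (not on $N$) so large that $\tfrac{4\mu}{\pi^2}k^2\ge -2\alpha$ for all $k\ge k_\alpha$; then $\nu_k+\alpha\ge\tfrac{2\mu}{\pi^2}k^2$ for $k\ge k_\alpha$, while for $k<k_\alpha$ one uses $\nu_k+\alpha\ge\mu+\alpha$, which yields $\sum_{k=1}^{N-1}(\nu_k+\alpha)^{-1}\le\tfrac{2k_\alpha}{\mu+\alpha}+\tfrac{\pi^2}{\mu}\sum_{k\ge1}k^{-2}=:\gamma(\alpha)$, again independent of $N$.

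There is no genuine obstacle here; the statement is elementary, as claimed in the text. The only point requiring slight care is the range $\alpha<0$, where the naive comparison $\nu_k+\alpha\ge\mathrm{const}\cdot k^2$ fails for the smallest indices; this is handled by splitting off an $N$-independent finite number of terms and bounding them by the uniform positive lower bound $\mu+\alpha$ on all denominators. One should also keep track of the fact that every constant produced along the way ($\tfrac{4\mu}{\pi^2}$, $k_\alpha$, and the final $\gamma(\alpha)$) depends only on $\mu$ and $\alpha$, which is clear from the construction.
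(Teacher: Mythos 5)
Your proof is correct and fills in exactly the argument the paper gestures at in the sentence just before the lemma ($\nu_k \sim k^2$, hence compare with the convergent series $\sum k^{-2}$). The symmetry reduction to $k \le N/2$, the use of Jordan's inequality to get the $N$-independent lower bound $\nu_{k,N} \ge \tfrac{4\mu}{\pi^2}k^2$ there, and the split at an $N$-independent index $k_\alpha$ to handle $\alpha<0$ are all carried out correctly, and every constant depends only on $\mu$ and $\alpha$ as required.
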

\noindent
Observe also that $\nu_0=0$ is a simple eigenvalue of $K$ corresponding to the eigenspace of constant states and that its smallest non-zero eigenvalue equals $\mu$ for every 
$N\in \mb N, N\geq 2$. This implies immediately the following  Poincar\'e inequality, where we recall that $\overline x =   \tfrac 1N   \sum_k x_k =   \tfrac{1}{\sqrt N} \hat x_0$:
\begin{equation}\label{discretePoincare}
\lp x, K x \rp     \geq     \mu \sum_{k}    (x_k   -   \overline x)^2 
   \ \  \     ,  \  \   \    \forall x\in \mb R^N . 
\end{equation}
We note also the following Sobolev-type inequality, which, 
at the cost of lowering the constant $\mu$, allows to
substitue on the right hand side of~\eqref{discretePoincare}
the Euclidean norm with a supremum norm. This will be useful later in analysing the regions of convexity of $V_N$. 
\begin{lemma}\label{Sobolevlemma}
Let  $\gamma(0)>0$ as in Lemma~\ref{remarkseries}. Then 
\[    \lp x, K x \rp    \geq  \tfrac{1}{\gamma(0)}   N\sup_k |x_k - \overline x|^2       
   \ \   \     ,    \ \  \    \forall   x\in \mb R^N   . \] 
\end{lemma}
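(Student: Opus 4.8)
The plan is to work in the Fourier variables introduced above. Since $K$ is diagonalised by the discrete Fourier transform with eigenvalues $\nu_k$, and since by~\eqref{eigenvaluesK} one has $\nu_0=0$ while $\nu_k>0$ for every $k\in\{1,\dots,N-1\}$, we may write
\[   \lp x, Kx\rp   =   \sum_{k=1}^{N-1}   \nu_k\, |\hat x_k|^2  .   \]
On the other hand, applying the inverse discrete Fourier transform and using that $\overline x = \tfrac{1}{\sqrt N}\hat x_0$ is precisely the zero mode, one obtains for every $j\in\{1,\dots,N\}$ the representation $x_j-\overline x = \tfrac{1}{\sqrt N}\sum_{k=1}^{N-1} \hat x_k\, e^{\im 2\pi\frac{j}{N}k}$, in which the degenerate mode $k=0$ no longer appears.

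From here I would estimate, using the triangle inequality and then the Cauchy--Schwarz inequality with the weights $\nu_k^{-1/2}$ and $\nu_k^{1/2}|\hat x_k|$,
\[   |x_j-\overline x|   \le   \frac{1}{\sqrt N}\sum_{k=1}^{N-1} |\hat x_k|   \le   \frac{1}{\sqrt N}   \Big(\sum_{k=1}^{N-1}\frac{1}{\nu_k}\Big)^{\!1/2}   \Big(\sum_{k=1}^{N-1}\nu_k|\hat x_k|^2\Big)^{\!1/2}  .   \]
By Lemma~\ref{remarkseries} applied with $\alpha=0$ we have $\sum_{k=1}^{N-1}\nu_k^{-1}\le\gamma(0)$, and the second sum equals $\lp x,Kx\rp$; hence $N\,|x_j-\overline x|^2\le \gamma(0)\,\lp x,Kx\rp$ for every $j$, and taking the supremum over $j\in\{1,\dots,N\}$ gives the asserted inequality. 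For $N=1$ the statement is trivial, since then $x_k-\overline x\equiv 0$ and $\lp x,Kx\rp=0$.

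I do not expect any genuine obstacle here: the argument is pure bookkeeping. The only points that require a little care are the normalisation convention for the discrete Fourier transform (so that Parseval holds without spurious factors of $N$, which is where the factor $N$ on the right-hand side of the claim ultimately comes from, via $|x_j-\overline x|\le \tfrac{1}{\sqrt N}\sum_k|\hat x_k|$) and the observation that one must subtract $\overline x$ — equivalently, discard the $k=0$ mode — before Cauchy--Schwarz can be applied with the weights $\nu_k^{-1/2}$, since $\nu_0=0$ would otherwise make the series $\sum_k \nu_k^{-1}$ diverge and render Lemma~\ref{remarkseries} inapplicable.
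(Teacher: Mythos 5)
Your proof is correct and follows the same route as the paper's: pass to Fourier variables, drop the $k=0$ mode by subtracting $\overline x$, bound $|x_j-\overline x|$ by $\tfrac{1}{\sqrt N}\sum_{k\ge 1}|\hat x_k|$, and apply Cauchy--Schwarz with the weights $\nu_k^{\pm 1/2}$ together with Lemma~\ref{remarkseries} at $\alpha=0$. The only cosmetic difference is that the paper first reduces to $\overline x=0$ and works with $\sup_k|x_k|$ directly, whereas you carry $x_j-\overline x$ throughout; the content is identical.
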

\begin{proof}
Let $x\in \mb R^N$ and assume that $\overline x =0$. Then 
\begin{gather*}
\sup_k |x_k|      \leq    \tfrac{1}{\sqrt N} \sum_{k=0}^{N-1}
|\hat x_k| =   
\tfrac{1}{\sqrt N} \sum_{k=1}^{N-1} \sqrt{\nu_k}
|\hat x_k|   \tfrac{1}{ \sqrt{\nu_k}}   \  \leq     \\   
\leq    \ 
\tfrac{1}{\sqrt N}   \left( \sum_{k=1}^{N-1} \nu_k
|\hat x_k|^2  \right)^{\tfrac 12}    \sqrt{\gamma(0) }     =  
\tfrac{1}{\sqrt N}    \lp x, Kx \rp^{\tfrac 12}    \sqrt{\gamma(0) } ,     
\end{gather*}
which finishes the proof in this case. 
If $\overline x \neq 0$ one can apply the argument to $x-\overline x$. 
\end{proof}

\subsection{Away from the diagonal: Proof of Proposition~\ref{MainPropAway}}\label{SectionAway}

\noindent
In order to prove Proposition~\ref{MainPropAway} we consider
the decomposition $\mb R^N = \mc C \oplus \mc C^{\perp}$, where
$\mc C:=  \{ x\in \mb R^N: x- \overline x =0\}$ (the space of constant states) and 
$\mc C^{\perp}:=  \{ x\in \mb R^N: \overline x =0\}$ (the space of 
mean zero states). On $\mc C$ we consider the coordinate $\xi$
with respect to the (non-normalized) basis vector $(1, \dots, 1)$. 
On $\mc C^{\perp}$ we fix an arbitrary orthonormal basis, denote by 
$y = (y_1, \dots, y_{N-1})$ the corresponding coordinates and by 
$A$ the $N\times N-1$ matrix so that each $x\in \mb R^N$
is uniquely determined by its coordinates $(\xi, y)\in (\mb R, \mb R^{N-1})$ via 
\[     x =   \xi  (1, \dots, 1)   + A y  .   \]
Note that in the $(\xi, y)$ coordinates we have
\[  \tilde V_N(\xi, y) :=  V_N( \xi  (1, \dots, 1)   + A y)   = 
F^{(\xi)}_N (y)    +  \tfrac N4 (\xi^2-1)^2    ,   \]
where for each $\xi \in \mb R$ and $N\in \mb N$ we 
have defined the function $F^{(\xi)}_N: \mb R^{N-1} \to \mb R$ by 
\[   F^{(\xi)}_N (y) :  =    \tfrac 14 P_4 (y) + \xi P_3(y) +    \tfrac 32 \xi^2  P_2(y)   +   \tfrac 12 Q(y)   , \]
with
\[Q(y) :=     \tfrac{\mu}{4\sin^2(\frac{\pi}{N})}   \   \sum_{k=1}^N  
   \left( (Ay)_{k} - (Ay)_{k+1} \right)^2    -    P_2(y) ,  \]
and with  $P_m(y): = \sum_{k=1}^N (Ay)_k^m$ for $m=2,3,4$. Note that $P_2(y) = \sum_k y_k^2$ for every $y\in \mb R^{N-1}$, since
we have chosen the coordinates $y$ to be orthonormal.

\begin{proposition} \label{propNGSawayfromdiagonal}
Fix $R>0$. Then for every $\xi\in \mb R$, $N\in \mb N$ and for every 
$\phi\in C_b^\infty(\mb R^{N-1}) $ such that 
$\supp \phi \subset \{y \in \mb R^{N-1}: P_2(y) \geq NR^2\}$
we have 
\[  hN  \int_{\mb R^{N-1}}  |\nabla \phi(y)|^2 
e^{-\tfrac{F^{(\xi)}_N(y)}{hN}} dy     \geq  
  C(h, N, \xi)     \int_{\mb R^{N-1}}  | \phi(y)|^2 
e^{-\tfrac{F^{(\xi)}_N(y)}{hN}} dy        ,  \]
  where 
  \[  C(h, N, \xi)   :=  \frac{\mu-1}{2}   \left( 
   \log  \int_{\mb R^{N-1}}  e^{-\tfrac{F_N^{(\xi)}(y)}{hN}}    dy 
-   \log \int_{\{P_2 \geq NR^2\}}
e^{-\tfrac{F_N^{(\xi)}(y)}{hN}}     dy \right)   .   \]
\end{proposition}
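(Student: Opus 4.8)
The plan is to apply the NGS Bound (Proposition~\ref{NGSbound}) with the ground state transformation (Lemma~\ref{GST}) in the $(N-1)$-dimensional variable $y$, for fixed $\xi$. The function $F^{(\xi)}_N$ is a perturbation of a quadratic form by terms that are \emph{convex} (the quartic $\tfrac14 P_4$ and the quadratic $\tfrac32\xi^2 P_2$ are convex) together with a cubic term $\xi P_3$ and the interaction piece $\tfrac12 Q$. First I would verify that $\hess F^{(\xi)}_N \geq \mu - 1$ on all of $\mb R^{N-1}$: indeed $\hess(\tfrac14 P_4 + \tfrac32\xi^2 P_2) \geq 0$ since both summands are convex, $\hess(\tfrac12 P_2 + \tfrac12 Q)$ is the $N\times N-1$ compression of $K$ restricted to $\mc C^\perp$, which by the Poincar\'e inequality~\eqref{discretePoincare} (valid because $\nu_0=0$ is the only zero eigenvalue and the next is $\mu$) is $\geq \mu$, and the cubic term $\xi P_3$ — whose Hessian is $6\xi\,\mathrm{diag}((Ay)_k)$ applied through $A$ — must be absorbed. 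This last point is the delicate one and I address it below. Granting the lower bound $\hess F^{(\xi)}_N \geq \mu - 1 > 0$, the Bakry-\'Emery criterion (Proposition~\ref{BEcriterion}) gives that the probability measure proportional to $e^{-F^{(\xi)}_N/(hN)}dy$ satisfies a log-Sobolev inequality. After rescaling $y \mapsto \sqrt{hN}\,y$, which turns the Dirichlet form $hN\int|\nabla\phi|^2 e^{-F/(hN)}$ into the standard form $\int|\nabla\phi|^2 dm$ with $m \propto e^{-F^{(\xi)}_N(\sqrt{hN}y)/(hN)}dy$, the Hessian of the rescaled potential is $\hess F^{(\xi)}_N$ evaluated at $\sqrt{hN}y$, still $\geq \mu-1$, so $m$ has log-Sobolev constant $\rho = \mu - 1$.

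Next I would set up the NGS Bound with $M \equiv 0$, $\rho = \mu - 1$, and $\Omega = \{P_2(y) \geq NR^2\}$ in the rescaled coordinates, i.e. $\{P_2(y) \geq R^2/h\}$ after unwinding the scaling. Since $M\equiv 0$, the quantity $\int_\Omega e^{-2M/\rho}dm = m(\Omega)$, and $\Lambda$ can be taken to be this mass, which lies in $(0,1]\subset(0,\Lambda]$ automatically. The NGS Bound~\eqref{NGSboundestimate} then reads
\[
\int |\nabla\phi|^2 dm \geq -\tfrac{\mu-1}{2}\log m(\Omega)\,\int|\phi|^2 dm
\quad\text{for all }\phi\text{ with }\supp\phi\subset\Omega.
\]
Unwinding the rescaling and rewriting $m(\Omega)$ as the ratio of the two Lebesgue integrals appearing in the statement produces exactly the claimed constant $C(h,N,\xi) = \tfrac{\mu-1}{2}\big(\log\int_{\mb R^{N-1}}e^{-F_N^{(\xi)}/(hN)}dy - \log\int_{\{P_2\geq NR^2\}}e^{-F_N^{(\xi)}/(hN)}dy\big)$; the sign works out because $m(\Omega)\leq 1$ makes $-\log m(\Omega)\geq 0$. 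A minor point to check is that $F^{(\xi)}_N$ is coercive enough for all these integrals to be finite, which follows from the quartic growth of $P_4$.

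The main obstacle is the cubic term $\xi P_3(y) = \xi\sum_k (Ay)_k^3$, whose Hessian $6\xi\sum_k (Ay)_k\,(A e_i)\cdot(A e_j)$ is \emph{not} sign-definite and for large $|\xi|$ and large $|y|$ could in principle dominate. The resolution exploits the pairing with the quartic: by Young's inequality, for any $\delta>0$ one has a pointwise bound of the schematic form $|6\xi (Ay)_k| \leq \delta (Ay)_k^2 + C_\delta \xi^2$, so that $6\xi\,\mathrm{diag}((Ay)_k) \geq -\delta\,\mathrm{diag}((Ay)_k^2) - C_\delta\xi^2 I$ as quadratic forms in the $(Ay)$-variables; the first term is then absorbed into $\hess(\tfrac14 P_4) = 3\,\mathrm{diag}((Ay)_k^2)$ (pushed through $A$) by taking $\delta \leq 3$, while the constant term $-C_\delta\xi^2$ is absorbed into $\hess(\tfrac32\xi^2 P_2) = 3\xi^2 I$ (through $A$) provided $C_\delta \leq 3$. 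Choosing $\delta$ and $C_\delta$ compatibly (which is possible since one has freedom in the Young splitting) leaves $\hess F^{(\xi)}_N \geq \hess(\tfrac12 P_2 + \tfrac12 Q) \geq \mu$, hence certainly $\geq \mu - 1$, uniformly in $\xi\in\mb R$ and $N\in\mb N$ — this uniformity in both parameters is precisely what the whole infinite-dimensional strategy requires, and it is gratifying that the Bakry-\'Emery constant here is $\xi$- and $N$-independent. Once this convexity estimate is in hand, the rest is the routine bookkeeping sketched above.
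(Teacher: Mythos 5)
Your proposal follows the paper's proof exactly: establish $\hess F^{(\xi)}_N \geq \mu - 1$ uniformly in $y$ and $\xi$ using the Poincar\'e inequality~\eqref{discretePoincare}, invoke Bakry--\'Emery for the log-Sobolev constant, then apply the NGS Bound with $M\equiv 0$ and $\Omega=\{P_2\geq NR^2\}$. Two small points. First, your Hessian bookkeeping has a slip: you split off ``$\hess(\tfrac12 P_2 + \tfrac12 Q)$'', but $F^{(\xi)}_N$ contains only $\tfrac12 Q$ (whose Hessian is $A^T K A - I \geq (\mu-1)I$), not $\tfrac12 P_2 + \tfrac12 Q$; so the correct intermediate bound is $\hess F^{(\xi)}_N \geq \mu - 1$, not $\geq\mu$. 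Since your conclusion immediately retreats to $\mu-1$ this does not affect the result, but as written the inequality ``$\hess F^{(\xi)}_N \geq \hess(\tfrac12 P_2 + \tfrac12 Q)$'' is false. Second, the rescaling $y\mapsto\sqrt{hN}\,y$ and the Young's-inequality absorption of the cubic are unnecessary detours: the paper applies Bakry--\'Emery directly to the measure proportional to $e^{-F^{(\xi)}_N/(hN)}dy$ (getting constant $(\mu-1)/(hN)$, which cancels the $hN$ in the Dirichlet form), and the quartic, cubic, and $3\xi^2 P_2$ terms have Hessian $3A^T\mathrm{diag}((z_k+\xi)^2)A\geq 0$ by completing the square, without any $\delta$-splitting. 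These are cosmetic; the substance of your argument is right.
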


\begin{proof}
It follows from the Poincar\'e inequality~\eqref{discretePoincare}  
 that the Hessian of $y\mapsto F_N^{(\xi)} (y)$ is strictly positive, uniformly in $y$ and $\xi$. More precisely, 
\[     \hess F_N^{(\xi)} (y)    \geq    \mu -1        \  \  \         \forall \xi \in \mb R, y\in \mb R^{N-1}   .  \]
It follows then from the Bakry-\'Emery criterion (Proposition~\ref{BEcriterion}) that for each $\xi\in \mb R$ the probability measure
on $\mb R^{N-1}$  with density proportional to $\exp{(- \tfrac{ F_N^{(\xi)} }{h N})}$ satisfies a logarithmic Sobolev inequality with constant $\tfrac{\mu-1}{hN}$. Thus the claim follows by applying
for each $\xi\in \mb R$ to this probability measure
the NGS Bound (Proposition~\ref{NGSbound}) with $M=0$ and 
$\Omega = \{ P_2 \geq NR^2\}$. 
\end{proof}

\noindent
In order to estimate the constant $C(h, N, \xi)$ defined in Prop.~\ref{propNGSawayfromdiagonal} we shall use the following two simple lemmata, which give uniform estimates on suitable Gaussian integrals. 
We define for $t> -(\mu -1)$, $h>0$ and $N\in \mb N$
\[    Z_{h, N}(t)  :=      \int_{\mb R^{N-1}} e^{- \tfrac{t P_2(y) + Q(y)}{2 hN}} dy   =    h^{\tfrac{N-1}{2}}Z_{1, N}(t)     , \]
and write for short $Z_{N}(t)  := Z_{1, N}(t) $   . 

\begin{lemma} \label{Lemma4moment}
Fix $t_0> -(\mu -1)$. Then there exists a constant $C= C(t_0)>0$ such that for all $t\geq t_0$ and for all $N\in \mb N$ it holds 
\[\tfrac{1}{Z_{N} (t)}  \int_{\mb R^{N-1}}   \tfrac 1 N P_4 (y) e^{-\tfrac{
 t P_2(y)   +    Q(y)}{2N}}     dy   \leq   C.        \]
\end{lemma}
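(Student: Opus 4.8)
The plan is to recognize the normalized integral as an expectation under a Gaussian measure whose covariance is controlled, uniformly in the dimension, by Lemma~\ref{remarkseries}.

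First I would simplify the quadratic form appearing in the exponent. Since the coordinates $y$ were chosen orthonormal we have $P_2(y)=|y|^2=\sum_k (Ay)_k^2$, and by definition $Q(y)=\lp Ay,KAy\rp-P_2(y)$, whence
\[
tP_2(y)+Q(y)=(t-1)|y|^2+\lp Ay,KAy\rp .
\]
Pushing forward under the linear isometry $y\mapsto x:=Ay$, whose image is the mean-zero subspace $\mc C^{\perp}$ and whose Jacobian is $1$, the probability measure $\tfrac{1}{Z_N(t)}\,e^{-\frac{tP_2(y)+Q(y)}{2N}}\,dy$ becomes the centered Gaussian measure $m_{t,N}$ on $\mc C^{\perp}$ with density proportional to $\exp\big(-\tfrac1{2N}\lp x,((t-1)\mathrm{Id}+K)x\rp\big)$; equivalently, $m_{t,N}$ has covariance operator $N\,\big((t-1)\mathrm{Id}+K\big)^{-1}$ on $\mc C^{\perp}$. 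Since $t\ge t_0>1-\mu$ and, for $N\ge2$, the smallest non-zero eigenvalue of $K$ equals $\mu$, the operator $(t-1)\mathrm{Id}+K$ is positive definite on $\mc C^{\perp}$, with eigenvalues $t-1+\nu_k$, $k=1,\dots,N-1$, each $\ge t_0-1+\mu>0$; in particular $m_{t,N}$ is a well-defined probability measure. (The case $N=1$ is trivial, all the polynomials $P_2,P_4,Q$ vanishing identically.)

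Next I would compute the one-site second moment. The quadratic form $(t-1)|x|^2+\lp x,Kx\rp$, the subspace $\mc C^{\perp}$, and hence the Gaussian $m_{t,N}$, are all invariant under the cyclic shift of the $N$ coordinates; consequently $\mb E_{m_{t,N}}[x_j^2]$ does not depend on $j$ and therefore equals its average over $j=1,\dots,N$, i.e.
\[
\mb E_{m_{t,N}}[x_j^2]=\tfrac1N\,\mathrm{tr}\Big(N\big((t-1)\mathrm{Id}+K\big)^{-1}\big|_{\mc C^{\perp}}\Big)=\sum_{k=1}^{N-1}\frac{1}{t-1+\nu_k}.
\]
Because $t-1\ge t_0-1>-\mu$ and $\nu_k\ge0$, monotonicity of $\alpha\mapsto(\nu_k+\alpha)^{-1}$ followed by Lemma~\ref{remarkseries} (which applies since $t_0-1>-\mu$) yields
\[
\mb E_{m_{t,N}}[x_j^2]\ \le\ \sum_{k=1}^{N-1}\frac{1}{(t_0-1)+\nu_k}\ \le\ \gamma(t_0-1),
\]
uniformly in $j\in\{1,\dots,N\}$, $N\in\mb N$ and $t\ge t_0$.

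Finally, since each coordinate $x_j$ is a centered one-dimensional Gaussian under $m_{t,N}$, we have $\mb E_{m_{t,N}}[x_j^4]=3\big(\mb E_{m_{t,N}}[x_j^2]\big)^2\le3\,\gamma(t_0-1)^2$. Summing over $j$ and dividing by $N$,
\[
\frac{1}{Z_N(t)}\int_{\mb R^{N-1}}\tfrac1N P_4(y)\,e^{-\frac{tP_2(y)+Q(y)}{2N}}\,dy=\frac1N\sum_{j=1}^N\mb E_{m_{t,N}}[x_j^4]\ \le\ 3\,\gamma(t_0-1)^2=:C(t_0),
\]
which is the desired estimate. I expect the only slightly delicate point — not really an obstacle — to be the reduction, via the cyclic symmetry, of the individual one-site variance $\mb E_{m_{t,N}}[x_j^2]$ to the $N$-uniformly bounded trace sum $\sum_k(t-1+\nu_k)^{-1}$: this is essential because bounding only the average of the $\mb E_{m_{t,N}}[x_j^2]$ would be insufficient for $\tfrac1N P_4$, which is built from the squares of those variances.
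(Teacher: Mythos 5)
Your proof is correct, and it reaches the same exact quantity $3\bigl(\sum_{k=1}^{N-1}(\nu_k-1+t)^{-1}\bigr)^2$ as the paper does, but by a cleaner and more direct route. The paper's proof instead embeds the $(N-1)$-dimensional problem into a full $N$-dimensional Gaussian by adjoining an independent standard Gaussian $\xi$ on the diagonal direction, expands $\tfrac{1}{N}\sum_k(\xi+y_k)^4$ by the binomial theorem, and then solves the resulting identity
\[
3\Bigl(1+\sum_{k=1}^{N-1}\tfrac{1}{\nu_k-1+t}\Bigr)^2 \;=\; 3 + 6\,\mb E_{g_t}\bigl[\tfrac1N P_2\bigr] + \mb E_{g_t}\bigl[\tfrac1N P_4\bigr]
\]
for $\mb E_{g_t}[\tfrac1N P_4]$, which after cancellation reduces precisely to $3S^2$ with $S:=\sum_k(\nu_k-1+t)^{-1}$. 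Your argument bypasses this auxiliary integral and the subsequent algebra entirely: you observe that the pushed-forward measure on $\mc C^\perp$ is a \emph{circulant} Gaussian (invariant under cyclic shifts), so every one-site variance is constant and equal to $\tfrac1N$ times the trace of the covariance, which is $S$; the one-dimensional Gaussian fourth-moment identity then gives $\mb E[x_j^4]=3S^2$ coordinate by coordinate. This makes the underlying mechanism (shift invariance plus Wick's theorem) explicit, where the paper's computation keeps it implicit in the phrase ``an explicit computation of Gaussian 4th order moments.'' Your remark at the end correctly identifies why the pointwise (rather than averaged) control of $\mb E[x_j^2]$ matters: $P_4$ involves fourth powers, so an $L^1$ bound on the variances alone would not suffice, and it is precisely the circulant symmetry that upgrades the average to a pointwise bound. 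The estimate $\sum_k(\nu_k-1+t)^{-1}\le\gamma(t_0-1)$ via monotonicity in $t$ and Lemma~\ref{remarkseries} matches the paper's final step. No gaps.
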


\begin{proof}
Fix $t_0> -(\mu-1)$ and let 
$\nu_0, \dots, \nu_{N-1}$ be the eigenvalues of $K$ given by~\eqref{eigenvaluesK}. 
We denote for short 
for each $t\geq t_0$ by $g_t(dy)$ the Gaussian measure 
on $\mb R^{N-1}$ given by
$\tfrac{1}{Z_{N} (t)}   e^{-\tfrac{
 t P_2(y)   +    Q(y)}{2N}}     dy $. 
 Then for each $t\geq t_0$ and $N\in \mb N$
 an explicit computation of Gaussian 4th order moments gives
  \begin{gather*}
  3\left( 1 + \sum_{k=1}^{N-1}   \frac{1}{\nu_k -1 +t }\right)^2 = 
  \tfrac{1}{Z_{N} (t) \sqrt{2\pi N}}  \int_{\mb R^N}
  \tfrac 1N \sum_k x_k^4 \, e^{- \tfrac{\lp (K-1 +t)x,x \rp}{2N} - \tfrac{(2-t)\overline x^2}{2}}  dx  = \\ 
   \int_{\mb R} \left(   \int_{\mb R^{N-1}}  
  \tfrac 1N \sum_k (\xi + y_k)_k^4   \, g_t(dy) \right) 
   \tfrac{1}{ \sqrt{2\pi }} e^{- \tfrac{\xi^2}{2}} 
   d\xi \,   
  =   \\ 
    \tfrac{1}{ \sqrt{2\pi}}  \int_{\mb R}   \xi^4 e^{- \tfrac{\xi^2}{2}} \, d\xi 
    +    \tfrac{6}{Z_{N} (t)}  \int_{\mb R^{N-1}}   \tfrac 1 N P_2 (y) \, g_t(dy)    
       \  \tfrac{1}{ \sqrt{2\pi}}  \int_{\mb R}   \xi^2 e^{- \tfrac{\xi^2}{2}} \, d\xi  
    +      \int_{\mb R^{N-1}}   \tfrac 1 N P_4 (y) \, g_t(dy)          =   \\  
     3   + 6  \int_{\mb R^{N-1}}   \tfrac 1 N P_2 (y) \, g_t(dy)    
    +    \int_{\mb R^{N-1}}   \tfrac 1 N P_4 (y) \, g_t(dy)        .  
  \end{gather*}
  Since
  \[   \int_{\mb R^{N-1}}   \tfrac 1 N P_2 (y) \, g_t(dy)   =  
  \sum_{k=1}^{N-1}   \frac{1}{\nu_k -1 +t } ,  \]
 we obtain
\begin{gather*}  \int_{\mb R^{N-1}}   \tfrac 1 N P_4 (y) \, g_t(dy)     = 
3\left( 1 + \sum_{k=1}^{N-1}   \frac{1}{\nu_k -1 +t }\right)^2  - 3 - 6  \sum_{k=1}^{N-1}   \frac{1}{\nu_k -1 +t }    \leq   \\
 3\left( 1 + \sum_{k=1}^{N-1}   \frac{1}{\nu_k -1 +t_0 }\right)^2,
\end{gather*} 
 which finishes the proof by Lemma~\ref{remarkseries}.    
\end{proof}
\begin{lemma} \label{lemmaRatioNorm}  For every $t_0 > - (\mu -1)$ there exists a $\gamma(t_0) >0$ 
such that for all $N\in \mb N$ and for all $t  >  - (\mu -1) $ it holds
\[       \frac{Z_N(t)}{Z_N(t_0)}     \geq      e^{  -    \tfrac{\gamma (t_0)|t- t_0| }{ 2}  }       .      \] 
\end{lemma}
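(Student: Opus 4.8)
The plan is to evaluate $Z_N(t)$ explicitly as a Gaussian integral and then reduce the inequality to the elementary estimate $\log(1+u)\le u$ combined with Lemma~\ref{remarkseries}.

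First I would diagonalise the quadratic form appearing in the exponent. Writing $x=Ay\in\mc C^{\perp}$ and recalling $\lp x,Kx\rp=\tfrac{\mu}{4\sin^2(\pi/N)}\sum_k(x_k-x_{k+1})^2$, one has $tP_2(y)+Q(y)=\lp Ay,KAy\rp+(t-1)P_2(y)=\lp y,(A^{\mathsf T}KA+(t-1)I)y\rp$, where $A^{\mathsf T}KA$ has eigenvalues $\nu_1,\dots,\nu_{N-1}$ (all $\ge\mu$) because $A$ is an isometry onto $\mc C^{\perp}$ and $P_2(y)=\lp y,y\rp$. This form is positive definite exactly when $\nu_k+t-1>0$ for all $k$, i.e. when $t>-(\mu-1)$, using $\nu_k\ge\mu$. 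Evaluating the Gaussian integral gives
\[
Z_N(t)=(2\pi N)^{\frac{N-1}{2}}\prod_{k=1}^{N-1}(\nu_k+t-1)^{-\frac12},
\]
which is finite precisely under the standing assumption on $t$.

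Next I would take the logarithm of the ratio:
\[
-\log\frac{Z_N(t)}{Z_N(t_0)}=\frac12\sum_{k=1}^{N-1}\log\!\Big(1+\frac{t-t_0}{\nu_k+t_0-1}\Big),
\]
where each logarithm is well defined because $1+\tfrac{t-t_0}{\nu_k+t_0-1}=\tfrac{\nu_k+t-1}{\nu_k+t_0-1}>0$ for every $t>-(\mu-1)$. Then I split into two cases. If $t\le t_0$ every summand is $\le 0$, so the left-hand side is $\le 0$ and the claimed bound holds for any $\gamma(t_0)>0$. If $t\ge t_0$, I use $\log(1+u)\le u$ for $u\ge 0$ and then Lemma~\ref{remarkseries} with $\alpha=t_0-1>-\mu$ to obtain
\[
-\log\frac{Z_N(t)}{Z_N(t_0)}\le\frac{t-t_0}{2}\sum_{k=1}^{N-1}\frac{1}{\nu_k+t_0-1}\le\frac{\gamma(t_0-1)}{2}\,(t-t_0),
\]
uniformly in $N$. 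Setting $\gamma(t_0):=\gamma(t_0-1)$ and exponentiating yields the statement in both cases.

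There is no real obstacle here: the only point needing a little care is verifying that the arguments of all the logarithms stay strictly positive uniformly in $k$ and $N$, which is immediate from $\nu_k\ge\mu$ and $t,t_0>-(\mu-1)$; everything else is the explicit Gaussian computation together with the already-established Lemma~\ref{remarkseries}.
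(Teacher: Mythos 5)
Your proof is correct and follows essentially the same route as the paper: diagonalise the Gaussian integral to get $Z_N(t)=(2\pi N)^{(N-1)/2}\prod_k(\nu_k+t-1)^{-1/2}$, take logarithms, and apply $\log(1+u)\le u$ together with Lemma~\ref{remarkseries} at $\alpha=t_0-1$. The only cosmetic difference is that you split into the cases $t\le t_0$ and $t\ge t_0$, whereas the paper handles both at once by noting that $-(t-t_0)\sum_k\tfrac{1}{\nu_k-1+t_0}\ge -|t-t_0|\gamma(t_0)$ regardless of the sign of $t-t_0$, since the sum is nonnegative.
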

\begin{proof} 
Fix $t_0>  - (\mu -1)$ and 
let 
$\nu_0, \dots, \nu_{N-1}$ be the eigenvalues of $K$ given by~\eqref{eigenvaluesK}. 
 Then, using $\log(1+x) \leq x$ for $x> -1$, 
one gets for all $N\in \mb N$ and for all $t  >  - (\mu -1) $ 
\begin{gather*}
 2 \log \left(\frac{Z_N(t)}{Z_N(t_0)} \right) =  
   - \sum_{k=1}^{N-1} \log  \left(\frac{\nu_k -1 + t}{\nu_k -1 + t_0}\right)
  =   \\
  -  \sum_{k=1}^{N-1} \log  \left(
  1     +    \frac{t - t_0}{\nu_k -1 + t_0}\right)     \geq 
   -  (t- t_0)\sum_{k=1}^{N-1}
   \frac{1}{\nu_k -1 + t_0}     \geq - |t-t_0| \gamma(t_0) ,   
\end{gather*}
where $\gamma(t_0)$ satisfies 
$\sum_{k=1}^{N-1}
   \frac{1}{\nu_k -1 + t_0}  \leq \gamma(t_0)$
   for all $N\in \mb N$ (see Lemma~\ref{remarkseries}). 
\end{proof}
\noindent
The next proposition provides an estimate on the constant $C(h, N, \xi)$ defined in Prop.~\ref{propNGSawayfromdiagonal}. 
\begin{proposition} \label{propcompofNGSconstant}
Fix $R>0$. Then there exist constants $C, h_0>0$ such 
that for every $h\in(0, h_0]$,  $N\in \mb N$ and $\xi\in \mb R$
it holds 
\[   \log  \int_{\mb R^{N-1}}  e^{-\tfrac{F_N^{(\xi)}(y)}{hN}}    dy 
-   \log \int_{\{P_2 \geq NR^2\}}
e^{-\tfrac{F_N^{(\xi)}(y)}{hN}}     dy         \geq   \tfrac C h   .  \]
\end{proposition}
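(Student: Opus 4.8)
The plan is to bound the two integrals separately, expressing each in terms of the single Gaussian normalisation $Z_{h,N}(0)=\int_{\mb R^{N-1}}e^{-Q(y)/(2hN)}\,dy$, so that the dependence on the dimension $N$ cancels when we divide. Write $g_0$ for the centered Gaussian probability measure on $\mb R^{N-1}$ with density $Z_{h,N}(0)^{-1}e^{-Q/(2hN)}$. Since, as a quadratic form in $y$, $Q$ has eigenvalues $\nu_1-1,\dots,\nu_{N-1}-1$ (all $\ge\mu-1>0$), the trace of its covariance gives $\int P_2\,dg_0=hN\sum_{k=1}^{N-1}(\nu_k-1)^{-1}\le hN\,\gamma(-1)$ by Lemma~\ref{remarkseries} with $\alpha=-1$; moreover $\int P_3\,dg_0=0$ since $g_0$ is centered and $P_3$ is odd, and $\int P_4\,dg_0\le h^{2}N\,C(0)$ by Lemma~\ref{Lemma4moment} with $t_0=0$, after the scaling $y\mapsto\sqrt h\,y$ relating $g_0$ to its temperature-$1$ version.

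The elementary input is the pointwise inequality
\[ \tfrac14 t^{4}+\xi t^{3}+\tfrac32\xi^{2}t^{2}\ \ge\ \tfrac1{20}\bigl(t^{4}+\xi^{2}t^{2}\bigr)\qquad(t,\xi\in\mb R), \]
which holds because the quadratic form $(t,\xi)\mapsto\tfrac15 t^{2}+\xi t+\tfrac{29}{20}\xi^{2}$ is positive definite. Summing it over $t=(Ay)_{k}$ gives $\tfrac14 P_4+\xi P_3+\tfrac32\xi^{2}P_2\ge\tfrac1{20}(P_4+\xi^{2}P_2)$, hence $F^{(\xi)}_N(y)\ge\tfrac1{20}\bigl(P_4(y)+\xi^{2}P_2(y)\bigr)+\tfrac12 Q(y)$ for all $y$. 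On $\{P_2\ge NR^{2}\}$ the Cauchy--Schwarz bound $P_4\ge P_2^{2}/N$ then yields $\tfrac1{20}P_4\ge\tfrac{NR^{4}}{20}$ and $\tfrac1{20}\xi^{2}P_2\ge\tfrac{\xi^{2}NR^{2}}{20}$, so that $F^{(\xi)}_N(y)\ge\tfrac{(R^{4}+\xi^{2}R^{2})N}{20}+\tfrac12 Q(y)$ there; bounding the integrand and enlarging the domain back to $\mb R^{N-1}$ gives
\[ \int_{\{P_2\ge NR^{2}\}}e^{-F^{(\xi)}_N/(hN)}\,dy\ \le\ e^{-\frac{R^{4}+\xi^{2}R^{2}}{20h}}\,Z_{h,N}(0). \]
The decisive feature is that the exponentially small factor comes entirely from the quartic term $P_4$, so the Gaussian weight $e^{-Q/(2hN)}$ is left intact.

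For the matching lower bound I would use the factorisation $\int_{\mb R^{N-1}}e^{-F^{(\xi)}_N/(hN)}\,dy=Z_{h,N}(0)\int\exp\!\bigl(-\tfrac1{hN}(\tfrac14 P_4+\xi P_3+\tfrac32\xi^{2}P_2)\bigr)\,dg_0$, Jensen's inequality, and the three moment bounds above, which give
\[ \int_{\mb R^{N-1}}e^{-F^{(\xi)}_N/(hN)}\,dy\ \ge\ Z_{h,N}(0)\,\exp\!\Bigl(-\tfrac{hC(0)}{4}-\tfrac32\gamma(-1)\,\xi^{2}\Bigr). \]
Dividing the two displays, $Z_{h,N}(0)$ cancels and the quantity in the statement is bounded below by $\tfrac{R^{4}}{20h}+\xi^{2}\bigl(\tfrac{R^{2}}{20h}-\tfrac32\gamma(-1)\bigr)-\tfrac{hC(0)}{4}$. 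It then suffices to pick $h_0>0$, depending only on $R$ and $\mu$, so small that $\tfrac{R^{2}}{20h}\ge\tfrac32\gamma(-1)$ and $\tfrac{hC(0)}{4}\le\tfrac{R^{4}}{40h}$ whenever $h\le h_0$; the right-hand side is then $\ge\tfrac{R^{4}}{40h}$, which is the claim with $C:=R^{4}/40$.

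The main obstacle is uniformity in $N$: any route that produces the exponential smallness of the left-hand integral by \emph{degrading} the quadratic form $Q$ — say by absorbing a fixed multiple of $P_2$ into $\langle\,\cdot\,,K\,\cdot\,\rangle$ via~\eqref{discretePoincare}, or by passing to an effective temperature larger than $h$ — incurs a ratio of Gaussian normalising constants of the form $c^{\,N}$ with $c>1$, which destroys the estimate; running both integrals through the \emph{same} quantity $Z_{h,N}(0)$ is precisely what avoids this. The residual factor $e^{\frac32\gamma(-1)\xi^{2}}$ coming from $\int\tfrac32\xi^{2}P_2\,dg_0$ in the lower bound is harmless because, once $h$ is small, it is cancelled by the matching factor $e^{-\xi^{2}R^{2}/(20h)}$ extracted from the $\xi^{2}P_2$ term on $\{P_2\ge NR^{2}\}$.
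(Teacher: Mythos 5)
Your proof is correct, and it takes a genuinely different—and in one respect cleaner—route than the paper's.

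The paper handles the cubic term $\xi P_3$ differently in the two bounds: for the lower bound on the full integral it uses the symmetry of $e^{-(F^{(\xi)}_N-\xi P_3)/(hN)}dy$ together with Jensen to kill $\xi P_3$, and for the tail bound it absorbs $\xi P_3$ by AM-GM ($\xi t^3\ge -\xi^2t^2-\tfrac14 t^4$), which gives $F^{(\xi)}_N\ge\tfrac12\xi^2P_2+\tfrac12Q$. These steps naturally land on two \emph{different} reference Gaussians, $Z_{h,N}(3\xi^2)$ (from Jensen plus the linearization $e^{-t}\ge1-t$ on the quartic piece) and $Z_{h,N}(-\delta)$ (from splitting off a $\delta P_2$); a separate estimate, Lemma~\ref{lemmaRatioNorm}, is then needed to show the ratio $Z_{h,N}(3\xi^2)/Z_{h,N}(-\delta)$ is controlled uniformly in $N$. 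Your variant replaces all of this with the single pointwise inequality $\tfrac14t^4+\xi t^3+\tfrac32\xi^2t^2\ge\tfrac1{20}(t^4+\xi^2t^2)$ (correct: the associated form $\tfrac15t^2+\xi t+\tfrac{29}{20}\xi^2$ has discriminant $1-4\cdot\tfrac{29}{100}<0$), which extracts strictly positive multiples of $P_4$ and $\xi^2P_2$ while leaving $\tfrac12Q$ untouched. Consequently both your upper tail bound (after Cauchy--Schwarz $P_4\ge P_2^2/N$) and your lower bound (Jensen against $g_0$, with moments controlled by Lemma~\ref{remarkseries} at $\alpha=-1$, the vanishing of the odd $P_3$ moment, and the $h^2$-scaled Lemma~\ref{Lemma4moment} at $t_0=0$) are expressed against the \emph{same} normalization $Z_{h,N}(0)$, which then cancels exactly. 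This removes the need for Lemma~\ref{lemmaRatioNorm} altogether. The resulting constant in the lower bound, $C=R^4/40$, is of course different from the paper's (which is of order $R^2\delta$), but both are of order $1/h$, which is all that the statement requires. The only slight loss of flexibility is that your pointwise inequality relies on the specific coefficients $\tfrac14,\tfrac32$ of the quartic/quadratic terms, whereas the paper's AM-GM step is somewhat more modular; for this model that is immaterial.
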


\begin{proof} Fix $R>0$. We first show that 
there exists   a constant  $C'>0$ such that for all 
$h>0,  N\in \mb N$ and $\xi \in \mb R$ it holds 
\begin{equation}\label{laplaceintegralforf}    \int_{\mb R^{N-1}} e^{-\tfrac{F_N^{(\xi)}(y)}{hN}}     dy     \geq  
  \left(    1   -     h   C'   \right) Z_{h, N} (3\xi^2).    
  \end{equation}
 In order to do so we write for short 
\[  \tilde Z_{h, N}(3\xi^2)   :=
\int_{\mb R^{N-1}} e^{- \tfrac{F^{(\xi)}_N(y) - \xi P_3(y)}{ hN}} dy  ,     \]
and   $\mb P^{(\xi)}_{h, N}$ for the probability measure on $\mb R^{N-1}$ with density 
$ e^{- \tfrac{F^{(\xi)}_N(y) - \xi P_3(y)}{ hN}} / \tilde Z_{h, N}(3\xi^2) $.
It follows then from Jensen's inequality, the symmetry of 
$\mb P^{(\xi)}_{h, N}$ and the antisymmetry of $P_3$ that 
\begin{gather*}
\int_{\mb R^{N-1}} e^{-\tfrac{F^{(\xi)}_N(y)}{hN}}     dy      =   
\tilde Z_{h, N}(3\xi^2)   \int_{\mb R^{N-1}} 
e^{-\tfrac{\xi P_3 (y)}{hN}}     \mb P_{h, N}(dy)  \geq   \\
   \tilde Z_{h, N}(3\xi^2)   \ 
e^{-   \tfrac{\xi}{hN}  \int_{\mb R^{N-1}}  P_3 (y)   \mb P_{h, N}(dy)} 
    =       \tilde Z_{h, N}(3\xi^2)   .  
\end{gather*}
Moreover, using for the quartic term the inequality $e^{-t}\geq 1-t$, $t\in \mb R$, one gets
\begin{gather*}   \tilde Z_{h, N}(3\xi^2)    =   
\int_{\mb R^{N-1}}    e^{- \tfrac{ P_4(y)}{4 hN}}
e^{-\tfrac{
 3\xi^2 P_2(y)   +    Q(y)}{2h N}}      dy     \geq    \\
 \int_{\mb R^{N-1}}   
e^{-\tfrac{
 3\xi^2 P_2(y)   +    Q(y)}{2h N}}      dy   
 -    \tfrac{1}{4 hN } \int_{\mb R^{N-1}}    P_4(y) 
e^{-\tfrac{
 3\xi^2 P_2(y)   +    Q(y)}{2h N}}      dy       =   \\
 Z_{h,N}(3\xi^2)   \left(    1  -    
  h\tfrac{1}{4 N } \int_{\mb R^{N-1}}    P_4(y) 
e^{-\tfrac{
 3\xi^2 P_2(y)   +    Q(y)}{2 N}}   Z^{-1}_{1,N}(3\xi^2)         dy        \right) 
   .   
 \end{gather*}
The claim~\eqref{laplaceintegralforf} follows then by applying Lemma~\ref{Lemma4moment}  with $t_0=0$ and $t= 3\xi^2$.

 \

 \noindent
 As second step  we pick a $\delta\in (0, \mu-1)$ and show that
 for all $\xi\in \mb R$, $h>0$
and $N\in \mb N$ the estimate
\begin{equation}\label{TailestimateF}
\int_{\{P_2 \geq NR^2\}}
e^{-\tfrac{F_N^{(\xi)}(y)}{hN}}     dy     \leq   
 e^{- \tfrac{\xi^2+ \delta}{2h} R^2}   Z_{h, N}(-\delta)      
 \end{equation}
 holds true. 
Indeed, using with $\alpha=2$ the inequality
 $\xi t^3 \leq \tfrac{\alpha \xi^2 t}{2} + \tfrac{t^4}{2\alpha}$, 
 valid for every $\xi, t\in \mb R$ and $\alpha>0$, one gets 
 for every $N\in \mb N$, $\xi \in \mb R$ and $y\in \mb R^{N-1}$
 the lower bound
 \[   F^{(\xi)}_N (y)    \geq \tfrac 12 \xi^2 P_2(y) + \tfrac 12 Q(y)   . \]
 It follows that for all $\xi\in \mb R$, $h>0$
and $N\in \mb N$ it holds
\begin{gather*}\int_{\{P_2 \geq NR^2\}}
e^{-\tfrac{F_N^{(\xi)}(y)}{hN}}     dy     \leq  
\int_{\{P_2 \geq NR^2\}} 
e^{-\tfrac{ (\xi^2 + \delta) P_2(y) }{2hN}}  e^{-\tfrac{ Q(y) - \delta P_2(y)}{2hN}}       dy  \leq  \\ 
 e^{- \tfrac{\xi^2+ \delta}{2h} R^2}   Z_{h, N}(-\delta)   ,   
 \end{gather*}
 i.e. \eqref{TailestimateF} is proven. 

 \
 
 \noindent
 To finish the proof of the proposition we observe that 
 it follows  from~\eqref{laplaceintegralforf}, ~\eqref{TailestimateF} and Lemma~\ref{lemmaRatioNorm} that there exists 
constants $ C', \delta'>0$ such that, chosing $ h'_0 \in (0,
\tfrac {1}{2 C'})$, 
 for every $h\in(0,  h'_0]$,  $N\in \mb N$ and $\xi\in \mb R$
 it holds
\begin{gather*}
 \log  \int_{\mb R^{N-1}}  e^{-\tfrac{F_N^{(\xi)}(y)}{hN}}    dy 
-   \log \int_{\{P_2(y) \geq NR^2\}}
e^{-\tfrac{F_N^{(\xi)}(y)}{hN}}     dy     \    \geq   \\
     \geq    \   \log \left(    1   -     h    C'   \right)  + 
   \tfrac{\xi^2+ \delta}{2h} R^2    -    \tfrac{\delta'}{2}\left( 3\xi^2 + \delta\right)  
    \   \geq     \\     \geq     \      
    \tfrac 12 \left(  \tfrac {R^2}{h} - 3\delta'\right)
   \xi^2      +    \tfrac 1 h \left( \tfrac {R^2\delta }{2} - 
   h \delta( \tfrac{ \delta'}{2} - \tfrac{\log 2}{\delta}) \right)
     . 
\end{gather*}
It follows that, fixing any $C <  \tfrac{R^2\delta}{2}$, we can find an
$h_0 \in (0, h_0']$ such that the statement of the proposition holds. 
\end{proof}

\noindent
We can now easily complete the proof of Proposition~\ref{MainPropAway}
by integrating over the diagonal: 
\begin{proof}[Proof of Proposition~\ref{MainPropAway}]
Fix $R>0$.
It follows from Proposition~\ref{propNGSawayfromdiagonal} and 
Proposition~\ref{propcompofNGSconstant} that
there exists  constants $C, h_0>0$ such that for every $N\in \mb N$, 
every $h\in (0, h_0]$ and every $f\in C_b(\mb R^N)$ with the property that
$ \supp f  \subset   \Big\{x\in \mb R^N:     \tfrac 1N \sum_{k} (x_k - \overline x)^2 \geq  R^2   \Big\} $, denoting 
for each $\xi\in \mb R$ by $f^{(\xi)}$ the function 
$y\mapsto  f(x(\xi, y))\in C_b^\infty(\mb R^{N-1)}$
and by $\nabla f^{(\xi)} $ its gradient, it holds  
\begin{gather*}
\mc E_{h,N} [f]     \geq  hN  \int_{\mb R}  \left(  \int_{\mb R^{N-1}}
|\nabla f^{(\xi)}(y)|^2 e^{- \tfrac{F^{(\xi)(y)}}{hN }} dy\right) e^{-\tfrac{1}{4h}(\xi^2-1)^2}  \sqrt N d\xi   \geq \\
  \tfrac C h \int_{\mb R}  \left(  \int_{\mb R^{N-1}}
| f^{(\xi)}(y)|^2 e^{- \tfrac{F^{(\xi)(y)}}{hN }} dy\right) e^{-\tfrac{1}{4h}(\xi^2-1)^2}  \sqrt N d\xi      =   
 \tfrac{C}{h}    \int_{\mb R^N}  f^2 e^{-\tfrac{V_N}{hN}}  dx      . 
\end{gather*}

\end{proof}

\noindent

\subsection{Around the diagonal: Proof of Proposition~\ref{MainPropDiag}}
\label{SectionDiag}

\noindent
In order to prove Proposition~\ref{MainPropDiag} we shall consider another quadratic partition of unity, which permits to 
isolate and treat separately various contributions to $\mc E_{h, N}$: 
the contribution 
coming from a large convexity region of $V_N$ (containing the two local minima $I_+, I_-$), see Proposition~\ref{MainPropDiagMinima} below; the contribution due to a neighbourhood of the saddle point, 
see Proposition~\ref{propsaddle} below; and finally the contribution coming from the small remaining region, see Proposition~\ref{propinflection} below. 

\

\noindent
We consider in the sequel for every $N\in \mb N$ and $R>0$
the strip around the diagonal given by 
\[    \mc S_N(R) :=     \Big\{x\in \mb R^N:     \tfrac 1N \sum_{k} (x_k - \overline x)^2 \leq  R^2   \Big\}  .\]
Moreover we define for all $N\in \mb N, R >0$ and 
$r  \geq 0 $ the sets
\[   \Omega^{\rm min}_N(R,r)    :=  \mc S_N(R)   \cap    
 \Big\{x\in \mb R^N:    |\overline x|    \geq r  \Big\}  ,      \]
 which, for $0\leq r<1$, are neighbourhoods of the two minima $\pm I$
 and the sets 
 \[
   \Omega^{0}_N(R,r)   :=    \mc S_N(R)   \cap  
   \Big\{x\in \mb R^N:   |\overline x | \leq r    \Big\}      ,  \]
 which, for $r>0$, are neighbourhoods of the saddle point $0$. 
   
   \
   
\noindent
As a preliminary step we show in Lemma~\ref{convexity} below that the energy $V_N$ is uniformly convex on suitable sectors containing the two global minima (and thus in particular on 
$\Omega^{\rm min}_N(R,r)$ if $R$ is small enough and $r$ is large enough).  This is a rather straightforward consequence
of the Sobolev inequality given in Lemma~\ref{Sobolevlemma}.   
   
  \begin{lemma}[Convexity around $I_+, I_-$]\label{convexity}
For every $r>\tfrac{1}{\sqrt 3}$ there exist constants $\alpha_0
= \alpha_0(r), C= C(r)>0$
such that for all $N\in \mb N$ and for all $x\in \mb R^N$ 
such that  $|\overline x| \geq r$ and 
$\sum_k (x_k - \overline x)^2 \leq N \alpha_0^2 |\overline x|^2$
one has the bound 
\begin{equation*}     \hess V_N (x)   \geq C     .  
\end{equation*}
In particular there exists an $R_0>0$ such that for every $r>\tfrac{1}{\sqrt 3}$ there exists a constant 
$C(r)>0$ such that 
\[     \hess V_N (x)   \geq C(r)          \  \  \   ,  \  \ \   \forall N\in \mb N
\text{ and } 
\forall x\in\Omega^{\rm min}_N(R_0,r)    .  \]
\end{lemma}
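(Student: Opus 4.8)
The plan is to compute the Hessian of $V_N$ explicitly and show that, in the region described, the Laplacian contribution dominates any loss of convexity coming from the quartic single-site terms. Recall that $V_N(x) = \sum_k \tfrac14(x_k^2-1)^2 + \tfrac12\langle x, Kx\rangle$, so that $\hess V_N(x) = D(x) + K$, where $D(x)$ is the diagonal matrix with entries $(D(x))_{kk} = 3x_k^2 - 1$. The only way $\hess V_N(x)$ can fail to be positive is through sites $k$ where $x_k^2 < \tfrac13$, i.e.\ where $3x_k^2-1 < 0$; at such sites the diagonal entry is bounded below by $-1$. The idea is that if $|\overline x| \geq r > \tfrac{1}{\sqrt3}$ and the fluctuations $x_k - \overline x$ are small in sup-norm relative to $|\overline x|$, then in fact $x_k^2$ is close to $\overline x^2 \geq r^2 > \tfrac13$ at every site, so $D(x)$ is itself bounded below by a positive constant.

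First I would make this quantitative. Choose $\alpha_0 = \alpha_0(r)$ small enough that $|\overline x| \geq r$ and $\sup_k |x_k - \overline x| \leq \alpha_0 |\overline x|$ together force $x_k^2 \geq (1-\alpha_0)^2 \overline x^2 \geq (1-\alpha_0)^2 r^2$, and then pick $\alpha_0$ so small that $3(1-\alpha_0)^2 r^2 - 1 \geq c_0 > 0$ for some constant $c_0 = c_0(r)$ (possible precisely because $3r^2 > 1$). Under these conditions $D(x) \geq c_0 \cdot \mathrm{Id}$, and since $K \geq 0$ we get $\hess V_N(x) = D(x) + K \geq c_0$, giving the claimed bound with $C(r) = c_0$.

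The remaining point is to pass from the sup-norm condition $\sup_k |x_k - \overline x| \leq \alpha_0 |\overline x|$, which is what the argument actually uses, to the $\ell^2$-type hypothesis $\sum_k (x_k - \overline x)^2 \leq N\alpha_0^2 |\overline x|^2$ stated in the lemma. This is exactly where Lemma~\ref{Sobolevlemma} enters: it gives $\langle x, Kx\rangle \geq \tfrac{1}{\gamma(0)} N \sup_k |x_k - \overline x|^2$, while $\langle x, Kx\rangle \leq \sup_k \nu_k \cdot \sum_k (x_k-\overline x)^2$ is false uniformly in $N$ — so instead I would run the comparison the other way, or more simply just restate the convexity using the $\ell^2$ ball and absorb the discrepancy. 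Concretely: the hypothesis $\sum_k (x_k-\overline x)^2 \leq N\alpha_0^2|\overline x|^2$ does \emph{not} by itself control $\sup_k|x_k-\overline x|$, so the honest route is to keep the argument at the level where it works. I would therefore prove the bound $\hess V_N(x) \geq C$ first on the sup-norm sector $\{\,|\overline x|\geq r,\ \sup_k|x_k-\overline x| \leq \alpha_0|\overline x|\,\}$ as above, and then observe that for the \emph{second}, stronger assertion of the lemma we only need convexity on $\Omega^{\mathrm{min}}_N(R_0,r) = \mc S_N(R_0) \cap \{|\overline x|\geq r\}$: on this set $\tfrac1N\sum_k(x_k-\overline x)^2 \leq R_0^2$, so by Lemma~\ref{Sobolevlemma}, $\tfrac{1}{\gamma(0)}\sup_k|x_k-\overline x|^2 \leq \tfrac1N\langle x,Kx\rangle$; but one still must bound $\langle x,Kx\rangle$, which is not available. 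The clean fix, and what I expect the authors do, is to note that on $\mc S_N(R_0)$ one cannot directly control the sup-norm, so instead one shows convexity holds wherever $3x_k^2 - 1 \geq -\varepsilon$ at \emph{all but finitely many, uniformly bounded number of} sites and handles the bad sites via $K$; alternatively, and most simply, one shrinks $R_0$ using Lemma~\ref{Sobolevlemma} in the form $\sup_k|x_k - \overline x|^2 \leq \gamma(0)\,\tfrac1N\langle x,Kx\rangle$ and bounds $\langle x,Kx\rangle$ on the convexity sector itself by a bootstrap.

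The main obstacle, then, is precisely this translation between the $\ell^2$ fluctuation control built into the definition of $\mc S_N(R)$ and the $\ell^\infty$ control that the pointwise convexity estimate $3x_k^2-1 \geq c_0$ genuinely requires — made delicate by the fact that $\sup_k\nu_k \to \infty$ with $N$, so the two norms are not uniformly comparable. Everything else (the explicit Hessian, the choice of $\alpha_0$, the positivity of $K$) is routine. I would present the proof by first establishing the sup-norm-sector statement, then invoking Lemma~\ref{Sobolevlemma} to choose $R_0$ small enough (depending only on $\gamma(0)$ and, through $\alpha_0(r)$, on $r$) that $\mc S_N(R_0)\cap\{|\overline x|\geq r\}$ is contained in the relevant sup-norm sector, which forces a further shrinking of $R_0$ as $r\downarrow \tfrac{1}{\sqrt3}$ — consistent with the lemma's phrasing, where $R_0$ is chosen first but $C(r)$ degenerates as $r\to\tfrac1{\sqrt3}$.
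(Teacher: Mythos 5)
You correctly write $\hess V_N(x) = D(x) + K$ with $D(x)_{kk}=3x_k^2-1$, correctly observe that sup-norm control $\sup_k|x_k-\overline x|\leq\alpha_0|\overline x|$ would make $D(x)$ uniformly positive, and correctly identify the core difficulty: the hypothesis $\sum_k(x_k-\overline x)^2\leq N\alpha_0^2\overline x^2$ is $\ell^2$, and $\ell^2$ control of the fluctuation does \emph{not} give $\ell^\infty$ control uniformly in $N$. But the proposal stops at that point. None of the fixes you sketch is carried out, and the ones that are concrete enough to assess do not close the gap: you cannot bound $\lp x,Kx\rp$ on the sector (it can be of order $N$ there), and Lemma~\ref{Sobolevlemma} applied to $x$ runs in the wrong direction, as you yourself note. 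So as it stands this is an attempted proof of a \emph{different}, weaker lemma (with sup-norm hypothesis), plus an accurate diagnosis of why that weaker lemma does not imply the stated one.

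The paper's resolution is genuinely different, and it is worth absorbing because it is the move you are missing. One never tries to control $\sup_k|x_k-\overline x|$ at all. Instead one fixes a unit test vector $\omega$ and dichotomizes on $\lp K\omega,\omega\rp$. If $\lp K\omega,\omega\rp\geq 1+\ve$, then
\[
\lp\hess V_N(x)\,\omega,\omega\rp
   = 3\textstyle\sum_k x_k^2\omega_k^2 - 1 + \lp K\omega,\omega\rp \;\geq\; \ve
\]
for \emph{all} $x$, since the diagonal contribution is nonnegative; no hypothesis on $x$ is needed. If instead $\lp K\omega,\omega\rp\leq 1+\ve$, then Lemma~\ref{Sobolevlemma} applied to $\omega$ (not to $x$) gives $N\sup_k|\omega_k-\overline\omega|^2\leq\gamma(0)\lp K\omega,\omega\rp\leq\gamma(0)(1+\ve)$, hence uniform control on $\bigl(N\sum_k(\omega_k-\overline\omega)^4\bigr)^{1/2}$. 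One then expands
\[
3\textstyle\sum_k x_k^2\omega_k^2 = 3\overline x^2 + 6\overline x\sum_k(x_k-\overline x)\omega_k^2 + 3\sum_k(x_k-\overline x)^2\omega_k^2,
\]
drops the last (nonnegative) term, splits $\omega_k^2$ via $\omega_k=\overline\omega+(\omega_k-\overline\omega)$, and controls the cross term by Cauchy--Schwarz using precisely the $\ell^2$ hypothesis on $x$ together with the just-obtained $\ell^4$/$\ell^\infty$ bounds on $\omega-\overline\omega$. Choosing $\alpha_0$ small then gives the lower bound $\tfrac{\delta}{4}$ with $\delta=3r^2-1$. The essential idea is to put the Sobolev inequality on the \emph{direction} $\omega$, where the dichotomy makes $\lp K\omega,\omega\rp$ controllable, rather than on the \emph{point} $x$, where nothing controls $\lp x,Kx\rp$. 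The second assertion of the lemma is then a trivial specialization (take $R_0$ with $R_0^2\leq\alpha_0(r)^2r^2$), not a separate argument requiring a different mechanism.
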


\begin{proof}
We note first that for all $N\in \mb N$, $x\in \mb R^N$ and $\omega\in \mb R^N$
such that $\sum_k \omega_k^2 =1$ it holds 
\[    \lp \hess V_N(x) \omega, \omega\rp  =    
3\sum_k x_k^2 \omega^2_k    -1    +      \lp K\omega, \omega\rp      .   \]
We fix $r> \tfrac {1}{\sqrt 3}$, $\ve>0$
 and let $N\in \mb N$ and $\omega\in \mb R^N$
with $\sum_k \omega_k^2 =1$. If 
$\lp K\omega, \omega\rp    \geq 1+\ve$ we use
$3\sum_k x_k^2 \omega^2_k\geq 0$ and  conclude that 
$\lp \hess V_N(x) \omega, \omega\rp      \geq \ve$
for all $x\in \mb R^N$. Thus we can assume in the sequel that 
$\lp K\omega, \omega\rp    \leq 1+\ve$. In this case, using
$\lp K\omega, \omega\rp \geq 0$ and the decomposition
$x =  x + (x-\overline x)$, one gets for every $x\in \mb R^N$ the estimate
\begin{gather*}
\lp \hess V_N(x) \omega, \omega\rp       
\geq 3\overline x^2 -1 + 6\overline x \sum_k(x_k - \overline x) \omega_k^2  + 3 \sum_k(x_k - \overline x)^2 \omega_k^2  \geq \\
\geq 3\overline x^2 -1 + 6\overline x \sum_k(x_k - \overline x) \omega_k^2    .  
\end{gather*}
It follows then from the decomposition $\omega =  \overline\omega + (\omega-\overline \omega)$ and the Cauchy-Schwarz inequality that, 
for every $\alpha>0$ and every $x\in \mb R^N$ with 
$\sum_k (x_k - \overline x)^2 \leq N \alpha^2 |\overline x|^2$, 
\begin{gather}\label{hessstep1}
\lp \hess V_N(x) \omega, \omega\rp       
\geq  3\overline x^2 -1 -12  \alpha \overline x^2 - 6\alpha \overline x^2
\left(N \sum_k (\omega_k - \overline \omega)^4\right)^{\tfrac 12}.  
\end{gather}
The Sobolev inequality of 
Lemma~\ref{Sobolevlemma} implies that there exists a constant $\gamma>0$ 
such that 
\begin{equation}\label{useSI}   \left(N \sum_k (\omega_k - \overline \omega)^4\right)^{\tfrac 12}
\leq  \gamma \lp K \omega, \omega \rp  \leq \gamma ( 1+\ve)   . 
\end{equation}
Thus~\eqref{hessstep1} implies that, 
for every $\alpha>0$ and every $x\in \mb R^N$ with 
$\sum_k (x_k - \overline x)^2 \leq N \alpha^2 \overline x^2$
and $|\overline x| > r$, defining for short $\delta:= 3r^2-1 >0$
and $t:= 6 (2    -  \gamma(1+\ve) ) \in \mb R$,
\begin{gather*}
\lp \hess V_N(x) \omega, \omega\rp       
\geq  3\left[1   -  6 \alpha(2    -  \gamma(1+\ve) )
\right] r^2 - 1    =   \\
\left[1   -   \alpha t\right] (1+\delta) - 1   \geq     
\delta (1- \alpha |t|)  -  \alpha |t|      .\end{gather*}
We conclude that, taking $\alpha_0> 0$ such that 
$\alpha_0|t| \leq \min\{\tfrac 12, \tfrac\delta 4\}$, 
the estimate \begin{gather*}
\lp \hess V_N(x) \omega, \omega\rp       
\geq   \tfrac \delta 4     \end{gather*}
holds for
every $x\in \mb R^N$ with 
$\sum_k (x_k - \overline x)^2 \leq N \alpha_0^2 \overline x^2$
and $|\overline x| > r$. 
\end{proof}

\begin{remark}[Convex modifications of $V_N$]\label{convexremark}
It follows from Lemma~\ref{convexity} that there exists $R_0>0$ such that for every $r>\tfrac{1}{\sqrt 3}$ there exist a constant 
$C(r)>0$ and, for every $N\in \mb N$,
functions  $ V^{+}_{N,r}, V^{-}_{N,r} \in C^\infty(\mb R^N)$
with the following property: 
\begin{itemize}
\item[(i)]   $ \hess V^{\pm}_{N,r} (x)   \geq C(r)  $ for all $ N\in \mb N$
and   $x\in \mb R^N $, 
\item[(ii)]  $V^{\pm}_{N,r} (x)   =     V_{N} (x) $     
for all $ N\in \mb N$ and   $x\in  \Omega^{\rm min}_N(R_0,r)
 \cap \{ \pm \overline x \geq 0\}$.
\end{itemize}
\end{remark}
\begin{remark}
Note that it would be enough to have a Sobolev inequality 
for the $4$-norm appearing on the left hand side of~\eqref{useSI}
instead of the stronger $\infty$-norm statement of Lemma~\ref{Sobolevlemma}. This remark permits to extend our arguments to space dimension two and three. 
\end{remark}

\noindent
In regions where $V_N$ is uniformly convex we can now estimate 
$\mc E_{h,N} $ from below by the Bakry-\'Emery criterion for the spectral gap given by~\eqref{PoincBE}:

\begin{proposition}[Estimates in convex regions around the minima]\label{MainPropDiagMinima}
There exists an $R_0 >0$ such that for every 
$r\in (\tfrac{1}{\sqrt 3},1)$ the following holds: 
there exist constants $C= C(r)>0$ and, for every 
$h>0, N\in \mb N$, there exist  functions $\phi^{+}_{h, N} =  
\phi^{+}_{h, N, r}  $, $\phi^{-}_{h, N} =  
\phi^{-}_{h, N, r}  \in C_b^\infty(\mb R^N)$
 such that for all $h>0$, $N\in \mb N$ and 
  $f\in C_b^\infty(\mb R^N)$ with    
  $\supp f  \subset \Omega^{\min}_{N} (R_0, r)$ it holds
\[ \mc E_{h,N} [f]     \geq      C    \int_{\mb R^N}  f^2 e^{-\tfrac{V_N}{hN}}  dx  
     -    \Big(\int_{\mb R^N}  f  \phi^{+}_{h, N}    e^{-\tfrac{V_N}{hN}}  dx \Big)^2 
     -     
      \Big( \int_{\mb R^N}  f  \phi^{-}_{h, N}    e^{-\tfrac{V_N}{hN}}  dx  
       \Big)^2     .     \]
\end{proposition}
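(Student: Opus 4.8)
The plan is to reduce the estimate to a Poincar\'e inequality for a uniformly convex modification of $V_N$ via the Bakry-\'Emery criterion, and then correct for the replacement of $V_N$ by $V^\pm_{N,r}$ and for the fact that $V^\pm_{N,r}$ is convex everywhere but $f$ is only supported where $V^\pm_{N,r}=V_N$. First I would invoke Lemma~\ref{convexity} (in the sharpened form of Remark~\ref{convexremark}) to fix $R_0>0$ and, for a given $r\in(\tfrac{1}{\sqrt 3},1)$, the constant $C(r)>0$ and the functions $V^\pm_{N,r}\in C^\infty(\mb R^N)$ which agree with $V_N$ on $\Omega^{\rm min}_N(R_0,r)\cap\{\pm\overline x\ge 0\}$ and satisfy $\hess V^\pm_{N,r}\ge C(r)$ globally. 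By the Bakry-\'Emery criterion (Proposition~\ref{BEcriterion}), the probability measures $\mu^\pm_{h,N}(dx)$ proportional to $e^{-V^\pm_{N,r}(x)/(hN)}dx$ satisfy a log-Sobolev, hence a Poincar\'e, inequality~\eqref{PoincBE} with constant $\tfrac{C(r)}{hN}$.

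Next, given $f\in C_b^\infty(\mb R^N)$ with $\supp f\subset\Omega^{\rm min}_N(R_0,r)$, I split $f=f_++f_-$ where $f_\pm:=f\cdot\mathbf 1_{\{\pm\overline x\ge 0\}}$ (note that $\Omega^{\rm min}_N(R_0,r)$ with $r>\tfrac{1}{\sqrt 3}>0$ excludes $\overline x=0$, so the supports of $f_+$ and $f_-$ are separated and each $f_\pm$ is again smooth with support in the set where $V^\pm_{N,r}=V_N$). Then
\[
\mc E_{h,N}[f]=\mc E_{h,N}[f_+]+\mc E_{h,N}[f_-],
\]
and for each sign, since $V^\pm_{N,r}=V_N$ on $\supp f_\pm$,
\[
\mc E_{h,N}[f_\pm]=hN\int_{\mb R^N}|\nabla f_\pm|^2\,e^{-\frac{V^\pm_{N,r}}{hN}}dx
=hN\,Z^\pm_{h,N}\int_{\mb R^N}|\nabla f_\pm|^2\,d\mu^\pm_{h,N},
\]
where $Z^\pm_{h,N}$ is the normalizing constant of $\mu^\pm_{h,N}$. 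Applying~\eqref{PoincBE} to $f_\pm$ with the measure $\mu^\pm_{h,N}$ gives
\[
\mc E_{h,N}[f_\pm]\ \ge\ C(r)\int_{\mb R^N}f_\pm^2\,e^{-\frac{V^\pm_{N,r}}{hN}}dx
-C(r)\,Z^\pm_{h,N}\Big(\int_{\mb R^N}f_\pm\,d\mu^\pm_{h,N}\Big)^2.
\]
On $\supp f_\pm$ the densities coincide, so the first term equals $C(r)\int f_\pm^2 e^{-V_N/(hN)}dx$, and summing over the two signs reproduces $C(r)\int f^2 e^{-V_N/(hN)}dx$. For the subtracted term I would define $\phi^\pm_{h,N}:=\sqrt{C(r)\,Z^\pm_{h,N}}\;e^{\frac{V_N-V^\pm_{N,r}}{hN}}\,\mathbf 1_{\{\pm\overline x\ge 0\}}$ — but since $\phi^\pm_{h,N}$ must be a smooth bounded function, I would instead set $\phi^\pm_{h,N}:=\sqrt{C(r)\,Z^\pm_{h,N}}\,\psi^\pm_{N}$ where $\psi^\pm_N$ is a fixed smooth cutoff equal to $1$ on $\Omega^{\rm min}_N(R_0,r)\cap\{\pm\overline x\ge 0\}$ and vanishing on $\{\mp\overline x\ge 0\}$; then $\int f\phi^\pm_{h,N}e^{-V_N/(hN)}dx=\sqrt{C(r)\,Z^\pm_{h,N}}\int f_\pm e^{-V_N/(hN)}dx=\sqrt{C(r)}\,Z^\pm_{h,N}\int f_\pm\,d\mu^\pm_{h,N}$, so $\big(\int f\phi^\pm_{h,N}e^{-V_N/(hN)}dx\big)^2=C(r)\,(Z^\pm_{h,N})^2\big(\int f_\pm\,d\mu^\pm_{h,N}\big)^2$, which matches the subtracted term up to the harmless replacement of one factor $Z^\pm_{h,N}$ by $(Z^\pm_{h,N})^2$; absorbing constants (and using $Z^\pm_{h,N}\le$ a constant after rescaling, or simply redefining $\phi^\pm_{h,N}$ with the appropriate power) yields exactly~\eqref{QLBdiag}.

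The main obstacle I anticipate is \emph{bookkeeping of the normalization constants $Z^\pm_{h,N}$}, which are $h$- and $N$-dependent and need not be uniformly bounded; the definition of $\phi^\pm_{h,N}$ must be arranged so that the rank-one correction term in~\eqref{PoincBE} is reproduced exactly rather than up to an uncontrolled factor. The clean way around this is to never pass to the probability measure at all: work directly with the unnormalized form $\int|\nabla f_\pm|^2 e^{-V^\pm_{N,r}/(hN)}dx$, apply the weighted Poincar\'e inequality in the form
\[
hN\int|\nabla g|^2 e^{-V^\pm_{N,r}/(hN)}dx\ \ge\ C(r)\Big(\int g^2 e^{-V^\pm_{N,r}/(hN)}dx-\tfrac{\big(\int g\,e^{-V^\pm_{N,r}/(hN)}dx\big)^2}{\int e^{-V^\pm_{N,r}/(hN)}dx}\Big),
\]
and set $\phi^\pm_{h,N}:=\sqrt{C(r)/\int e^{-V^\pm_{N,r}/(hN)}dx}\;\psi^\pm_N\,e^{(V_N-V^\pm_{N,r})/(hN)}$ — which is smooth and bounded because on $\supp\psi^\pm_N$ inside $\Omega^{\rm min}$ the exponent vanishes and off it $\psi^\pm_N\equiv 0$. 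With this choice the identity $\int f\phi^\pm_{h,N}e^{-V_N/(hN)}dx=\sqrt{C(r)/\int e^{-V^\pm_{N,r}/(hN)}dx}\int f_\pm e^{-V^\pm_{N,r}/(hN)}dx$ makes the subtracted square in~\eqref{QLBdiag} coincide exactly with the correction term in the Poincar\'e inequality, and no uniformity in $Z^\pm_{h,N}$ is needed. The only remaining routine checks are that $f_\pm$ is admissible in~\eqref{PoincBE} (smoothness across $\overline x=0$ is automatic since $f$ vanishes there, as $r>0$) and that $\phi^\pm_{h,N}\in C_b^\infty(\mb R^N)$.
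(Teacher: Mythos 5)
Your strategy is the same as the paper's: invoke Remark~\ref{convexremark} to get the globally convex modifications $V^\pm_{N,r}$, apply the Bakry--\'Emery/Poincar\'e bound~\eqref{PoincBE} on each side of $\{\overline x=0\}$, and repackage the rank-one correction as the squared inner product with a suitable $\phi^\pm_{h,N}$. Your second, "unnormalized" version of the argument is the correct one, and the resulting constant $\sqrt{C(r)/\int e^{-V^\pm_{N,r}/(hN)}dx}$ agrees with the paper's $\sqrt{C/Z^\pm_{h,N}}$.

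However, there is one slip in the final step. Your definition
\[
\phi^\pm_{h,N}:=\sqrt{\tfrac{C(r)}{Z^\pm_{h,N}}}\;\psi^\pm_N\,e^{(V_N-V^\pm_{N,r})/(hN)}
\]
carries a spurious exponential factor, and the justification you give for its boundedness — that the exponent vanishes on $\supp\psi^\pm_N$ because $\psi^\pm_N\equiv 0$ off $\Omega^{\rm min}$ — is not correct as stated. Your $\psi^\pm_N$ is required to equal $1$ on $\Omega^{\rm min}_N(R_0,r)\cap\{\pm\overline x\ge 0\}$ and to vanish only on $\{\mp\overline x\ge 0\}$, so there is necessarily a transition region (where $0<\pm\overline x<r$, and also where the off-diagonal component exceeds $R_0$) on which $0<\psi^\pm_N<1$ but $V_N\neq V^\pm_{N,r}$; there the exponential $e^{(V_N-V^\pm_{N,r})/(hN)}$ is uncontrolled, so membership in $C_b^\infty(\mb R^N)$ is not clear. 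Note that one cannot fix this by asking $\supp\psi^\pm_N\subset\Omega^{\rm min}_N(R_0,r)\cap\{\pm\overline x\ge 0\}$, since a smooth function cannot be identically $1$ on a set and supported in that same set unless it is its indicator. The fix is simply to drop the exponential factor altogether and set $\phi^\pm_{h,N}:=\sqrt{C(r)/Z^\pm_{h,N}}\,\psi^\pm_N$: since $f\psi^\pm_N=f_\pm$ and $V_N=V^\pm_{N,r}$ on $\supp f_\pm$, the computation
\[
\int f\phi^\pm_{h,N}\,e^{-V_N/(hN)}dx=\sqrt{\tfrac{C(r)}{Z^\pm_{h,N}}}\int f_\pm\,e^{-V^\pm_{N,r}/(hN)}dx
\]
still holds verbatim, and smoothness and boundedness of $\phi^\pm_{h,N}$ are immediate. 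This is precisely the choice made in the paper (with $\psi^\pm_N$ replaced by a cutoff $\chi^\pm(\overline x)$ that is $1$ for $\pm\overline x\ge r/2$ and $0$ for $\pm\overline x\le 0$). With that modification your proof is correct and coincides with the paper's argument.
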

\begin{proof}
Take $R_0>0$ as in Remark~\ref{convexremark}, let 
$r\in (\tfrac{1}{\sqrt 3},1)$ and consider the constant $C=C(r)$
and the functions 
$ V^{+}_{N} := V^{+}_{N,r}$, $V^{-}_{N} := V^{-}_{N,r}$ as in Remark~\ref{convexremark}.
Moreover define for shortness $Z^{\pm}_{h, N} :=    \int_{\mb R^N} \exp{(-\tfrac{V^{\pm}_N}{hN})}  dx $ and  
 take $\chi^{+}, \chi^{-}\in C_b^\infty(\mb R^N) $ such that $\chi^{\pm} \equiv 1$ on $\{\pm \overline x\geq \tfrac r2\}$ and 
$\chi^{\pm} \equiv 0$ on $\{\pm \overline x\leq 0\}$.
 Then for 
all $h>0$, $N\in \mb N$ and 
  $f\in C_b^\infty(\mb R^N)$ with    
  $\supp f  \subset \Omega^{\min}_{N} (R_0, r)$ it 
  follows from~\eqref{PoincBE} that
\begin{gather*}
\mc E_{h, N}[f]   =   hN\int_{\mb R^N}  |\nabla(\chi^+ f)|^2  e^{-\tfrac{V^+_N}{hN}}  dx   +  
hN\int_{\mb R^N}  |\nabla (\chi^-f)|^2  e^{-\tfrac{V^-_N}{hN}}  dx       \  \geq   \\
     \geq   \   C \left( \int_{\mb R^N} |\chi^+ f|^2   e^{-\tfrac{V^+_N}{hN}}  dx   -    \Big(  \tfrac{1}{\sqrt{Z^+_{h, N}}}\int \chi^+ f   e^{-\tfrac{V^+_N}{hN}}  dx \Big)^2  \right)     \ 
 +  \\
   +      \
   C  \left( \int_{\mb R^N} |\chi^- f|^2   e^{-\tfrac{V^-_N}{hN}}  dx   -    \Big(  \tfrac{1}{\sqrt{Z^-_{h, N}}}\int \chi^- f   e^{-\tfrac{V^-_N}{hN}}  dx \Big)^2  \right) 
     =   \\
   =  
    C  \int_{\mb R^N} f^2   e^{-\tfrac{V_N}{hN}}  dx   -    \Big( \int  f  \sqrt{\tfrac{C}{Z^+_{h, N}}}\chi^+    e^{-\tfrac{V_N}{hN}}  dx \Big)^2  -
 \Big( \int  f  \sqrt{\tfrac{C}{Z^-_{h, N}}}\chi^-
   e^{-\tfrac{V_N}{hN}}  dx \Big)^2   
     . 
\end{gather*}
Thus the functions 
$\phi^{\pm}_{h, N} :=  
\sqrt{\tfrac{C}{Z^\pm_{h, N}}}\chi^\pm$     satisfy the statment of the proposition. 
\end{proof}
\noindent
The next proposition gives a lower bound on $ \mc E_{h,N}$ on  $\Omega^{0}_N(R,r)$ for $R,r$ small enough. The proof
is elementary if one first bounds the norm of the full gradient of $f$ from below by the norm of the directional derivative in direction of the constants and then performs a one-dimensional version of the ground state transformation (Lemma~\ref{GST}).  
\begin{proposition}[Estimate around the saddle point]\label{propsaddle}
 Let $\delta, C >0$ with $C< \delta^2< \tfrac 12$
and define $r(\delta):=   \sqrt{\tfrac{(1-2\delta^2)}{ 3}}>0$ and 
   $R(\delta):= \sqrt{ \tfrac 23(\delta^2 - C)}>0$. Then the following holds: for every 
 $h>0$, $N\in \mb N$ and 
  $f\in C_b^\infty(\mb R^N)  $ with   
  $ \supp f  \subset \Omega^{0}_{N}(R(\delta), r(\delta))$
\[ \mc E_{h,N} [f]     \geq      C    \int_{\mb R^N}  f^2 e^{-\tfrac{V_N}{hN}}  dx      .    \]
 \end{proposition}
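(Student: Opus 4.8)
The plan is to project the gradient onto the direction of the constant states, thereby reducing the estimate to a one-parameter family of genuinely one-dimensional problems in which the energy $V_N$ turns out to be uniformly \emph{concave} on the region where $f$ lives, and then to conclude by the one-dimensional ground state transformation of Lemma~\ref{GST}.

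First I would write $e:=\tfrac{1}{\sqrt N}(1,\dots,1)$ for the unit vector spanning the space $\mc C$ of constant states. Since $e$ is a unit vector we have $|\nabla f|^2\geq|\partial_e f|^2$, hence $\mc E_{h,N}[f]\geq hN\int_{\mb R^N}|\partial_e f|^2 e^{-V_N/hN}\,dx$. Passing to the coordinates $(\xi,y)\in\mb R\times\mb R^{N-1}$ of Section~\ref{SectionAway}, for which $x=\xi(1,\dots,1)+Ay$, $\overline x=\xi$, $\sum_k(x_k-\overline x)^2=P_2(y)=|y|^2$, and $dx=\sqrt N\,d\xi\,dy$, one has $\partial_e f=\tfrac{1}{\sqrt N}\partial_\xi\tilde f$ with $\tilde f(\xi,y):=f(x(\xi,y))$, so that
\[ \mc E_{h,N}[f]\;\geq\;h\sqrt N\int_{\mb R^{N-1}}\Big(\int_{\mb R}|\partial_\xi\tilde f(\xi,y)|^2\,e^{-\tilde V_N(\xi,y)/hN}\,d\xi\Big)\,dy . \]

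Next I would compute, using $K(1,\dots,1)=0$, that $\partial_\xi\tilde V_N(\xi,y)=\sum_k x_k^3-N\xi$ and hence $\partial_\xi^2\tilde V_N(\xi,y)=3\sum_k x_k^2-N=3N\xi^2+3|y|^2-N$. On $\Omega^{0}_N(R(\delta),r(\delta))$, which contains $\supp f$, we have $\xi^2\leq r(\delta)^2=\tfrac{1-2\delta^2}{3}$ and $|y|^2\leq NR(\delta)^2=\tfrac{2N}{3}(\delta^2-C)$, whence
\[ \partial_\xi^2\tilde V_N(\xi,y)\;\leq\;N(1-2\delta^2)+2N(\delta^2-C)-N\;=\;-2NC . \]
Thus for each fixed $y$ the smooth polynomial $U_y(\xi):=\tilde V_N(\xi,y)/hN$ satisfies $U_y''\leq-2C/h$ on the compact set where $\xi\mapsto\tilde f(\xi,y)$ is supported (for the remaining $y$, $\tilde f(\cdot,y)\equiv0$ and the estimate below is trivial).

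Finally, for each $y$ I would apply Lemma~\ref{GST} in dimension one with $U=U_y$ and $W\equiv0$ to $\xi\mapsto\tilde f(\xi,y)\in C_c^\infty(\mb R)$ (note $\Omega^{0}_N(R,r)$ is bounded, so $f\in C_c^\infty(\mb R^N)$). Discarding the nonnegative gradient term in the resulting identity leaves
\[ \int_{\mb R}|\partial_\xi\tilde f(\xi,y)|^2 e^{-U_y}\,d\xi\;\geq\;\int_{\mb R}\big(\tfrac14(U_y')^2-\tfrac12 U_y''\big)\,\tilde f^2(\xi,y)\,e^{-U_y}\,d\xi\;\geq\;\tfrac Ch\int_{\mb R}\tilde f^2(\xi,y)\,e^{-U_y}\,d\xi , \]
where the last step uses $(U_y')^2\geq0$ and $-\tfrac12 U_y''\geq C/h$ on $\supp\tilde f(\cdot,y)$. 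Integrating in $y$, multiplying by $h\sqrt N$, and changing variables back (the two factors of $h$ cancel and $\sqrt N\,d\xi\,dy=dx$) yields $\mc E_{h,N}[f]\geq C\int_{\mb R^N}f^2 e^{-V_N/hN}\,dx$, which is the claim. I do not expect a real obstacle here: the argument is elementary, and the only points requiring care are the bookkeeping of the Jacobian $\sqrt N$ and of the cancellation of the powers of $h$ (the $h$ in front of $\mc E_{h,N}$ cancels the $1/h$ coming from $-\tfrac12 U_y''$), together with the verification that the prescribed $r(\delta)$ and $R(\delta)$ are tuned exactly so that $\partial_\xi^2\tilde V_N\leq-2NC$ on $\supp f$.
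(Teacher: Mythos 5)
Your proposal is correct and takes essentially the same route as the paper: both proofs bound the full gradient by the directional derivative along $(1,\dots,1)$, apply the ground state transformation in that single direction, discard the two nonnegative terms, and then estimate the remaining second-derivative term using the prescribed $r(\delta)$, $R(\delta)$. The only cosmetic difference is that you pass to the $(\xi,y)$ coordinates and apply Lemma~\ref{GST} fiber-by-fiber in $\xi$, whereas the paper works directly in $x$ with the operator $\sum_k\partial_k$; the resulting identity and the bound $-\tfrac{1}{2N}\partial_\xi^2\tilde V_N = \tfrac12 - \tfrac32\overline x^2 - \tfrac{3}{2N}\sum_k(x_k-\overline x)^2 \geq C$ on $\Omega^0_N(R(\delta),r(\delta))$ are identical.
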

\begin{proof}  
Let $f\in C_b(\mb R^N)$. 
Setting $g :=  e^{\tfrac{ - V_N }{2hN}} f$ we get 
\begin{gather}
\mc E_{h,N} [f]     \geq   h N \int_{\mb R^N} \big|\nabla f(x) \cdot \tfrac{(1, \dots, 1)}{\sqrt N }\big|^2   e^{- \tfrac{V_N}{hN}} dx =  h \int_{\mb R^N} \big|\sum_k \partial_k f \big|^2   e^{- \tfrac{V_N}{hN}} dx    \   =    \nonumber \\
=    h \int_{\mb R^N}   \big\{  \big|\sum_k \partial_k g \big|^2  
+  \tfrac{1}{4h^2N^2}\big|\sum_k \partial_k V_N \big|^2  g^2      -    \tfrac {1}{2hN}  \sum_{k,j} \partial^2_{k,j} V_N   \, 
g^2  \big \}  dx     \    \geq     \nonumber   \\
-   \  \tfrac {1}{2N}     \int_{\mb R^N} \sum_{k,j} \partial^2_{k,j} V_N  \, 
f^2 e^{-\tfrac{V_N}{hN}}     dx  .    \label{saddleestimate}
\end{gather}
Moreover for every $x\in \mb R^N$
\begin{gather*}
 - \tfrac {1}{2N} \sum_{k,j} \partial^2_{k,j} V_N(x)   =  
  - \tfrac {1}{2N} \sum_k (3x_k^2 -1)    =    \tfrac {1}{2}  - 
   \tfrac {3}{2} \overline x^2    - 
   \tfrac {3}{2N} \sum_k  (x_k - \overline x) ^2    . 
\end{gather*}
Thus for every $x\in \Omega^0_N(R(\delta)r(\delta))$
\begin{gather*}
 - \tfrac {1}{2N} \sum_{k,j} \partial^2_{k,j} V_N(x)   \geq   
     \tfrac {1}{2}  - 
   \tfrac {3}{2} r(\delta)^2   - 
   \tfrac {3}{2} R(\delta)^2    \geq C   . 
\end{gather*}
This gives the desired resultn by taking $f$ with 
  $ \supp f  \subset \Omega^{0}_{N}(R(\delta), r(\delta))$
in\eqref{saddleestimate}.
\end{proof}

\noindent
Note that the domains covered by Proposition~\ref{MainPropDiagMinima}
and Proposition~\ref{propsaddle} do not match. 
In order to fill the gap we consider now 
domains in $\mb R^N$ of the form 
  \[
   \tilde\Omega_N(R,r_1, r_2)   :=    \mc S_N(R)   \cap  
   \Big\{x\in \mb R^N:   r_1 \leq  |\overline x | \leq r_2
       \Big\}      .  \]
 where  $r_1, r_2 \in (0,1)$, $r_1< r_2$ and $R>0$. 
 One expects the restriction of the Dirichlet form $\mc E$
 on functions supported in such a $\tilde \Omega_{N}(r_1, r_2, R)$ to be  bounded from below by a term of order $h^{-1}$, uniformly in $N$. The next proposition shows that this is the case, at least if $(r_1, r_2) \subset (0, \tfrac {1}{\sqrt 2})$ and $R$ is sufficiently small. This will be enough for our purposes, since 
 for our main proof we will need only to cover the case in which 
 $(r_1, r_2)$ is an arbitrarily small neighbourhood of the inflection point
 $\tfrac {1}{\sqrt 3}$ and $R$ is arbitrarily small. 
\begin{proposition}[Estimates around infliction points]\label{propinflection}
For every $\delta\in (0,\tfrac 13)$ there exist constants $C= C(\delta), R_0 = R_0(\delta), h_0 = h_0(\delta)>0$ such that for all $h\in (0, h_0]$, $N\in \mb N$
and  $f\in C_b^\infty(\mb R^N)$  with $    \supp f  \subset \tilde \Omega_{N} (R_0, \sqrt{\delta}, 
\tfrac{1}{\sqrt 2 }\sqrt{ 1- \delta}) $ we have 
\begin{equation}\label{DirichletinflectionProp}
 \mc E_{h,N} [f]     \geq      \tfrac{C}{h}    \int_{\mb R^N}  f^2 e^{-\tfrac{V_N}{hN}}  dx      .     
   \end{equation}
\end{proposition}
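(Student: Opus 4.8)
The plan is to follow the strategy of Proposition~\ref{propsaddle}: bound $\mc E_{h,N}[f]$ from below by the contribution of the derivative in the direction $(1,\dots,1)$ of the constant states, perform the one-dimensional ground state transformation, and then estimate the resulting zeroth order potential on $\tilde\Omega_N(R_0,\sqrt\delta,\tfrac1{\sqrt2}\sqrt{1-\delta})$. Writing $\overline{x^m}:=\tfrac1N\sum_k x_k^m$ and using that the rows of $K$ sum to zero (so that $\sum_k\partial_k V_N=N(\overline{x^3}-\overline x)$ and $\sum_{k,j}\partial^2_{k,j}V_N=N(3\overline{x^2}-1)$), the computation leading to~\eqref{saddleestimate}, but this time keeping the nonnegative term $\tfrac1{4hN^2}(\sum_k\partial_k V_N)^2$, gives for every $f\in C_b^\infty(\mb R^N)$
\[
\mc E_{h,N}[f]\ \geq\ \int_{\mb R^N}\Big(\tfrac1{4h}\big(\overline{x^3}-\overline x\big)^2+\tfrac12-\tfrac32\,\overline{x^2}\Big)\,f^2\,e^{-\frac{V_N}{hN}}\,dx\ .
\]
By the symmetry $x\mapsto -x$, and since $\{\overline x>0\}$ and $\{\overline x<0\}$ are disjoint on $\tilde\Omega_N$ (as $|\overline x|\geq\sqrt\delta$), we may assume $\overline x\in I:=[\sqrt\delta,\tfrac1{\sqrt2}\sqrt{1-\delta}]$, an interval which contains the inflection point $\tfrac1{\sqrt3}$ precisely because $\delta<\tfrac13$.

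The heart of the matter is then a lower bound for the integrand. On $I$ the ``diagonal'' value $\overline x-\overline x^3=\overline x(1-\overline x^2)$ is bounded below by a constant $c_1(\delta)>0$, while $\overline{x^3}-\overline x$ differs from $\overline x^3-\overline x$ by $3\overline x\sigma^2+\tfrac1N\sum_k(x_k-\overline x)^3$, where $\sigma^2:=\overline{x^2}-\overline x^2\leq R_0^2$. The crucial observation is that the cubic fluctuation term is controlled by the energy itself: since $\overline{x^2}<\tfrac12$ on $\tilde\Omega_N$ (this is where the cutoff $\tfrac1{\sqrt2}$ and the smallness of $R$ are needed), the trivial bound $V_N\geq\tfrac14\sum_k(x_k^2-1)^2$ gives $\overline{x^4}\leq 4V_N/N$, hence by Cauchy--Schwarz $\big|\tfrac1N\sum_k(x_k-\overline x)^3\big|\leq\const\cdot R_0\,(1+V_N/N)^{1/2}$. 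Together with $\tfrac12-\tfrac32\overline{x^2}\geq-1$ this yields, on $\tilde\Omega_N$ with $\overline x\in I$,
\[
\tfrac1{4h}\big(\overline{x^3}-\overline x\big)^2+\tfrac12-\tfrac32\,\overline{x^2}\ \geq\ \tfrac1{4h}\,\big(c_1(\delta)-3R_0^2-\const\cdot R_0\,(1+V_N/N)^{1/2}\big)_+^2-1\ .
\]
Choosing first $R_0$ small (depending on $\delta$) and then $\Lambda_\delta:=c_1(\delta)^2/(1024\,R_0^2)$, the bracket is $\geq\tfrac12 c_1(\delta)$ on $\{V_N\leq\Lambda_\delta N\}$, so the integrand is $\geq c_1(\delta)^2/(16h)-1\geq C/h$ for $h\leq h_0$, with $C,h_0$ depending only on $\delta$. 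This already proves~\eqref{DirichletinflectionProp} for all $f$ supported in $\tilde\Omega_N(R_0,\sqrt\delta,\tfrac1{\sqrt2}\sqrt{1-\delta})\cap\{V_N\leq\Lambda_\delta N\}$.

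It remains to prove~\eqref{DirichletinflectionProp} for $f$ supported in $\tilde\Omega_N(R_0,\sqrt\delta,\tfrac1{\sqrt2}\sqrt{1-\delta})\cap\{V_N>\Lambda_\delta N\}$, and \textbf{this is the main obstacle}: on that region the pointwise bound above degenerates (the one-dimensional effective potential seen along the diagonal may acquire an interior maximum there), and although the Gibbs weight is exponentially small, $e^{-V_N/hN}<e^{-\Lambda_\delta/h}$, the estimate must come from a genuine Dirichlet form bound rather than a pointwise one. As elsewhere in Section~\ref{sec.Loc}, the tool for this is the NGS Bound (Proposition~\ref{NGSbound}): one performs a ground state transformation (Lemma~\ref{GST}) relative to a \emph{uniformly convex} reference potential $W_N$ obtained by convexifying $V_N$ --- for instance replacing the single-site double well $\tfrac14(x^2-1)^2$ by a convex function coinciding with it for $|x|$ large, while keeping $\tfrac12\langle x,Kx\rangle$ --- so that by the Bakry--\'Emery criterion (Proposition~\ref{BEcriterion}) the measure proportional to $e^{-W_N/hN}\,dx$ satisfies a logarithmic Sobolev inequality with constant of order $1/(hN)$; one then checks, using the virial identity $2\langle x,\nabla V_N(x)\rangle=4V_N(x)+\sum_k x_k^4-N$ (which makes $|\nabla V_N|$ large on $\{V_N>\Lambda_\delta N\}$), that the effective potential $M$ produced by the transformation makes the NGS integral $\int_{\{V_N>\Lambda_\delta N\}}e^{-2M/\rho}\,dm_{W_N}$ super-exponentially small in $h$, uniformly in $N$. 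Finally one glues the two regimes by a quadratic IMS partition of unity subordinate to $\{V_N<\Lambda_\delta N\}$ and $\{V_N>\tfrac12\Lambda_\delta N\}$, exactly as in the proof of Theorem~\ref{thMAIN}, absorbing the localization error into $C/h$ since $h_0$ is small. The delicate part of the whole argument is precisely this last step --- engineering the reference potential $W_N$ so that the NGS integrability estimate on $\{V_N>\Lambda_\delta N\}$ is simultaneously super-exponentially small in $h$ and uniform in the dimension $N$.
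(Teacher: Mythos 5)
Your first step is correct: keeping the positive term $\tfrac1{4h}(\overline{x^3}-\overline x)^2$ and estimating pointwise along the constant direction does handle the sublevel set $\{V_N\leq\Lambda_\delta N\}$, and this is a nice elementary reduction. But the hard region $\{V_N>\Lambda_\delta N\}$ is exactly where the proof lives, and there you have not supplied an argument, only a sketch --- and the sketch has a genuine gap. The proposed convexification of the single-site double well by a convex $w$ agreeing with $\tfrac14(t^2-1)^2$ for $|t|$ large, followed by NGS, leads to an effective potential $M$ involving $\sum_k[(x_k^3-x_k)^2-w'(x_k)^2]$ and $\sum_k(3x_k^2-1-w''(x_k))$, and none of the required estimates ($\hess W_N\geq c>0$ uniformly, $\int_{\{V_N>\Lambda_\delta N\}}e^{-2M/\rho}\,dm_{W_N}$ super-exponentially small in $h$ \emph{and} uniformly in $N$) is checked; in particular, the virial identity only gives $|\nabla V_N|\geq\lp x,\nabla V_N\rp/|x|\gtrsim(V_N-\tfrac N2)/\sqrt N\sim\sqrt N$ on $\tilde\Omega_N$ (since $|x|\lesssim\sqrt N$ there), which does not obviously yield the claimed smallness. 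And even if it did, you would then still have to carry out the IMS gluing, introducing a further localization error to absorb.

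The paper's proof closes the gap by a sharper choice of reference potential that removes the need to split by sublevel set at all. Take $U_N:=V_N+\tfrac{1+\kappa}{2}N\overline x^2-\tfrac N4$, a rank-one convex perturbation rather than a single-site convexification. Then $U_N=\tfrac14\sum_k x_k^4+\tfrac\kappa2 N\overline x^2+\tfrac12\lp x-\overline x,(K-1)(x-\overline x)\rp$ has $\hess U_N\geq\kappa$ everywhere, so Bakry--\'Emery applies, and --- this is the decisive point that your reference potential does not reproduce --- the NGS effective potential $G_N=\tfrac1{2\kappa}(|\nabla V_N|^2-|\nabla U_N|^2)+U_N$ admits a pointwise lower bound $G_N\geq N(\tfrac{\delta^2}{4\kappa}-cR^2)+Q_N$ on all of $\tilde\Omega_N(R,\sqrt\delta,\tfrac1{\sqrt2}\sqrt{1-\delta})$, with no restriction on the size of $V_N$: the dangerous cubic cross-term $-\tfrac1\kappa\overline x\sum_j(x_j-\overline x)^3$ is absorbed by Young's inequality into the \emph{positive} quartic $\tfrac14\sum_j(x_j-\overline x)^4$ coming from $U_N$, leaving only a remainder $-\tilde c\,\overline x^2\sum_j(x_j-\overline x)^2\geq -\tfrac{\tilde c}2(1-\delta)NR^2$. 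It is precisely this balance between the low-rank quadratic perturbation and the retained quartic that makes the bound pointwise and dimension-free, and it is exactly what is missing from your proposal.
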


\begin{proof}
Fix $\delta\in (0, \tfrac 13)$.  We set for short 
$\kappa := \min\{\tfrac \delta2, \mu-1\}$ and  
consider 
for each 
$N\in \mb N$ the
function $U_{N}:\mb R^N \to \mb R$ given by 
\begin{gather*}    U_{N}(x)   :=    V_N(x)   + \tfrac{1+\kappa}{2}    
N \overline  x^2     - \tfrac N4   =   
\tfrac 14 \sum_k x_k^4   + Q_{N}(x)  ,  
\end{gather*}
with $Q_{N}(x) :=    \tfrac \kappa 2 N \overline x^2   + 
\tfrac 12  \lp x-\overline x, (K-1) x- \overline x\rp  $.
Observe that $\hess U_{N}(x)   \geq \kappa >0$ for every 
 $N\in \mb N$ and every $x\in \mb R^N$. Further, given 
$f\in C_b^\infty(\mb R^N)$, according to Remark~\ref{GST} we compute  
\begin{gather*}
 \mc E_{h,N} (f)      =  \\  \int_{\mb R^N}   
 \left[  hN | \nabla g|^2    +   \left(  \tfrac{1}{4hN} |\nabla V_N|^2 - 
  \tfrac{1}{4hN} |\nabla U_{N}|^2    - \tfrac 12 \Delta 
  (V_N-U_{N})        \right)   g^2      \right]     e^{- \tfrac{U_{N}}{hN}}  dx,
\end{gather*}
where $g = e^{\tfrac{ - V_N + U_{N}}{2hN}} f$. 
It follows then from the NGS Bound (Prop.~\ref{NGSbound}),
the Bakry-Emery criterion (Prop.~\ref{BEcriterion})
and the estimate $\Delta 
  (V_N-U_{N}) (x) = - (1+\kappa)\leq 0$
that 
for every $R>0$, $N\in \mb N$ and  $f\in C_b^\infty(\mb R^N)$ with
 $\supp f  \subset \Omega_{N,\delta,R}:=
 \tilde \Omega_N\big( R, \sqrt{\delta}, 
\tfrac{1}{\sqrt 2 }\sqrt{ 1- \delta}\big)  $
we have 
\begin{equation}\label{dirichletinflection}\mc E_{h,N} (f)   \geq    C (h, N, R)   \int_{\mb R^N} f^2 
 e^{- \tfrac{V_{N}}{hN}}  dx, 
\end{equation}
where 
\[  C(h, N, R)   :=   
  \frac{\kappa}{2}   \left( 
   \log  \int_{\mb R^{N}}  e^{-\tfrac{U_{N}(x)}{hN}}    dx 
-   \log \int_{\Omega_{N,\delta, R}}
 e^{-\tfrac{G_{N}(x)}{hN}}     dx \right)  ,     \]
with 
\begin{gather*}   G_{N}   :=      \tfrac{1}{2\kappa}
\left( |\nabla V_N |^2 - |\nabla U_{N}|^2  \right)  +     U_{N}     =   \\
 \tfrac{1}{2\kappa}
 |\nabla (V_N - U_{N}) |^2  +   \tfrac {1}{\kappa}   \nabla U_{N} \cdot \nabla (V_N - U_{N}) 
    +     U_{N}    
.    \end{gather*}
A straigthforward computation (see below for details) 
shows that there exists a constant $c\in \mb R$
such that for every $N\in \mb N$ and every $R>0$ we have
the lower bound
\begin{equation}\label{LBonF_N}  G_N(x) 
   \geq N \left(  \tfrac{\delta^2}{4 \kappa}   -  c R^2  \right)    + Q_N(x)        \  \  \     ,   \  \ \    \forall x\in \Omega_{N, \delta, R}  .    
\end{equation}
Thus, letting $\tilde C( R) :=  \tfrac{\delta^2}{ 4\kappa}   - c R^2$ and 
denoting for short by $\gamma_N(dx)$ the Gaussian probability measure on $\mb R^N$ with density $e^{- \frac{Q_N(x)}{N}}$, one gets
for all $h,R>0, N\in \mb N$
\begin{gather*}  C(h, N, R)   \geq    
  \frac{\kappa}{2}   \left( 
   \log  \int_{\mb R^{N}}  e^{-\tfrac{U_{N}(x)}{hN}}    dx 
-   \log \int_{\mb R^{N}}
 e^{-\tfrac{Q_{N}(x)}{hN}}     dx \right)  +     \tfrac{\kappa \tilde C( R)}{2 h}
 =      \\
 =    \frac{\kappa}{2}   
   \log  \int_{\mb R^{N}}  e^{-\frac{h \sum_k x_k^4 }{4N}}    \gamma_N(dx) 
   +     \tfrac{\kappa \tilde C(R)}{2 h}   \geq   
   - h  \tfrac{\kappa}{8}  \tfrac{1}{N} \sum_k   
   \int_{\mb R^{N}}  x_k^4     \      \gamma_N(dx) 
    +     \tfrac{\kappa \tilde C(R)}{2 h}  . 
 \end{gather*}
An explicit computation of the Gaussian integrals
$\int_{\mb R^{N}}  x_k^4     \      \gamma_N(dx) $ (see also~\cite[Lemma 2.2] {DGLP}
for details) shows that there exists a constant $M>0$ 
such that 
\[   \tfrac{1}{N} \sum_k   
   \int_{\mb R^{N}}  x_k^4     \      \gamma_N(dx)    <    M     \  \  \   
   ,  \  \  \   \forall N\in \mb N. 
   \]
 Thus, chosing $R_0>0$ small enough, such that 
 $\tilde C(R_0) =  \tfrac{\delta^2}{4 \kappa}   - c R_0^2>0$
 and $h_0  =   \sqrt{\tfrac{  2 \tilde C(R_0)}{M} }$, one gets
 for every $N\in \mb N$ and every $h\in (0, h_0]$
\begin{gather*}  C(h, N, R_0)   \geq    \tfrac{\kappa \tilde C(R_0)}{4 h }  .    
 \end{gather*}
 Together with~\eqref{dirichletinflection} this provides the final estimate~\eqref{DirichletinflectionProp} with 
 $C:= \tfrac{\kappa \tilde C(R_0)}{4}$. 
 
 \

\noindent
In order to complete the proof, we give now the details on how the estimate~\eqref{LBonF_N} can be proven. First, to obtain a more explicit formula for $G_{N} $, we observe that 
 $\left[ \nabla(V_N- U_{N})(x)\right]_j =  - (1+\kappa) \overline x$
for every $j=1, \dots, N$ and that
\begin{gather*}
\sum_j \left[ \nabla U_{N} (x)\right]_j   =   
\sum_{j} \left( x_j^3 - x_j + [Kx]_j\right) + (1+\kappa)N \overline x = \\
= \sum_{j}  x_j^3  + \kappa N \overline x   =   
N\overline x^3    + \kappa N \overline x   + 3 \overline x \sum_{j} (x_j - \overline x)^2   + \sum_j (x_j - \overline x)^3 . 
\end{gather*}
It follows that 
\begin{gather*}
 G_{N} (x)  \ =  \\  \tfrac{(1+\kappa)^2}{2\kappa} N\overline x^2  -  
\tfrac{1+\kappa}{\kappa} \Big( N\overline x^4 + \kappa N \overline x^2 + 
3 \overline x^2 \sum_{j} (x_j - \overline x)^2  \   +   \\
+   \ \overline x\sum_j (x_j - \overline x)^3   \Big) + \tfrac 14 \sum_j x_j^4   + Q_{N}(x)  \  =  \\
=  \ \tfrac{N}{2\kappa} \theta_{N}(\overline x) \,    \overline x^2 
+ H_{N} (x)  
 + Q_{N}(x) , 
 \end{gather*}
 with 
 \[    \theta_{N}(\xi)   :=  
 \left[ 1 -\kappa^2 -\tfrac 12 (4+3\kappa) \xi^2     \right] , \]
 and
 \[    H_{N}(x) := 
\tfrac 14 \sum_{j} (x_j-\overline x)^4  
- \tfrac 1 \kappa  \overline x  \sum_{j}
 (x_j -\overline x)^3    
- \tfrac 32 \tfrac{2+\kappa}{\kappa} \overline x^2 \sum_j (x_j - \overline x)^2 . 
 \]
 Note that for $\overline x^2 \leq \tfrac 12 (1- \delta)$ 
  one has
 $\theta_{N}(\overline x) \geq \delta - (\kappa^2 +\tfrac 34 \kappa) 
 \geq \tfrac \delta 2 $, where for the last inequality we have used  
 $\delta< \tfrac 13$ and $\kappa\leq \tfrac \delta 2$. 
 Moreover, the inequality 
 $ab \leq    \alpha  a^2   +  \tfrac{1}{4\alpha}b^2$ with 
 $a= (x_k - \overline x)^2$, $b= \overline x(x_k - \overline x)$ and $\alpha =  \tfrac \kappa 4$ yields
 \begin{gather*}  
 H_{N} (x)    \geq -  \tilde c \overline x^2 \sum_j (x_j - \overline x)^2
    \  \  \    ,      \text{ with }   \tilde c :=  \tfrac 32 \tfrac{2+\kappa}{\kappa} 
 +  \tfrac{1}{\kappa^2}     .
\end{gather*}
It follows that
\begin{gather*}
 G_{N} (x) \geq 
 \left( \tfrac{\delta^2}{4\kappa}  -  \tfrac {\tilde c}{2}( 1 - \delta) R^2  \right)    N    
 + Q_{N}(x)    
   \  \  \    ,    \  \ \    \forall x\in \Omega_{N, \delta, R} ,
\end{gather*}
i.e.~\eqref{LBonF_N} with $c= \tfrac {\tilde c}{2}( 1 - \delta) $.

\end{proof}

\noindent
We can now wrap up all the estimates of this section and
prove Proposition~\ref{MainPropDiag}. 
\begin{proof}[Proof of Proposition~\ref{MainPropDiag}]
We fix $a_0,b_0, a_{{\rm min }}, b_{{\rm min }}>0$ such 
that 
\[a_0 < b_0< \tfrac{1}{\sqrt 3}<a_{{\rm min }}<b_{{\rm min }}< \tfrac{1}{\sqrt 2} \]
and  two functions $ \chi^{{\rm{min}}}, \chi_0
\in C_b^\infty(\mb R;[0,1])$ such that
$\chi^{{\rm{min}}} \equiv 1$ on $(-\infty, -b_{\rm{min}}]\cup [b_{\rm{min}}, +\infty)$, $\chi^{{\rm{min}}} \equiv 0$ on $[-a_{\rm{min}}, a_{\rm{min}}]$, 
$\chi^0 \equiv 1$ on
 $[-a_0, a_0]$, $\chi^0 \equiv 0$ on $\mb R\setminus [-b_0, b_0]$. We then define $\zeta_{N}^{{\rm{min}}}$, 
$ \zeta^{0}_{N} $, $\tilde \zeta_{N}: \mb R^N \ra [0,1]$ by setting  
\begin{equation}\label{defkappa0}
    \zeta_{N}^{{\rm{min}}} (x)   : =  
         \chi^{{\rm{min}}} ( \overline x )     \   \     ,   \  \  
         \zeta^0_N(x) :=   \chi_0(\overline x)
       \    \   ,      \     \   \tilde\zeta_{N}(x)    :=   
  \big(1-[\zeta^{{\rm{min}}}_{N}]^2 (x)
  -  [\zeta^{0}_{N}]^2 (x)  \big)^\frac12    , 
 \end{equation}
 so that $  [\zeta^{{\rm{min}}}_{N}]^2
 + [\zeta^{0}_{N}]^2 + [\tilde \zeta_{N}]^2 \equiv 1$. 
Note that $\zeta^{{\rm{min}}}_{N},\zeta^{0}_{N}, 
\tilde \zeta_{N}\in C^\infty(\mb R^N)$ 
for all $N\in \mb N$. 

\

\noindent
It follows from the IMS localization formula that for all $N\in \mb N$, $h>0$
 and all $f\in C_b^\infty(\mb R^N)$
\begin{gather}\label{IMSondiagonal}
\mc E_{h, N} [f]   = 
\mc E_{h, N} [\zeta^{{\rm{min}}}_{N}f]  + 
  \mc E_{h, N} [\zeta^{0}_{N}f]  + 
  \mc E_{h, N} [\tilde \zeta_{N}f]  + 
       h \mc G_{h, N}[f]           ,   
\end{gather}
where the localization error $\mc G_{h, N}[f]$ is given by 
 \[ \mc G_{h, N}[f] :=  -   N \int_{\mb R^N} \left( |\nabla \zeta^{{\rm{min}}}_{N}|^2  + 
  |\nabla \zeta^0_{N}|^2    +  |\nabla \tilde\zeta_{N}|^2    \right)  f^2  \, e^{- \frac{V_N }{hN}}  dx 
  .   \]
 Computing the gradients in this formula shows that
  there exists a constant $c>0$ such that 
  for all $N\in \mb N$, $h>0$ and $f\in C_b^\infty(\mb R^N)$
\[    |\mc G_{h, N}[f] |  \leq   c  \int_{\mb R^N}   f^2  \, e^{- \frac{V_N }{hN}}  dx .   \]
Using for the first, second and third term on the right hand side of~\eqref{IMSondiagonal} respectively 
Proposition~\ref{MainPropDiagMinima},
Proposition~\ref{propsaddle} and Proposition~\ref{propinflection} gives then 
the following estimate: 
there exist constants $R_0, \tilde h_0, C_{\rm{min}}, C_0, \tilde C>0$
 and, for every 
$h>0, N\in \mb N$, there exist  functions $\vp^{+}_{h, N},\vp^{-}_{h, N}   
\in C_b^\infty(\mb R^N)$
 such that for all $h\in (0, \tilde h_0]$, $N\in \mb N$ and 
  $f\in C_b^\infty(\mb R^N)$ with    
  $\supp f  \subset \mc S_N(R_0)$ it holds
\begin{gather*}
 \mc E_{h,N} [f]     \geq      C_{\rm{min}}    \int_{\mb R^N}   [\zeta^{{\rm{min}}}_{N} f]^2 e^{-\tfrac{V_N}{hN}}  dx  \\
     -    \Big(\int_{\mb R^N}  f  \, \zeta^{{\rm{min}}}_{N}  \vp^{+}_{h, N}    e^{-\tfrac{V_N}{hN}}  dx \Big)^2 
     -     
      \Big( \int_{\mb R^N}  f  \, \zeta^{{\rm{min}}}_{N} \vp^{-}_{h, N}    e^{-\tfrac{V_N}{hN}}  dx  
       \Big)^2    \\
       +        C_{0}    \int_{\mb R^N}   [\zeta^{0}_{N} f]^2 e^{-\tfrac{V_N}{hN}}  dx     + 
           \tfrac{\tilde C}{h_0}    \int_{\mb R^N}   [\tilde \zeta_{N} f]^2 e^{-\tfrac{V_N}{hN}}  dx   
           - c  h  \int_{\mb R^N}   f^2  \, e^{- \frac{V_N }{hN}}  dx  . 
       \end{gather*}
Taking $C:= \tfrac 12 \min\{C_{\rm{min}}, C_0, \tilde C\}$, 
$h_0 := \min\{\tilde h_0, 1, \tfrac{C}{c}\} $ 
and      $\phi^{\pm}_{h, N} := \zeta^{{\rm{min}}}_{N}  \vp^{\pm}_{h, N}$
conlcudes the proof. 
 \end{proof}

\section{Spectral Gap Asymptotics in infinite dimension}\label{sec.Inf}

\noindent
In this section we provide the proof of Theorem~\ref{corollaryEK}. The latter is split into two parts:  The lower bound will be shown with Proposition~\ref{propLBinfty} below, and the upper Bound with Proposition~\ref{propUBinfty} below.  Both lower and upper bound are obtained by rather straightforward approximation procedures starting from the corresponding uniform result on the lattice. Thus the lower bound given in Proposition~\ref{propLBinfty}
will be essentially a corollary of Theorem~\ref{thKramers}.

\

\noindent
We start by introducing our notation. 
We fix $m>0$ and consider  for each $h>0$
the  centered Gaussian measure $\gamma_h$ 
on $L^2(\mb T)$ introduced after Theorem~\ref{thKramers}. The Fourier transform 
of $\xi\in L^2(\mb T)$ is denoted by  
$k\mapsto \hat \xi(k) : = \int_{\mb T} \xi(s) e^{- i2\pi ks} \, ds $.
For the approximation procedure it will be enough and notationally convenient to consider only odd $N$.
We shall then denote by $\mb N':=  2\mb N -1$ the
set of odd natural numbers and label generic 
elements of $\mb R^{N}$ for $N\in \mb N'$
 by $x = (x_k)_{k\in \mb Z: |k|\leq \tfrac {N-1}{2}}$. We still assume periodic boundary conditions, i.e. $x_{\frac 12 (N+1)} := x_{ - \frac 12 (N - 1)}  $. 

\

\noindent
For each $N\in \mb N'$ and $h>0$ we consider the  centered Gaussian probability measure $\gamma_{h,N}$
on $\mb R^N$ defined by
\[       \gamma_{h, N}(dx)    :=    Z^{-1}_{h, N} \exp{   \left( -  \frac m2 \sum_{k} x_k^2  -   \frac{\mu}{8\sin^2(\frac{\pi}{N})}   \   \sum_{k}  
   (x_{k} - x_{k+1})^2  \right)}       \, dx  ,       \]
   where $Z_{h, N}$ is the normalization constant. 
Note that for every $N\in \mb N', h>0$ and $k, j\in \mb Z$ with $|k|, |j| \leq \frac 12 (N-1)$
 the covariances are given by 
\begin{equation}\label{covariancegamma2} 
 \int_{\mb R^{N}}   x_k x_j   \,  \gamma_{h, N} (dx) 
  =    h \sum_{|\ell|\leq \frac 12(N-1)}   \frac{1}{m+ \nu_{\ell, N}} \, \exp{\left(- \im 2\pi \tfrac{\ell}{N} (k-j)\right)}  ,     
  \end{equation}
  where the $\nu_{\ell, N}$'s are the eigenvalues of $K_{N}$
as introduced in~\eqref{eigenvaluesK}.
In order to relate $\gamma_{h}$ and $\gamma_{h, N}$ we 
introduce for each  $N\in \mb N'$ a random vector 
$X^{(N)} = (X_k^{(N)})_{k\in \mb Z: |k|\leq \tfrac {N-1}{2}}$
 on the probability space
$(L^2(\mb T), \gamma_h)$
which has the property to be distributed according to $\gamma_{h, N}$. 
This can be done as follows~\cite{Si_EQFT}. We define for each $N\in \mb N'$ the function $\sigma := \sigma_N: \mb Z \to \mb R$ by setting 
 \begin{equation}\label{defMN}
 \sigma_N(k)  :=    \begin{cases}
  \left( \frac{m+ \mu k^2}{m + \nu_{k, N}}\right)^{\frac 12}  &     \text{if } |k|=0, \dots, \tfrac{N-1}{2} ,    \\
    0   & \text{otherwise. }
 \end{cases}
 \end{equation}
Note that $\lim_{N\to \infty}\sigma_N(k) \to 1$ for every $k\in \mb Z$ and that 
there exists a $C>0$ such that 
$|\sigma_N(k)| \leq C$ for all $k\in \mb Z$ and $N\in \mb N'$.  
Further  let $X^{(N)} = (X_k^{(N)})_{k\in \mb Z: |k|\leq \tfrac {N-1}{2}}: L^2(\mb T) \to \mb R^{N}$ be the continuous linear map  given by  
  \begin{equation}\label{defX}    X_k^{(N)} (\xi)    :=    \int_{\mb T} \xi(s)  \sum_{\ell\in \mb Z} 
  \sigma_N(\ell)  \, e^{ - \im 2\pi \ell (s-\tfrac kN)  }   \, ds
  \  \    ,    \  \  \forall \xi \in L^2(\mb T) , |k| \leq \tfrac{N-1}{2}.  
  \end{equation}
\noindent
Then $X^{(N)}$ is a centered Gaussian random vector on the probability space $(L^2(\mb T), \gamma_h)$ with covariance given by 
\[   \int_{L^2(\mb T)}   X_k^{(N)}  X_j^{(N)}  \, \gamma_h(d\xi)    =    h  \sum_{|\ell|\leq \frac 12( N-1)}    \frac{1}{m+ \nu_{\ell, N}} \, \exp{\left(- \im 2\pi \tfrac{\ell}{N} (k-j) \right)}     
  . \] 
 Thus, it follows from~\eqref{covariancegamma2}
 that $X^{(N)}$ has Law $\gamma_{h, N}$ as desired. Furthermore, we introduce in analogy to [DaPrato, 11.2], point evaluation on $L^2(\mb T)$ for all $ s\in \mb T$ and $\xi \in L^2(\mb T)$ as $\xi(s):=\delta_s(\xi)$, where $\delta_s$ is defined as the limit in $L^2(d\gamma_h)$ of
   \begin{equation}\label{defX}    \delta_{s,N} (\xi)    :=   \sum_{|\ell|\leq \frac 12(N-1)} 
  \hat \xi(\ell)e^{ \im 2\pi \ell s  }.
  \end{equation} 
  
 \
 
 \noindent
 Next, recalling the functional $U$ introduced in~\eqref{defUc}, we consider for each $N\in \mb N'$ its discrete version $U_N:\mb R^N \to \mb R$ given by 
 \begin{equation}\label{defUdiscr}     U_N(x)      :=           \sum_{k}   \left(      \frac 14 x_{k}^4  -  \frac {1}{2}(1+m) x_k^2   + \frac 14       \right)         .   
   \end{equation}

\noindent
We shall consider for each $h>0$ and $N\in \mb N'$ the perturbed probability measures $\tilde \gamma_{h}$ and $\tilde \gamma_{h, N}$, defined 
respectively on the Borel sets of $L^2(\mb T)$ and  $\mb R^N$, and given by 
 \[ \tilde \gamma_{h}(d\xi)   :=  \tilde Z_h^{-1}e^{-\tfrac{U(\xi)}{h}} \, \gamma_h(d\xi)    \  \  \   ,   \   \  \  \   
 \tilde \gamma_{h, N}(dx)   :=   \tilde Z_{h, N}^{-1}  e^{-\tfrac{U_N(x)}{hN}} \, \gamma_{h, N}(dx)     ,    \]
 where $\tilde Z_h$ and $\tilde Z_{h, N}$ are normalization constants. 
\begin{remark}  An explicit Gaussian computation shows that $\xi\mapsto \int_{\mb T} \xi^4(s) \, ds$ is in $L^1(d\gamma_h)$ for every $h>0$. This implies, by Jensen's inequality, $\tilde Z_h >0$ and thus $\tilde \gamma_{h}$ is well-defined. 
It is well-known that 
not only $\gamma_h(L^4(\mb T)) =1$, but even $\gamma_h(C^{\alpha}(\mb T)) =1$ for 
$\alpha< \tfrac 12$, where $C^{\alpha}(\mb T)$ is the space of $\alpha$-H\"older continuous functions on $\mb T$.
\end{remark}

\noindent
Now fix $F\in \mc FC_b^\infty(L^2(\mb T))$. Then by definition 
of $\mc FC_b^\infty(L^2(\mb T))$
 there exist
$n\in \mb N$, $\mbf y= (y_1, \dots, y_n)$ with $y_i\in L^2(\mb T)$ and $f\in C_b^\infty(\mb R^n)$ such that $F(\xi) =   
f( [\xi, \mbf y])$ for every $\xi\in L^2(\mb T)$, 
where we have set for short 
\[[\xi, \mbf y]    := \left(   \lp \xi,y_1 \rp_{L^2(\mb T)} , \dots, 
 \lp \xi, y_n \rp_{L^2(\mb T)}   \right).  \] 
In particular  $F\in C^1(L^2(\mb T))$ and, 
denoting by $D F:
L^2(\mb T) \to L^2(\mb T) $ the gradient of $F$, we have 
$\forall \xi \in 
L^2(\mb T)$ 
\begin{equation}\label{representationD}    \|D F(\xi)\|^2_{L^2(\mb T)}   =   \sum_{j,\ell=1}^n
\partial_{j} f([\xi, \mbf y])   \, 
 \partial_{l} f([\xi, \mbf y])  \lp y_j, y_\ell \rp_{L^2(\mb T)}    .  
 \end{equation}
In order to perform the approximation procedure it will be convienent to introduce for each $N\in \mb N'$
the function $\tilde f_N:\mb R^{N} \to \mb R$ defined 
by
\begin{equation}\label{ftilde}
\tilde f_N(x)   : =     F(\eta^{(N)} (x) )   =   f([\eta^{(N)}( x), \mbf y] ), 
\end{equation}
where  $\eta^{{N}} : \mb R^{N} \to L^2(\mb T)$ and 
$ \eta^{(N)} (x) $ is 
 is given by 
 \begin{equation}\label{defeta}
  s\mapsto  \frac 1N \sum_{|j|, |\ell|\leq \frac 12(N-1)}     x_{\ell}  \, e^{ i2\pi j(s-\tfrac \ell N)}  .  
 \end{equation}
 \noindent
Before we can prove the following approximation result, we introduce the covariance functions
\begin{itemize}
\item [] $Q(s,t):=\int_{L^2(\mb T)} \xi(s) \xi(t) \, \gamma_h(d\xi)$,
\item [] $Q^N(s,t):=\int_{L^2(\mb T)}  X_{k_s}^{(N)} (\xi) X_{k_t}^{(N)} (\xi) \, \gamma_h(d\xi)$,
\item [] $\tilde Q^N(s,t):=\int_{L^2(\mb T)} X_{k_s}^{(N)}(\xi)\xi(t) \, \gamma_h(d\xi)$,
\end{itemize}
defined for all $s,t\in \mb T$ where $k_s:=\text{int}(N\cdot s)$ is the next lattice point to $s$. Note that $Q^N$ is just the step function associated to the discrete covariance $\int_{L^2(\mb T)}  X_{k}^{(N)} (\xi) X_{\ell}^{(N)} (\xi) \, \gamma_h(d\xi)$. Using the boundedness and convergence of $\sigma_N$, a simple argument shows that $Q^N,\tilde Q^N$ are uniformly bounded and $Q=\underset{N\rightarrow \infty}{\lim}Q^N=\underset{N\rightarrow \infty}{\lim}\tilde Q^N$ pointwise. Furthermore, we recall that we can express higher momenta of Gaussian random variables via a polynomial of their covariances. Particularly, there exists a polynomial $p(x,y,z)$ such that $\int_{L^2(\mb T)}   (\frac{1}{4}\phi^4-\frac{1}{2}(m+1)\phi^2) (\frac{1}{4}\psi^4-\frac{1}{2}(m+1)\psi^2) \, \gamma_h(d\xi)$ can be expressed by 
\begin{align*}
p\left(\int_{L^2(\mb T)}   \phi\cdot \phi \, \gamma_h(d\xi),\int_{L^2(\mb T)}   \psi\cdot \psi \, \gamma_h(d\xi),\int_{L^2(\mb T)}   \phi\cdot \psi \, \gamma_h(d\xi)\right),
\end{align*}
for all Gaussian distributed random variables $\phi,\psi$.

\begin{lemma} \label{LemmaapproxU}. 
Let $U_N$, $X^{(N)}$ and $\eta^{(N)}$ be defined respectively as
in~\eqref{defUdiscr},~\eqref{defX} and~\eqref{defeta}.
 Then, in the limit $N\to \infty$,  we have
\begin{itemize}
\item[(i)]  
$\tfrac{U_{N}\circ X^{(N)}}{N}  \to    U(\xi)  $
in $L^2(d\gamma_h)$, 
\item[(ii)]    $\lp \eta^{(N)}(X^{(N)}(\xi)), y \rp_{L^2(\mb T)}   \to   \lp \xi  , y \rp_{L^2(\mb T)} $   for all $\xi, y\in L^2(\mb T)$.
\end{itemize}
\end{lemma}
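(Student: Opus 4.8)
\noindent
The plan is to establish the two items separately, in both cases reducing the convergence to dominated convergence by exploiting that all the random variables occurring below are centered and jointly Gaussian on the probability space $(L^2(\mb T),\gamma_h)$, so that every mixed moment is a fixed polynomial in the covariances $Q$, $Q^N$, $\tilde Q^N$ introduced above, together with the uniform boundedness of $\sigma_N$ and the pointwise convergence $\sigma_N(k)\to 1$.

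\noindent
\emph{Item (ii).} First I would compute $\eta^{(N)}\circ X^{(N)}$ explicitly in Fourier variables. Writing $X^{(N)}_\ell(\xi)=\sum_{|\ell'|\le (N-1)/2}\sigma_N(\ell')\,\hat\xi(\ell')\,e^{\im 2\pi\ell'\ell/N}$ and substituting this into the definition~\eqref{defeta} of $\eta^{(N)}$, the geometric-sum identity (namely, $\tfrac1N\sum_{|\ell|\le(N-1)/2}e^{\im 2\pi(\ell'-j)\ell/N}$ equals $1$ when $\ell'\equiv j\pmod N$ and vanishes otherwise), combined with the fact that $|\ell'|,|j|\le (N-1)/2$ forces $\ell'=j$, collapses the double sum and yields the clean identity
\[
\eta^{(N)}\bigl(X^{(N)}(\xi)\bigr)(s)=\sum_{|j|\le (N-1)/2}\sigma_N(j)\,\hat\xi(j)\,e^{\im 2\pi js}\,.
\]
Consequently $\lp\eta^{(N)}(X^{(N)}(\xi)),y\rp_{L^2(\mb T)}=\sum_{|j|\le(N-1)/2}\sigma_N(j)\,\hat\xi(j)\,\overline{\hat y(j)}$, while $\lp\xi,y\rp_{L^2(\mb T)}=\sum_{j\in\mb Z}\hat\xi(j)\,\overline{\hat y(j)}$ by Parseval. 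Since $\hat\xi,\hat y\in\ell^2(\mb Z)$, the series $\sum_j|\hat\xi(j)\,\hat y(j)|$ converges, $\sigma_N$ is bounded uniformly in $N$, and $\sigma_N(j)\to 1$ for each fixed $j$; dominated convergence for series then gives (ii).

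\noindent
\emph{Item (i).} Since $\tfrac1N U_N(x)=\tfrac1N\sum_k g(x_k)+\tfrac14$ with $g(u):=\tfrac14 u^4-\tfrac12(m+1)u^2$ (see~\eqref{defUdiscr}), and $U(\xi)=\int_{\mb T}g(\xi(s))\,ds+\tfrac14$, it suffices to show that $V_N:=\tfrac1N\sum_k g\bigl(X^{(N)}_k\bigr)$ converges to $V:=\int_{\mb T}g(\xi(s))\,ds$ in $L^2(d\gamma_h)$, the limit $V$ being in $L^2(d\gamma_h)$ thanks to the Gaussian moment bounds recalled above. I would expand $\int(V_N-V)^2\,d\gamma_h=\int V_N^2\,d\gamma_h-2\int V_NV\,d\gamma_h+\int V^2\,d\gamma_h$ and treat each term using Fubini, the Wick identity $\int g(\phi)g(\psi)\,d\gamma_h=p\bigl(\int\phi^2\,d\gamma_h,\int\psi^2\,d\gamma_h,\int\phi\psi\,d\gamma_h\bigr)$ for centered jointly Gaussian $\phi,\psi$, and the elementary Riemann-sum identity $\tfrac1{N^2}\sum_{k,\ell}F(k,\ell)=\iint_{\mb T^2}F(k_s,k_t)\,ds\,dt$ (recall $k_s=\mathrm{int}(Ns)$, and that by translation invariance $\int(X^{(N)}_k)^2\,d\gamma_h$ is independent of $k$). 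This rewrites $\int V_N^2\,d\gamma_h$, $\int V_NV\,d\gamma_h$ and $\int V^2\,d\gamma_h$ as the integrals over $\mb T^2$ of $p\bigl(Q^N(s,s),Q^N(t,t),Q^N(s,t)\bigr)$, $p\bigl(Q^N(s,s),Q(t,t),\tilde Q^N(s,t)\bigr)$ and $p\bigl(Q(s,s),Q(t,t),Q(s,t)\bigr)$, respectively. Since $p$ is a polynomial, $Q^N$, $\tilde Q^N$ and $Q$ are uniformly bounded, and $Q^N\to Q$, $\tilde Q^N\to Q$ pointwise, dominated convergence on the probability space $\mb T^2$ shows all three integrals converge to $\iint_{\mb T^2}p\bigl(Q(s,s),Q(t,t),Q(s,t)\bigr)\,ds\,dt$, whence $\int(V_N-V)^2\,d\gamma_h\to0$.

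\noindent
Given the machinery already assembled (the covariance functions $Q,Q^N,\tilde Q^N$, the moment polynomial $p$, and the uniform boundedness and pointwise convergence of $\sigma_N$), neither step presents a genuine analytic difficulty. The main, and rather mild, obstacle is the combinatorial bookkeeping: carefully turning the normalized discrete sums into integrals of the step functions $Q^N,\tilde Q^N$ against $p$, and verifying that the relevant families of random variables are jointly Gaussian so that the Wick formula applies.
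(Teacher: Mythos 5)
Your proof is correct and follows essentially the same route as the paper's: in (i) you expand the $L^2$ norm, express every mixed moment through the Wick polynomial $p$ in the covariances $Q$, $Q^N$, $\tilde Q^N$, pass to double integrals over $\mb T^2$ via the Riemann-sum identity, and conclude by dominated convergence; in (ii) you carry out the same Fourier computation, with the (mildly cleaner) intermediate observation that $\eta^{(N)}\circ X^{(N)}$ acts on $\xi$ as the truncated Fourier multiplier $\hat\xi(j)\mapsto \sigma_N(j)\hat\xi(j)\mathbf 1_{|j|\le (N-1)/2}$, after which dominated convergence for the series applies exactly as in the paper.
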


\begin{proof} 
For a series $\{a_k:\ |k|\leq \frac 12( N-1)\}$, we have the identity $\frac{1}{N}\sum_{|k|\leq \frac 12( N-1)} a_k=\int_{\mb T} a_{k_t}dt$, hence we obtain
\begin{align*}
&\int_{L^2(\mb T)}   \left|U(\xi)- \tfrac{U_{N}( X^{(N)} (\xi))}{N}\right|^2 \, \gamma_h(d\xi)=\\
=&\int_{L^2(\mb T)}\left|\int_{\mb T}  \frac 14  \xi^4(s)-\frac{1}{2}(m+1)\xi^2(s)-\frac{1}{4}\Big(X_{k_s}^{(N)}(\xi)\Big)^4+\frac{1}{2}(m+1)\Big(X_{k_s}^{(N)}(\xi)\Big)^2ds\right|^2  \, \gamma_h(d\xi)
\end{align*}
Rewriting this expression in terms of the covariances $Q,Q^N,\tilde Q^N$ and polynomial $p(x,y,z)$, yields
\begin{align*}
\int_{L^2(\mb T)}\int_{L^2(\mb T)}&\Big(p\big(Q(s,s),Q(t,t),Q(s,t)\big)-2p\big(Q(s,s),Q^N(t,t),\tilde Q^N(s,t)\big)\\
&+p\big(Q^N(s,s),Q^N(t,t),Q^N(s,t)\big)\Big)ds \ dt,
\end{align*}
where we have expanded the square and changed the order of integration. Because of $Q=\underset{N\rightarrow \infty}{\lim}Q^N=\underset{N\rightarrow \infty}{\lim}\tilde Q^N$, statement (i) follows by dominated convergence.
 In order to prove (ii) note that
\begin{gather*}
\lp \eta^{(N)}(X^{(N)}(\xi)), y \rp_{L^2(\mb T)}    =     
\frac 1N \int_{\mb T}
\sum_{|j|, |\ell|\leq \frac 12(N-1)} X_{\ell}^{(N)}(\xi) \,  e^{\im 2\pi j(s- \frac \ell N))}   y(s) \, ds     =   \\
\frac 1N \sum_{|j|, |\ell|, |p|\leq \frac 12(N-1)}  e^{-\im 2\pi \frac \ell N(j+p)}  
\sigma_N(p) \hat \xi(-p) \hat y(-j)     = \\
 \sum_{|j|\leq \frac 12(N-1)}   \sigma_N(j) \hat \xi(j) \hat y(-j)  
\xrightarrow{N\to \infty} 
 \sum_{j\in \mb Z}    \hat \xi(j) \hat y(-j)  
   =       \lp  \xi, y \rp_{L^2(\mb T) }  .    
 \end{gather*}

\end{proof}

\noindent
The next lemma provides for fixed $h>0$ the crucial approximation properties for the objects appearing in the functional inequality which defines the spectral gap $\lambda_h^{(1)}$. 
\begin{lemma}\label{PropapproxDir} Let $F\in \mc FC_b^\infty(L^2(\mb T))$
and consider for each $N\in \mb N'$ the corresponding $\tilde f_N \in C_b^\infty( \mb R^{N})$ defined in~\eqref{ftilde}. Then 
for each $h>0$
\begin{equation}\label{approxDirForm}
\lim_{N\to \infty}
\left| \int_{L^2(\mb T)}    \|D F(\xi)\|^2_{L^2(\mb T)} \, 
 \tilde\gamma_h(d\xi)    -      
N \int_{\mb R^{N}}   |\nabla \tilde f_N (x)|^2 \, 
\tilde\gamma_{h, N}(dx)  \right|     =   0     .
\end{equation}
Moreover, for each $h>0$, we have 
 \begin{itemize}
 \item[]$\lim_{N\to \infty}\left|\int_{L^2(\mb T)}    |F(\xi)|^2 \, 
 \tilde\gamma_h(d\xi)- \int_{\mb R^{N}}   |\tilde f_N (x)|^2 \, 
\tilde\gamma_{h, N}(dx)\right|=0$,
\item[]$\lim_{N\to \infty}\left|\int_{L^2(\mb T)}    F(\xi) \, 
 \tilde\gamma_h(d\xi)- \int_{\mb R^{N}}   \tilde f_N (x) \, 
\tilde\gamma_{h, N}(dx)\right|=0$.
 \end{itemize}
\end{lemma}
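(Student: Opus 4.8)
The plan is to pull all the $\mb R^{N}$-integrals back to the fixed probability space $(L^2(\mb T),\gamma_h)$ via the Gaussian map $X^{(N)}$ and then invoke Lemma~\ref{LemmaapproxU} together with dominated convergence. Since $X^{(N)}$ has law $\gamma_{h,N}$, for any bounded measurable $G:\mb R^{N}\to\mb R$ one has $\int_{\mb R^{N}}G\,d\gamma_{h,N}=\int_{L^2(\mb T)}G(X^{(N)}(\xi))\,\gamma_h(d\xi)$, hence
\[\int_{\mb R^{N}}G\,d\tilde\gamma_{h,N}=\tilde Z_{h,N}^{-1}\int_{L^2(\mb T)}G(X^{(N)}(\xi))\,e^{-\frac{U_N(X^{(N)}(\xi))}{hN}}\,\gamma_h(d\xi),\qquad \tilde Z_{h,N}=\int_{L^2(\mb T)}e^{-\frac{U_N(X^{(N)}(\xi))}{hN}}\,\gamma_h(d\xi).\]
The elementary bound $\tfrac14 t^4-\tfrac12(1+m)t^2+\tfrac14\geq-\tfrac14 m(m+2)$ gives $U_N\geq-\tfrac N4 m(m+2)$ and $U\geq-\tfrac14 m(m+2)$, so $e^{-U_N/(hN)}$ and $e^{-U/h}$ are bounded by the constant $e^{m(m+2)/(4h)}$ uniformly in $N$. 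Combining this uniform bound with Lemma~\ref{LemmaapproxU}(i) (which gives $L^2(d\gamma_h)$-convergence of $U_N\circ X^{(N)}/N$ to $U$, hence a.e.-convergence of the exponential weights along subsequences) and dominated convergence, one gets $\tilde Z_{h,N}\to\tilde Z_h:=\int_{L^2(\mb T)}e^{-U/h}\gamma_h(d\xi)$, which is strictly positive (as observed above).

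For the two scalar limits one uses Lemma~\ref{LemmaapproxU}(ii). Writing $F(\xi)=f([\xi,\mbf y])$, one has by~\eqref{ftilde} that $\tilde f_N(X^{(N)}(\xi))=f([\eta^{(N)}(X^{(N)}(\xi)),\mbf y])$, and part (ii) together with continuity of $f$ shows $\tilde f_N(X^{(N)}(\xi))\to f([\xi,\mbf y])=F(\xi)$ for every $\xi$. Since $f$ is bounded, the integrand $|\tilde f_N(X^{(N)}(\xi))|^2\,e^{-U_N(X^{(N)}(\xi))/(hN)}$ is dominated by $\|f\|_\infty^2\,e^{m(m+2)/(4h)}$, so dominated convergence yields $\int_{\mb R^{N}}|\tilde f_N|^2 e^{-U_N/(hN)}\,d\gamma_{h,N}\to\int_{L^2(\mb T)}|F|^2 e^{-U/h}\,\gamma_h(d\xi)$; dividing by $\tilde Z_{h,N}\to\tilde Z_h$ gives the second limit, and the same argument with $F$ in place of $|F|^2$ gives the third.

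For~\eqref{approxDirForm}, write $\tilde f_N(x)=f(g_1^{(N)}(x),\dots,g_n^{(N)}(x))$ with $g_i^{(N)}(x):=\lp\eta^{(N)}(x),y_i\rp_{L^2(\mb T)}$, which by~\eqref{defeta} is linear in $x$; hence each $\nabla g_i^{(N)}$ is a constant vector and, by the chain rule, $N|\nabla\tilde f_N(x)|^2=\sum_{i,j=1}^{n}\partial_i f([\eta^{(N)}(x),\mbf y])\,\partial_j f([\eta^{(N)}(x),\mbf y])\,N\lp\nabla g_i^{(N)},\nabla g_j^{(N)}\rp_{\mb R^{N}}$. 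A direct computation with the explicit form~\eqref{defeta} of $\eta^{(N)}$, using that $\sum_{|\ell|\leq(N-1)/2}e^{i2\pi(a-b)\ell/N}$ equals $N$ when $a=b$ and $0$ otherwise (for $|a|,|b|\leq(N-1)/2$), yields
\[N\lp\nabla g_i^{(N)},\nabla g_j^{(N)}\rp_{\mb R^{N}}=\sum_{|a|\leq(N-1)/2}\hat y_i(a)\,\hat y_j(-a)\ \xrightarrow[N\to\infty]{}\ \sum_{a\in\mb Z}\hat y_i(a)\,\hat y_j(-a)=\lp y_i,y_j\rp_{L^2(\mb T)}\]
by Parseval, the left-hand side being bounded by $\|y_i\|_{L^2}\|y_j\|_{L^2}$ uniformly in $N$. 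Pulling back by $X^{(N)}$, using Lemma~\ref{LemmaapproxU}(ii) and continuity of $\partial_i f$ for the convergence $\partial_i f([\eta^{(N)}(X^{(N)}(\xi)),\mbf y])\to\partial_i f([\xi,\mbf y])$, and the uniform bound on $e^{-U_N(X^{(N)})/(hN)}$, dominated convergence gives that $N\int_{\mb R^{N}}|\nabla\tilde f_N|^2 e^{-U_N/(hN)}\,d\gamma_{h,N}$ converges to $\int_{L^2(\mb T)}\big(\sum_{i,j}\partial_i f([\xi,\mbf y])\,\partial_j f([\xi,\mbf y])\,\lp y_i,y_j\rp_{L^2(\mb T)}\big)e^{-U/h}\,\gamma_h(d\xi)$, which by~\eqref{representationD} equals $\int_{L^2(\mb T)}\|DF(\xi)\|_{L^2(\mb T)}^2 e^{-U/h}\,\gamma_h(d\xi)$; dividing by $\tilde Z_{h,N}\to\tilde Z_h$ yields~\eqref{approxDirForm}.

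The main obstacle is the purely deterministic discrete-to-continuum matching $N\lp\nabla g_i^{(N)},\nabla g_j^{(N)}\rp_{\mb R^{N}}\to\lp y_i,y_j\rp_{L^2(\mb T)}$: this is exactly the point where the factor $N$ in the definition of $\mc E_{h,N}$ is the correct normalization matching the continuum Dirichlet form, and where the precise definition~\eqref{defeta} of $\eta^{(N)}$ matters; everything else reduces to dominated convergence with the uniform exponential bound. A minor technical point is that Lemma~\ref{LemmaapproxU}(i) only provides $L^2(d\gamma_h)$-convergence of the potentials, so the convergence of the weights $e^{-U_N\circ X^{(N)}/(hN)}$ must be routed through a.e.-convergence along subsequences and the standard criterion that a sequence converges once every subsequence has a further subsequence converging to a common limit.
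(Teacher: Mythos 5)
Your proof is correct and follows essentially the same route as the paper's: pull all integrals back to $(L^2(\mb T),\gamma_h)$ via $X^{(N)}$, invoke Lemma~\ref{LemmaapproxU}(i)--(ii) for the convergence of $U_N\circ X^{(N)}/N$ and of the cylindrical coordinates, compute $N|\nabla\tilde f_N|^2$ explicitly via the discrete Fourier orthogonality, and conclude with dominated convergence. The only difference is organizational — you establish $\tilde Z_{h,N}\to\tilde Z_h$ up front using the uniform lower bound $U_N\geq-\tfrac N4 m(m+2)$ and the subsequence criterion, rather than splitting the error into the paper's $\mc R_1(N)$ and $\mc R_2(N)$ — and this actually makes the domination step more explicit than in the paper's rather terse treatment.
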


\begin{proof} We shall prove only~\eqref{approxDirForm}, 
since the statements (i) and (ii) are proven similarly, with even some simplifications. 
Fix $h>0$ and $F\in \mc FC_b^\infty(L^2(\mb T))$. Then 
\begin{gather*}
 \int_{L^2(\mb T)}    \|D F(\xi)\|^2_{L^2(\mb T)} \, 
 \tilde\gamma_h(d\xi)    -      
N \int_{\mb R^{N}}   |\nabla \tilde f_N (x)|^2 \, 
\tilde\gamma_{h, N}(dx) 
    =     \\
     \mc R_1(N)      + \mc R_2(N), 
\end{gather*}
where
\[\mc R_1(N)   : =   \int_{L^2(\mb T)}   \left[ \tilde Z_h^{-1}\|D F(\xi)\|^2_{L^2(\mb T)} 
- \tilde Z_{h,N}^{-1} N |\nabla \tilde f_N (X^{(N)} (\xi))|^2
\right] e^{-\tfrac{U(\xi)}{h}}\, \gamma_h(d\xi)     ,   \]
\[ \mc R_2(N) :=  \tilde Z_{h,N}^{-1}\int_{L^2(\mb T)}    N |\nabla \tilde f_N (X^{(N)} (\xi))|^2 \, \left[e^{-\tfrac{U(\xi)}{h}}  -   e^{- \tfrac{U_{N}( X^{(N)} (\xi))}{hN}}  \right] \, \gamma_h(d\xi)   . \]
In order to prove 
the convergence of the first term, we shall show that 
\begin{equation}    \label{pointwiseconvtildef}
\lim_{N \to \infty} N |\nabla \tilde f_N (X^{(N)} (\xi))|^2
   =   \|D F(\xi)\|^2_{L^2(\mb T)}       \   \  \    \  \  \  
   \text{ for all }   \xi \in L^2(\mb T)          .
\end{equation}
The convergence $\mc R_1(N) \to 0$ will then follow by dominated convergence. 
 To prove~\eqref{pointwiseconvtildef} observe that
 for every $x\in \mb R^{N}$ we have
 \begin{gather*}
 | \nabla\tilde f_N(x)|^2    =  \\
 \sum_{j, \ell=1}^n \partial_j  
 f ([\eta^{(N)}( x), \mbf y] )   
 \, \partial_\ell f ([\eta^{(N)}( x), \mbf y] )  \, \frac {1}{N^2} \sum_{|k|\leq \frac 12(N-1)} \tilde y_j(k)
 \tilde y_\ell(k)   ,
\end{gather*}
where for generic $y\in L^2(\mb T)$, every $N\in \mb N'$ and $k\in \mb Z:|k| \leq \frac 12(N-1)$ we define 
 $\tilde y (k)= \tilde y^{(N)} (k): =\sum_{|p|\leq \frac 12(N-1)}
 \int_{\mb T} y(s) e^{i2\pi p(s- \frac kN)} \, ds $. 
It follows that for every $\xi\in L^2(\mb T)$ we have
\begin{gather*} N |\nabla \tilde f_N (X^{(N)} (\xi))|^2   =  \\
 \sum_{j, \ell=1}^n \partial_j  
 f ([\eta^{(N)}( X^{(N)} (\xi)), \mbf y] )   
 \, \partial_\ell f ([\eta^{(N)}( X^{(N)} (\xi)), \mbf y] )  
 \frac 1N\sum_{|k|\leq \frac 12(N-1)} \tilde y_j(k)
 \tilde y_\ell(k)   .     
 \end{gather*}
 Statement~\eqref{pointwiseconvtildef}  follows then from the convergence result of Lemma~\ref{LemmaapproxU} (ii),
 the representation of $ \|D F(\xi)\|^2_{L^2(\mb T)} $ given in~\eqref{representationD}
and the following computation: 
\begin{gather*}
\frac 1N\sum_{|k|\leq N-1} \tilde y_j(k)
 \tilde y_\ell(k)  =  \frac 1N\sum_{|k|\leq \frac 12(N-1)}
 e^{i2\pi \frac kN(q+p)}
 \sum_{|p|,|q|\leq \frac 12(N-1)} 
  \hat y_j(-p) 
    \hat  y_{\ell}(-q)    = 
       \\
       \sum_{|p|\leq \frac 12(N-1)} 
  \hat y_j (p) 
    \hat y_{\ell}(-p)    \xrightarrow{N\to \infty}     \sum_{p\in \mb Z} 
  \hat y_j (p) 
    \hat y_{\ell}(-p)     =    \int_{\mb T}    y_j(s) y_k(s) \, ds.  
\end{gather*}
Note that $N |\nabla \tilde f_N (X^{(N)} (\xi))|^2$ is uniformly bounded for $F\in \mc FC_b^\infty(L^2(\mb T))$. Regarding the convergence of the second term $\mc R_2(N)$, we use this fact together with Lemma~\ref{LemmaapproxU} (i) to obtain $\tilde Z_{h,N}\rightarrow \tilde Z_{h}$ as well as $\mc R_2(N) \to 0$.
 \end{proof}

\noindent
We can now prove the lower bound in the statement of Theorem~\ref{corollaryEK}.
\begin{proposition}[Asymptotic lower bound on $\lambda_h^{(1)}$] \label{propLBinfty}
There exist $h_0, C>0$ such that
for every $h\in (0, h_0]$ and every $F\in \mc FC_b^\infty(L^2(\mb T))$ satisfying 
$\int_{L^2(\mb T)}    |F(\xi)|^2 \, 
 \tilde\gamma_h(d\xi) = 1$ and 
 $\int_{L^2(\mb T)}    F(\xi) \, 
 \tilde\gamma_h(d\xi) = 0 $, 
 we have 
 \begin{equation*}
 h \int_{L^2(\mb T)}    \|D F(\xi)\|^2_{L^2(\mb T)} \, 
 \tilde\gamma_h(d\xi) 
  \geq 
    \tfrac{ \sinh (\pi \sqrt{2\mu^{-1}})}{ \pi  \sin (\pi \sqrt{\mu^{-1}})}  \    
e^{-\frac{1}{4h}}
\  \big(    1   - C h      \big)          .
\end{equation*} 
\end{proposition}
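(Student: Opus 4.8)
The plan is to derive this infinite-dimensional lower bound from the uniform finite-dimensional Kramers Law (Theorem~\ref{thKramers}), from the convergence~\eqref{prefactorconv} of the prefactor $p(N)$, and from the approximation results of Lemmata~\ref{LemmaapproxU}--\ref{PropapproxDir}, which transfer the relevant functionals from $L^2(\mb T)$ to the lattice $\mb R^{N}$. Fix $h\in(0,h_0]$, with $h_0$ the constant of Theorem~\ref{thKramers}, and an admissible $F\in\mc FC_b^\infty(L^2(\mb T))$, i.e.\ one with $\int_{L^2(\mb T)}|F|^2\,\tilde\gamma_h(d\xi)=1$ and $\int_{L^2(\mb T)}F\,\tilde\gamma_h(d\xi)=0$. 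For every $N\in\mb N'$ let $\tilde f_N\in C_b^\infty(\mb R^{N})$ be the discretisation~\eqref{ftilde} of $F$.

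First I would observe that $\tilde\gamma_{h,N}$ is, up to its normalisation constant, precisely the weight $e^{-V_N/hN}\,dx$ entering $\mc E_{h,N}$: this follows from the splitting $V_N=U_N+(V_N-U_N)$, from the fact that $V_N-U_N=\tfrac m2\sum_k x_k^2+\tfrac{\mu}{8\sin^2(\frac{\pi}{N})}\sum_k(x_k-x_{k+1})^2$ is exactly the quadratic form governing the Gaussian $\gamma_{h,N}$ (cf.\ the covariances in~\eqref{covariancegamma2}), and from the definition $\tilde\gamma_{h,N}\propto e^{-U_N/hN}\,\gamma_{h,N}$. In particular $hN\int_{\mb R^N}|\nabla\tilde f_N|^2\,\tilde\gamma_{h,N}(dx)$ equals $\mc E_{h,N}[\tilde f_N]$ divided by the total mass of $e^{-V_N/hN}\,dx$, and analogously for the $L^2$-norm.

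Next, since $\tilde f_N$ need not be orthogonal to the constants, I would subtract $c_N:=\int_{\mb R^N}\tilde f_N\,\tilde\gamma_{h,N}(dx)$. Then $\tilde f_N-c_N\in C_b^\infty(\mb R^N)$ is orthogonal in $L^2(e^{-V_N/hN}\,dx)$ to the constant function, which spans the eigenspace of $\lambda^{(0)}_{h,N}=0$; testing the variational definition~\eqref{minmaxdef} of $\lambda^{(1)}_{h,N}$ with the two-dimensional subspace spanned by the constants and $\tilde f_N-c_N$ (which is genuinely two-dimensional for $N$ large, as $\int(\tilde f_N-c_N)^2\,\tilde\gamma_{h,N}(dx)\to1$) yields $\mc E_{h,N}[\tilde f_N-c_N]\geq\lambda^{(1)}_{h,N}\int_{\mb R^N}(\tilde f_N-c_N)^2e^{-V_N/hN}\,dx$. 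Using $\nabla(\tilde f_N-c_N)=\nabla\tilde f_N$ together with $\int(\tilde f_N-c_N)^2\,\tilde\gamma_{h,N}(dx)=\int\tilde f_N^2\,\tilde\gamma_{h,N}(dx)-c_N^2$, dividing by the total mass of $e^{-V_N/hN}\,dx$, and multiplying by $h$, this becomes
\[
 hN\int_{\mb R^N}|\nabla\tilde f_N|^2\,\tilde\gamma_{h,N}(dx)\ \geq\ \lambda^{(1)}_{h,N}\Big(\int_{\mb R^N}\tilde f_N^2\,\tilde\gamma_{h,N}(dx)-c_N^2\Big).
\]

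Finally I would pass to the limit $N\to\infty$. By Lemma~\ref{PropapproxDir} the left-hand side converges to $h\int_{L^2(\mb T)}\|DF\|_{L^2(\mb T)}^2\,\tilde\gamma_h(d\xi)$, while $\int_{\mb R^N}\tilde f_N^2\,\tilde\gamma_{h,N}(dx)\to\int_{L^2(\mb T)}|F|^2\,\tilde\gamma_h(d\xi)=1$ and $c_N\to\int_{L^2(\mb T)}F\,\tilde\gamma_h(d\xi)=0$; moreover Theorem~\ref{thKramers} together with~\eqref{prefactorconv} gives $\liminf_{N\to\infty}\lambda^{(1)}_{h,N}\geq\tfrac{\sinh(\pi\sqrt{2\mu^{-1}})}{\pi\sin(\pi\sqrt{\mu^{-1}})}\,e^{-\frac{1}{4h}}(1-Ch)$, with $C$ the constant of Theorem~\ref{thKramers}. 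Taking the $\liminf$ of both sides of the last display then produces exactly the asserted inequality, with the same $h_0$ and $C$. The only delicate points are of a bookkeeping nature — that $\tilde f_N$ lies in $C_b^\infty(\mb R^N)$, and that the orthogonalisation defect $c_N$ and the normalisation defect $1-\int\tilde f_N^2\,\tilde\gamma_{h,N}(dx)$ both vanish as $N\to\infty$ — and these are precisely what Lemma~\ref{PropapproxDir} provides; the substantive analytic work is already carried by that lemma and by Theorem~\ref{thKramers}, so no further obstacle remains.
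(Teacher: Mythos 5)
Your argument is correct and follows the same route as the paper: reduce to the lattice via Lemma~\ref{PropapproxDir}, apply the uniform Kramers law of Theorem~\ref{thKramers}, and pass to the limit using~\eqref{prefactorconv}. The only difference is that you make explicit the mean-subtraction step (replacing $\tilde f_N$ by $\tilde f_N-c_N$ so that the min--max characterisation of $\lambda^{(1)}_{h,N}$ applies cleanly), which the paper absorbs implicitly into its $o(1)$ bookkeeping; this is a welcome clarification rather than a deviation.
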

\begin{proof} 
Take $h_0>0$ as in the statement of Proposition~\ref{thKramers}. Fix $h\in (0, h_0]$ and $F\in \mc FC_b^\infty(L^2(\mb T))$ such that  
$\int_{L^2(\mb T)}    |F(\xi)|^2 \, 
 \tilde\gamma_h(d\xi) = 1$ and 
 $\int_{L^2(\mb T)}    F(\xi) \, 
 \tilde\gamma_h(d\xi) = 0 $. It follows then 
 from Lemma~\ref{PropapproxDir} that 
  \begin{gather}
 h \int_{L^2(\mb T)}    \|D F(\xi)\|^2_{L^2(\mb T)} \, 
 \tilde\gamma_h(d\xi)   =   h N \int_{\mb R^{N}}   |\nabla \tilde f (x)|^2 \, 
\tilde\gamma_{h, N}(dx)    +    o(1)   \geq  \label{approxdir}  \\
 \lambda^{(1)}_{h, N}  \left( 1 +   o(1)   \right)     +    o(1) ,
 \nonumber 
 \end{gather}
 where $o(1)$ denotes a possibly on $h$ dependent sequence which vanishes in the limit $N\to \infty$. 
  Proposition~\ref{thKramers} implies that 
 there exists $C>0$ such that for all $N\in \mb N'$
 \[     \lambda^{(1)}_{h, N}   \geq  p(N)   \    
e^{-\frac{1}{4h}}
\  \big(    1   - C h      \big)   , \]
which together with~\eqref{approxdir} gives  
\begin{equation}\label{lbdirichletN}  h \int_{L^2(\mb T)}    \|D F(\xi)\|^2_{L^2(\mb T)} \, 
 \tilde\gamma_h(d\xi)     \geq 
 p(N)   \    
e^{-\frac{1}{4h}}
\  \big(    1   - C h      \big)     \left( 1 +   o(1)   \right)     +    o(1)   .  
\end{equation}
Passing to the limit $N\to \infty$ on the right hand side  of~\eqref{lbdirichletN} and recalling~\eqref{prefactorconv} finishes the proof. 
\end{proof}
\noindent
The proof of Theorem~\ref{corollaryEK} is completed by the following proposition. 

\begin{proposition}[Asymptotic upper bound on $\lambda_h^{(1)}$]\label{propUBinfty}
There exist $h_0, C>0$ such that
for every $h\in (0, h_0]$ we have
\begin{equation}\label{UBSGinftydim}
 \lambda_h^{(1)} 
  \leq 
    \tfrac{ \sinh (\pi \sqrt{2\mu^{-1}})}{ \pi  \sin (\pi \sqrt{\mu^{-1}})}  \    
e^{-\frac{1}{4h}}
\  \big(    1   + C h      \big)          .
\end{equation} \end{proposition}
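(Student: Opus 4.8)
\emph{Strategy.} The plan is to transplant to $L^2(\mb T)$ the explicit comparison functions used in~\cite{DGLP} for the lattice upper bound, and then to pass to the infinite-dimensional limit by means of Lemma~\ref{PropapproxDir}. Two elementary observations will be used. (i) Since the Rayleigh quotient is invariant under scaling of $F$, the variational definition of $\lambda_h^{(1)}$ gives, for every $F\in\mc FC_b^\infty(L^2(\mb T))$ with $\int_{L^2(\mb T)} F\,e^{-U/h}\,d\gamma_h=0$ and $\int_{L^2(\mb T)}F^2\,e^{-U/h}\,d\gamma_h>0$, the bound $\lambda_h^{(1)}\le h\big(\int\|DF\|_{L^2(\mb T)}^2\,d\tilde\gamma_h\big)\big/\big(\int F^2\,d\tilde\gamma_h\big)$, the normalisation $\tilde Z_h$ cancelling. (ii) With $U_N$ as in~\eqref{defUdiscr} one checks $U_N(x)+\tfrac12\lp x,(K_N+m)x\rp=V_N(x)$, so $\tilde\gamma_{h,N}(dx)$ is proportional to $e^{-V_N/hN}\,dx$; hence for every $g\in C_b^\infty(\mb R^N)$ the ratio $hN\big(\int|\nabla g|^2\,d\tilde\gamma_{h,N}\big)\big/\big(\int g^2\,d\tilde\gamma_{h,N}\big)$ equals $\mc E_{h,N}[g]\big/\int_{\mb R^N} g^2\,e^{-V_N/hN}\,dx$.

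\emph{The lattice comparison function.} From the upper-bound part of the proof of~\cite[Theorem~1.2]{DGLP} I would recall that there is a single smooth odd function $\psi_h\colon\mb R\to[-1,1]$, \emph{independent of $N$}, equal to $\pm1$ outside a small neighbourhood of the origin and with $\psi_h'$ a suitably normalised sharply peaked bump at $0$, such that $\phi_{h,N}(x):=\psi_h(\overline x)$ satisfies $\int_{\mb R^N}\phi_{h,N}\,e^{-V_N/hN}\,dx=0$ (oddness of $\psi_h$, evenness of $V_N$) and
\[
 \frac{\mc E_{h,N}[\phi_{h,N}]}{\int_{\mb R^N}\phi_{h,N}^2\,e^{-V_N/hN}\,dx}\ \le\ p(N)\,e^{-\frac1{4h}}\,(1+Ch)\qquad\forall N\in\mb N',\ h\le h_0,
\]
with $C,h_0>0$ independent of $N$ and $p(N)$ the prefactor~\eqref{prefactor}. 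This is the classical Eyring--Kramers estimate: $\mc E_{h,N}[\psi_h(\overline\cdot)]=h\int_{\mb R^N}\psi_h'(\overline x)^2\,e^{-V_N/hN}\,dx$ concentrates on the hyperplane $\{\overline x=0\}$ through the saddle, contributing $e^{-1/4h}$ and the transverse fluctuation determinant, whereas the $L^2$-mass concentrates near $I_\pm$; the ratio of determinants produces $p(N)$ and a near-optimal $\psi_h$ gives the factor $1+Ch$.

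\emph{Transplantation and limit.} I would then set $F_h(\xi):=\psi_h(\lp\xi,\mbf 1\rp_{L^2(\mb T)})=\psi_h(\overline\xi)$, with $\mbf 1$ the constant function $\equiv1$ on $\mb T$; this is cylindrical, $F_h\in\mc FC_b^\infty(L^2(\mb T))$, it satisfies $\int F_h\,e^{-U/h}\,d\gamma_h=0$ because $U$ is even and $\psi_h$ odd, and $\|DF_h(\xi)\|_{L^2(\mb T)}^2=\psi_h'(\overline\xi)^2$. The crucial point is that the truncation $\tilde f_N:=F_h\circ\eta^{(N)}$ of~\eqref{ftilde} is \emph{exactly} $\phi_{h,N}$: reading off the zeroth Fourier coefficient in~\eqref{defeta} gives $\lp\eta^{(N)}(x),\mbf 1\rp_{L^2(\mb T)}=\overline x$, hence $\tilde f_N(x)=\psi_h(\overline x)$. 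Applying Lemma~\ref{PropapproxDir} to $F_h$ (with $h$ fixed and $N\to\infty$) together with (ii) then yields
\[
 \frac{h\int_{L^2(\mb T)}\|DF_h\|_{L^2(\mb T)}^2\,d\tilde\gamma_h}{\int_{L^2(\mb T)}F_h^2\,d\tilde\gamma_h}\ =\ \lim_{N\to\infty}\frac{\mc E_{h,N}[\phi_{h,N}]}{\int_{\mb R^N}\phi_{h,N}^2\,e^{-V_N/hN}\,dx}\ \le\ \lim_{N\to\infty}p(N)\,e^{-\frac1{4h}}(1+Ch),
\]
and since $p(N)\to\tfrac{\sinh(\pi\sqrt{2\mu^{-1}})}{\pi\sin(\pi\sqrt{\mu^{-1}})}$ by~\eqref{prefactorconv}, combining with (i) gives~\eqref{UBSGinftydim} with the same $C,h_0$.

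\emph{Main obstacle.} The whole argument rests on disposing of the lattice upper bound in the $N$-independent, ``transplantable'' form $\psi_h(\overline x)$ with a Rayleigh quotient controlled by $p(N)e^{-1/4h}(1+Ch)$ uniformly in $N$ --- i.e.\ on the upper-bound analysis of~\cite{DGLP}. In contrast to the lower bound (Theorem~\ref{thMAIN}) this requires no rough spectral separation; the one genuinely delicate ingredient is the $N$-uniformity of the Laplace asymptotics for the high-dimensional integrals over $\{\overline x=0\}$ and around $I_\pm$, which is governed by the uniform trace-class property recorded in Lemma~\ref{remarkseries}. A minor point is to verify that Lemma~\ref{PropapproxDir} applies to the $h$-dependent cylindrical function $F_h$, which it does, since $h$ is held fixed while $N\to\infty$ and the proof of that lemma only uses dominated convergence.
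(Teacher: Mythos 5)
Your proposal is correct and follows essentially the same route as the paper: lift the DGLP one-dimensional comparison function of $\overline x$ to the cylindrical function $\psi_h(\overline\xi)$ on $L^2(\mb T)$, observe that its truncation via $\eta^{(N)}$ recovers exactly the DGLP lattice test function, invoke the DGLP Laplace asymptotics for the $N$-uniform Rayleigh quotient bound, and pass to the limit with Lemma~\ref{PropapproxDir} and the prefactor convergence~\eqref{prefactorconv}. The only cosmetic discrepancy is your description of $\psi_h$ as being identically $\pm1$ outside a small neighbourhood of the origin --- the function actually used in~\cite{DGLP} (and in the paper, namely $\chi_{h,0}(x)=\tfrac{2}{\sqrt{2\pi h}}\int_0^x e^{-s^2/2h}\,ds$) is a scaled error function that only tends to $\pm1$ asymptotically, which is convenient because its derivative is exactly Gaussian; but this does not affect the argument.
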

\begin{proof}
We consider for each $h>0$ and 
$N\in \mb N'$ the $\mc FC_b^\infty(L^2(\mb T))$ function $\chi_h(\xi):=\chi_{h,0}( \int_{\mb T} \xi(s)   \, ds)$, where we define
\begin{gather*}
\chi_{h,0}(x):=\frac{2}{\sqrt{2\pi h}}\int^x_0 e^{-\frac{s^2}{2h}}   \, ds.
\end{gather*}
Because of symmetry, we obtain $\int_{L^2(\mb T)} \chi_h(\xi)   \, \tilde\gamma_h(d\xi)=0$, and therefore the proof is complete, if we can show
\begin{gather*}
h \int_{L^2(\mb T)}    \|D \chi_h(\xi)\|^2_{L^2(\mb T)} \, \tilde\gamma_h(d\xi)\leq \tfrac{ \sinh (\pi \sqrt{2\mu^{-1}})}{ \pi  \sin (\pi \sqrt{\mu^{-1}})}  \    
e^{-\frac{1}{4h}}
\  \big(    1   + C h      \big)
\int_{L^2(\mb T)}    |\chi_h(\xi)|^2 \, \tilde\gamma_h(d\xi).
\end{gather*}
A straightforward computation yields $\widetilde{(\chi_h)}_N:=\chi_h(\eta^{(N)} (x) )=\chi_{h,0}(\overline x)$, which are exactly the approximate eigenfunctions $\chi_{h,N}$ introduced in Definition 3.10 in~\cite{DGLP}. Moreover it was shown in~\cite[Lemma 3.12 and Prop. 3.13]{DGLP} by computing the relevant Laplace asymptotics that for all $N\in \mb N'$ and $h\in (0, 1]$ 
\begin{gather*}
h N\int_{\mb R^{N}}   |\nabla \chi_{h,N }(x)|^2 \, 
\tilde\gamma_{h, N}(dx)    \leq 
   p(N)   \    
e^{-\frac{1}{4h}}
\  \big(    1   + C h      \big)\int_{\mb R^{N}}   |\chi_{h,N }(x)|^2 \, 
\tilde\gamma_{h, N}(dx).
\end{gather*}
Together with ~\eqref{prefactorconv} and Lemma~\ref{PropapproxDir}, this gives us the desired statement.   
\end{proof}

\noindent{\bf Acknowledgements:}   GDG gratefully acknowledges the financial support of  HIM Bonn in the framework of the 2019 Junior Trimester Programs ``Kinetic Theory'' and ``Randomness, PDEs and Nonlinear Fluctuations'' and the hospitality at the University of Rome La Sapienza during his frequent visits.

\end{document}